\documentclass[11pt,a4paper,twoside]{amsart}
\usepackage{amsmath}
\usepackage{amssymb}
\usepackage{amsthm}
\usepackage{geometry}
\usepackage{longtable}
\usepackage{array}
\usepackage{booktabs}
\usepackage{todonotes}
\usepackage{esint}
\usepackage{amssymb, url, color, pb-diagram, graphicx, amscd, pb-diagram,mathrsfs,amsfonts}
\usepackage[colorlinks=true, bookmarks=true, pdfstartview=FitH, pagebackref=true]{hyperref}
\usepackage{color}

\allowdisplaybreaks[3]

\newtheorem{theorem}{Theorem} [section]
\newtheorem{lemma}[theorem]{Lemma} 
\newtheorem{proposition}[theorem]{Proposition} 
\newtheorem{corollary}[theorem]{Corollary}

\newtheorem{remark}[theorem]{Remark}

\newtheorem{sketch proof}{Sketch of the proof}[section]

\let\ssection=\section\renewcommand{\section}{\setcounter{equation}{0}\ssection}

\newcommand{\dd}{\mathop{}\!\mathrm{d}}
\usepackage{cleveref}
\usepackage{etoolbox}
\crefname{theorem}{Theorem}{Theorems}

\crefname{lemma}{Lemma}{Lemmas}
\AtBeginEnvironment{lemma}{\crefalias{theorem}{lemma}}

\crefname{proposition}{Proposition}{Propositions}
\AtBeginEnvironment{proposition}{\crefalias{theorem}{proposition}}

\crefname{corollary}{Corollary}{Corollaries}
\AtBeginEnvironment{corollary}{\crefalias{theorem}{corollary}}  

\crefname{remark}{Remark}{Remarks}
\AtBeginEnvironment{remark}{\crefalias{theorem}{remark}}

\crefname{equation}{}{}
\crefname{section}{Section}{Section}

\begin{document}
	
	\title [Classification of solutions to Liouville equations and applications]{Classification of solutions to the Liouville equation with a nonlinear Neumann boundary condition and applications to sharp inequalities}


	\author{Xiaohan Cai and Abdolhakim SHOUMAN\\}
	\date{}
		\date{}

        \address{\noindent Xiaohan Cai\newline Shanghai Jiao Tong University, Shanghai,  200240, P. R. China. 
	}  
    \email{\href{mailto:xiaohancai@sjtu.edu.cn}{xiaohancai@sjtu.edu.cn} }
    
	\address{\noindent Abdolhakim Shouman\newline Université de Tours, Université d’Orléans, CNRS, IDP, UMR 7013, Tours, France
    }  
    \email{\href{mailto:hakim$\_$shouman@hotmail.com}{hakim$\_$shouman@hotmail.com} ,   \href{mailto:Shouman@univ-tours.fr}{shouman@univ-tours.fr}.}

	\keywords{Unit ball, Liouville equation, Gaussian and geodesic curvatures, Classification, P-function, Onofri inequality, Lebedev-Milin inequality}
	\subjclass[2020]{58J32 $\cdot$  35J60}

\begin{abstract}
    In this paper, we study the Liouville equation with a nonlinear Neumann boundary condition
\begin{equation*}
    \begin{cases}
-\Delta u = Ke^{2u} & \text{in } \mathbb{B}^{2},\\[2mm]
\dfrac{\partial u}{\partial \nu}+\lambda = ke^{u} & \text{on } \partial\mathbb{B}^{2},
\end{cases}
\end{equation*}
where $K,k\in\mathbb{R}$, $\lambda\in(0,1]$ are constants,  and $\nu$ denotes the outward unit normal on $ \partial\mathbb{B}^{2}$.
We establish a classification of all smooth solutions to the equation.
Our approach is a boundary-adapted P-function method which treats all signs of $K$ and $k$ within a unified framework.
As applications of the classification result, we establish a family of sharp Sobolev-trace-type inequalities encompassing the classical Lebedev--Milin inequality, together with a corresponding deficit estimate.
\end{abstract}

\maketitle
\section{Introduction}

One culmination of classical surface theory is the uniformization theorem, at which topology, complex analysis, and geometric partial differential equations meet.
It asserts that every surface admits a conformal metric of constant Gauss curvature. The underlying principle also leads to the systematic study of prescribed Gauss curvature problem, and has had a lasting influence on geometric PDE and nonlinear analysis; see, for instance, \cite{KW71,KW74,Mos73,CY87, CGY93}.

The natural counterpart for compact surfaces with boundary has also garnered much study. Let $(M^2,g)$ be a compact surface with boundary, then the Gaussian curvature $K$ and the boundary geodesic curvature $k$ transform under a conformal transformation $\tilde{g}=e^{2u}g$ by
\begin{equation}
\label{eq. general conformal curvature equation}
    \begin{cases}
        -\Delta_g u+K_g=K_{\tilde{g}}e^{2u}, 
        &\text{in }M^2,\\[2mm]
        \displaystyle{\frac{\partial u}{\partial\nu}
        +k_g=k_{\tilde{g}}e^u}, 
        &\text{on }\partial M^2.
    \end{cases}
\end{equation}

The uniformization problem for compact surfaces with boundary was resolved in \cite{Osgood88,Bre02,Bre02Family}. In contrast, prescribing Gaussian and geodesic curvatures within a given conformal class is considerably more subtle.  The special cases in which either $K_{\tilde g}\equiv 0$ or $k_{\tilde g}\equiv 0$ have been extensively investigated; see, for instance, \cite{CY87,LL93,CSV25,CV19,CL96}.
When both curvatures are prescribed, the interaction between the interior and boundary nonlinearities gives rise to further analytic and variational difficulties. Consequently, their simultaneous prescription remains a challenging and active area of research; see \cite{CBR18,Rui24,LMR22,BLS25,LRR26} and the references therein.

Alongside the existence problem, a natural question is the uniqueness of the conformal metrics with constant Gauss curvature and geodesic curvature.
In higher dimensions, the analogous classification and uniqueness problem goes back to the foundational work of Obata \cite{Oba71} for closed manifolds and Escobar \cite{Esc88} for compact manifolds with convex boundary.
Motivated in part by these results, the classification  of solutions to equations of the form \cref{eq. general conformal curvature equation}, as well as their higher-dimensional counterparts, have been studied extensively. We refer the reader, for instance, to \cite{Osgood88,Esc90,HW06,Jim12,Wang2017,GHW21,GL25,Dou-Xu,Cai26} and the references therein.

When $(M^2,g)$ is the flat unit disk $\mathbb{B}^2$, the curvature transformation equation \cref{eq. general conformal curvature equation} simplifies to
\[
\begin{cases}
    -\Delta u=K_{\tilde{g}}e^{2u}
    &\text{in }\mathbb{B}^2,\\[2mm]
    \displaystyle{\frac{\partial u}{\partial\nu}
    +1=k_{\tilde{g}}e^u},
    &\text{on }\partial \mathbb{B}^2.
\end{cases}
\]
When $K_{\tilde g}$ and $k_{\tilde g}$ are constant, the classification of solutions is rather delicate, with difficulties arising from both the critical nature of the exponential nonlinearities and the non-compactness of the conformal automorphism group of $\mathbb{B}^2$.

Our first main result gives a complete classification of all smooth solutions to a slightly more general equation. 
\begin{theorem}
\label{ClassificationTh}
     Let $u$ be a smooth solution to the equation 
     \begin{equation} 
    \label{eq. conformal curvature equation}
    \begin{cases} -\Delta u = Ke^{2u} & \text{in } \mathbb{B}^2,\\[2mm] \dfrac{\partial u}{\partial \nu} + \lambda = ke^u & \text{on } \partial\mathbb{B}^2, \end{cases} 
    \end{equation}
     for three constants $K,k$ and $\lambda\in(0,1]$. Up to a scaling, we assume $K\in\{-1,0,+1\}$, then
     \[
     k\in\mathbb{R},\quad \text{if } K=1;\qquad
     k>0,\quad \text{if } K=0;\qquad
     \begin{cases}
         k\geq \sqrt{\lambda(2-\lambda)} &\text{if } 0<\lambda<1,\ K=-1,\\[2mm]
         k>1 &\text{if }
         \lambda=1,\qquad\, K=-1,
     \end{cases}
     \]
     and the solution is given by
     \[
     u(x)=
     \begin{cases}
         -\log
         \left(\frac{\epsilon^2+K |x|^2}{2\epsilon}\right)
         &\text{if }0<\lambda<1,\\[3mm]
         T_\phi\left(
         -\log
         \left(\frac{\epsilon^2+K |x|^2}{2\epsilon}\right)
         \right)
         &\text{if }\lambda=1,
     \end{cases}
     \]
     where $\epsilon$ is a constant depending explicitly on three constants $K,k$ and $\lambda$, $\phi\in \operatorname{Aut}(\mathbb{B}^2)$ is a M\"obius transformation, and 
     \begin{equation}
     \label{eq. Mobius action}
         T_\phi w:=w\circ\phi+\log |\phi'|
     \end{equation}
     is the two-dimensional M\"obius action  corresponding to the M\"obius transformation $\phi$.
\end{theorem}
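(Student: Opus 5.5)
The abstract advertises a boundary-adapted P-function method, so I will build the proof around one. Set $f=e^{u}$ and pass to the complex coordinate $z$. Because $-\Delta u=Ke^{2u}$, the Liouville equation makes the Schwarzian-type quantity
\[
\Phi:=u_{zz}-u_z^2
\]
holomorphic in $\mathbb{B}^2$; this is the classical fact that $u$ is locally $-\log\frac{|F'|}{1+\frac K4|F|^2}$ up to constants. The target solutions $-\log\frac{\epsilon^2+K|x|^2}{2\epsilon}$, and their M\"obius transforms $T_\phi(\cdot)$, are exactly those with $\Phi\equiv 0$. The plan therefore has three parts: prove $\Phi\equiv0$, classify the solutions with $\Phi\equiv 0$, and then impose the boundary condition.

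\textbf{Step 1 ($\Phi\equiv 0$).} I would use the boundary condition to control $\Phi$ on $\partial\mathbb{B}^2$. Differentiating $u_r+\lambda=ke^u$ tangentially on $|z|=1$ and combining it with the equation expresses $\operatorname{Re}$ or $\operatorname{Im}$ of $z^2\Phi(z)$ on the circle in terms of $u$. A more robust form is a P-function: let
\[
P=|\nabla u|^2\text{-type quadratic expression}+Ke^{2u}+\ldots,
\]
built so that $\Delta P\ge 0$ (or $P$ is subharmonic in a weighted sense), with $|\Phi|^2$ entering the inequality. I would show that the boundary condition forces $\partial_\nu P\le 0$ on $\partial\mathbb{B}^2$, with a correction term $(1-\lambda)(\cdots)$ that is where $\lambda$ enters. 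Hopf's lemma then makes $P$ constant, which gives $\Phi\equiv 0$. An alternative is to observe that $g(z)=z^2\Phi(z)$ is holomorphic and to use the boundary relation to show that $\operatorname{Im} g=0$ on the circle. Then $g$ is a real constant, and $g(0)=0$ gives $g\equiv0$.

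\textbf{Step 2 (solving $\Phi\equiv 0$).} With $\Phi\equiv0$, the function $e^{-u}$ satisfies $(e^{-u})_{zz}=0$. Hence $e^{-u}=a|z|^2+\operatorname{Re}(bz)+c$ is a real quadric with $4(ac)-|b|^2$ tied to $K$ by the equation, up to normalization. Substituting into the boundary condition on $|z|=1$ gives an identity in $\operatorname{Re}(bz)$ that must hold for all $z$ on the circle. If $\lambda<1$, this identity forces $b=0$, so $u$ is radial, $u=-\log\frac{\epsilon^2+K|x|^2}{2\epsilon}$. If $\lambda=1$, $b$ may be nonzero, and these are precisely the M\"obius images $T_\phi$ of the radial solutions, because the problem with $\lambda=1$ is conformally invariant under $\operatorname{Aut}(\mathbb{B}^2)$.

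\textbf{Step 3 (admissible $k$).} For the radial profile, the boundary condition reads
\[
\frac{2K}{\epsilon^2+K}\cdot(-1)+\lambda=k\,\frac{2\epsilon}{\epsilon^2+K},
\]
which is a quadratic in $\epsilon$. Positivity of $\epsilon^2+K|x|^2$ on $\overline{\mathbb{B}^2}$ and solvability for $\epsilon>0$ give the stated ranges: any $k$ for $K=1$, $k>0$ for $K=0$, and for $K=-1$ the discriminant condition $k^2\ge\lambda(2-\lambda)$ together with $\epsilon>1$. For $\lambda=1$ the bound becomes $k>1$, because the equality case degenerates (the horocycle limit). The main obstacle is Step 1: finding a P-function whose boundary normal derivative has a sign for all signs of $K,k$ and for $\lambda<1$. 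I would first try
\[
P=e^{-u}\bigl(|\nabla u|^2+Ke^{2u}\bigr)
\]
adjusted by a term linear in $x\cdot\nabla u$, and tune the coefficients so that the boundary terms combine into $-(1-\lambda)\cdot(\text{tangential derivative})^2\le0$.
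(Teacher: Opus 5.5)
Your reduction matches the paper's, but the one step that carries all the difficulty, proving $\Phi\equiv0$, is not established, and neither route you sketch works for $0<\lambda<1$.

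\textbf{What is correct.} For $v=e^{-u}$ one has $v_{zz}=-e^{-u}\Phi$. So $\Phi\equiv0$ is exactly the vanishing of the traceless Hessian $A:=\nabla^2v-\frac{\Delta v}{2}g_0$. Your candidate $P=e^{-u}(|\nabla u|^2+Ke^{2u})$ is precisely the paper's $P=v^{-1}(K+|\nabla v|^2)=\Delta v$, which satisfies $\Delta P=2v^{-1}|A|^2\ge0$. Your Steps 2--3 reproduce the paper's quadric analysis. For $\lambda=1$ you still need to show that every admissible quadric is a M\"obius image; the paper does this by writing $\xi=2\sqrt{k^2+K}\,a/(1-|a|^2)$ and checking signs.

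\textbf{The Hopf route.} On $\partial\mathbb{B}^2$, with $f=v|_{\partial\mathbb{B}^2}$, one computes
\[
\tfrac12\partial_\nu P=(\lambda-1)f^{-1}|\nabla f|^2+\big(\lambda-kf^{-1}\big)A(\nu,\nu).
\]
The last term has no sign for general $K,k$, and you give no way to remove it. \cref{RK comp with wang} explains that a maximum-point argument of this type yields only $S_K^2\le-\frac K2S_K$, which is inconclusive when $K\neq0$.

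\textbf{The holomorphic route.} Differentiating $u_r+\lambda=ke^u$ in $\theta$ gives $u_{r\theta}=ke^uu_\theta$. Hence on $|z|=1$,
\[
\operatorname{Im}\big(z^2\Phi\big)=\tfrac12\big(u_\theta-u_{r\theta}+u_ru_\theta\big)=\tfrac{1-\lambda}{2}\,u_\theta .
\]
This vanishes a priori only when $\lambda=1$. So that argument settles the conformally invariant case and leaves the new range $0<\lambda<1$ open.

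\textbf{The missing idea.} Work with integrals, where the indefinite boundary terms can be eliminated. Your instinct to bring in $x\cdot\nabla u$ is close: the paper uses $w=\frac{1-|x|^2}{2}$, so $\nabla w=-x$, $\nabla^2w=-g_0$, and $\nabla w=-\nu$ on $\partial\mathbb{B}^2$. It uses $w$ through the divergence theorem, not to make $\partial_\nu P$ signed pointwise. \cref{lem. Cai26}, together with $\frac12\nabla P=v^{-1}A(\nabla v,\cdot)$, gives
\[
\operatorname{div}\big(v^{-1}(\nabla_{\nabla w}\nabla v-\tfrac{\Delta v}{2}\nabla w)\big)=0 .
\]
This yields two facts:
\[
\int_{\partial\mathbb{B}^2}f^{-1}A(\nu,\nu)=0,\qquad \tfrac12\int_{\mathbb{B}^2}w\Delta P=\int_{\partial\mathbb{B}^2}A(\nu,\nu).
\]
The first kills the $k$-term; the second turns the $\lambda$-term into an interior integral. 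Integrating the boundary formula for $\partial_\nu P$ then gives
\[
\int_{\mathbb{B}^2}(1-\lambda w)\,v^{-1}|A|^2=(\lambda-1)\int_{\partial\mathbb{B}^2}f^{-1}|\nabla f|^2 .
\]
Since $1-\lambda w\ge1-\frac\lambda2>0$ and $\lambda\le1$, both sides vanish. Hence $A\equiv0$, i.e.\ $\Phi\equiv0$, and for $\lambda<1$ also $f$ is constant.

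In your notation the identity reads
\[
8\int_{\mathbb{B}^2}(1-\lambda w)e^{-u}|\Phi|^2=(\lambda-1)\int_{\partial\mathbb{B}^2}e^{-u}u_\theta^2 .
\]
The term $(1-\lambda)u_\theta$ that blocks your holomorphic argument reappears here squared, with the favorable sign.
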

\noindent Several special cases related to \cref{ClassificationTh} were previously known.
\begin{enumerate}
    \item In the conformally invariant case $\lambda=1$, conformal metrics with constant boundary geodesic curvature and positive, zero, or negative constant Gaussian curvature were classified in
\cite[Theorem 1.3]{LZ95}, 
\cite[Theorem 3.1]{Ou00}, 
and \cite[Theorem 4.3]{Zha03}, respectively, These works transform the equation into an equivalent problem on the upper half-plane and then apply the method of moving-planes. See also \cite{HW06} for an alternative proof based on complex-analytic methods.
\\

\item For general values of $\lambda$, several parameter regimes have also been treated.
In the positive-curvature regime, Dou and Xu \cite[Theorem~1.1]{Dou-Xu} established the classification for $K>0$ and $k=0$ through sophisticated integration by parts. Very recently, Dou-Hu-Peng \cite[Theorems 1.1 and 1.3]{DHP26} classified all solutions to \cref{eq. conformal curvature equation} for $K>0$, $k>0$, and $\lambda\in(0,2]$  by analyzing the holomorphic-lift representation of the solutions.
In the flat case $K=0$ and $k>0$, Wang \cite[Theorem~2]{Wang2017} obtained the classification via a pointwise maximum-principle argument; a closely related result had previously appeared in \cite{Osgood88}.
\end{enumerate}

We note that the higher-dimensional analogue of \cref{ClassificationTh} has been established by \cite{Esc90}. More precisely, Escobar proved the following classification result.
\begin{theorem}[\cite{Esc90}]
\label{thm. Escobar classification}
     Let $u \in C^{\infty}\left(\mathbb{B}^n\right)$ be a positive solution of the following equation
    \begin{equation}
    \label{eq. conformal scalar curvature equation}
        \begin{cases}\displaystyle{-\Delta u=\frac{n-2}{4(n-1)} R\, u^{\frac{n+2}{n-2}}}& \text { in } \mathbb{B}^n \\[4mm] 
    \displaystyle{\frac{\partial u}{\partial \nu}+\frac{n-2}{2} u=\frac{n-2}{2(n-1)} H u^{\frac{n}{n-2}}}& \text { on } \mathbb{S}^{n-1}
    \end{cases}
    \end{equation}
 where $R,H$ are two constants. Up to a scaling, we assume $R=\sigma n(n-1)$ for some $\sigma\in\{-1,0, +1\}$, then
 \[
 H\in\mathbb{R}\quad \text{if }\sigma=1;\qquad
 H>0\quad \text{if }\sigma=0;\qquad 
 H>n-1\quad \text{if }\sigma=-1,
 \]
 and there exists a Mobius transformation $\phi\in \operatorname{Aut}(\mathbb{B}^n)$, such that
 \[
 u(x)=T_\phi
 \left(\left(\frac{\epsilon^2+\sigma|x|^2}{2\epsilon}\right)^{-\frac{n-2}{2}}\right),
 \]
 where 
 $0<\epsilon:=\frac{H}{n-1}+\sqrt{\left(\frac{H}{n-1}\right)^2+\sigma}$, and
 \[
 T_\phi w:=(w\circ\phi) |\operatorname{det}D\phi|^{\frac{n-2}{2n}}.
 \]
\end{theorem}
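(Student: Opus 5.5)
The plan is to follow Obata's strategy as adapted by Escobar to manifolds with boundary, working directly with the conformal metric $\tilde g=u^{4/(n-2)}|dx|^2$ on $\overline{\mathbb{B}^n}$. By \cref{eq. conformal scalar curvature equation}, $\tilde g$ has constant scalar curvature $R$, and $\partial\mathbb{B}^n$ has constant mean curvature (a multiple of $H$) in $\tilde g$. Since umbilicity is conformally invariant and $\mathbb{S}^{n-1}$ is umbilic in the flat metric, $\partial\mathbb{B}^n$ is also totally umbilic for $\tilde g$. The goal is to show that $\tilde g$ is Einstein, hence of constant sectional curvature, and then to read off $u$ explicitly.

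\textbf{Step 1 (pointwise reformulation).} Set $w=u^{-2/(n-2)}$, so that $\tilde g=w^{-2}|dx|^2$. The traceless Ricci tensor of $\tilde g$ is
\[
E=(n-2)\,w^{-1}\Bigl(\nabla^2 w-\tfrac{\Delta w}{n}\,\delta\Bigr),
\]
computed with the flat connection. Thus $E\equiv0$ if and only if $\nabla^2 w=\frac{\Delta w}{n}\delta$. In that case $w$ is a quadratic $w=a|x|^2+b\cdot x+c$.

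\textbf{Step 2 (integral identity).} Since $R$ is constant, the contracted Bianchi identity gives $\operatorname{div}_{\tilde g}E=0$. Because $\partial\mathbb{B}^n$ is umbilic with constant mean curvature, the Codazzi equation gives $E(\nu,T)=0$ for every vector $T$ tangent to the boundary. I then pair $E$ with the conformal Killing field $X$ of $\tilde g$ furnished by the flat structure. For $n\ge3$ this is the field $\tilde\nabla f$, with $f$ an explicit function built from $w$ and its derivatives (essentially $f=w^{-1}\cdot$(affine combination), the P-function of the problem). Integrating $\operatorname{div}(E(X,\cdot))$ then yields
\[
\int_{\mathbb{B}^n}\phi\,|E|^2_{\tilde g}\,dV_{\tilde g}=\int_{\partial\mathbb{B}^n}E(X,\nu)\,d\sigma_{\tilde g}
\]
with a positive weight $\phi$. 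The boundary term reduces to $E(\nu,\nu)\langle X,\nu\rangle$. I would rewrite it using the boundary equation, differentiated tangentially along $\mathbb{S}^{n-1}$. After integrating by parts on the sphere, the goal is to show that it equals minus a nonnegative quantity, or zero.

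\textbf{Step 3 (conclusion).} Once $E\equiv0$, I write $w=a|x|^2+b\cdot x+c$. Substituting into the interior equation gives $4ac-|b|^2=\sigma$ after the normalization $R=\sigma n(n-1)$. Substituting into the boundary equation forces a relation between $a$, $c$ and $H$. Composing with a Möbius map $\phi$ that moves the critical point of $w$ to the origin, using the covariance of $T_\phi$, kills $b$. The two relations then give $c=\epsilon/2$ and $a=\sigma/(2\epsilon)$, with $\epsilon$ the root stated in the theorem. The positivity of $w$ on $\overline{\mathbb{B}^n}$ yields the ranges of $H$: when $\sigma=-1$ we need $\epsilon^2>1$, i.e.\ $H>n-1$; when $\sigma=0$ we need $H>0$.

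\textbf{Main obstacle.} The hard part is Step 2: the sign or vanishing of the boundary term. The interior divergence structure is standard Obata. On the boundary, however, $E(\nu,\nu)$ is not controlled directly, and one has to exploit both the umbilicity and the precise boundary condition. My first attempt would be to choose $X$ tangent to $\partial\mathbb{B}^n$, which makes the boundary term vanish identically. This amounts to using the conformal field $x\mapsto x-(x\cdot\xi)\ldots$ adapted to $\mathbb{S}^{n-1}$, or equivalently reflecting $u$ across the sphere via Kelvin inversion. If no such choice yields positivity of $\phi$, I would fall back on a moving-sphere argument after inversion to a half-space, in the spirit of Li--Zhu.
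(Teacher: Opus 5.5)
The framework is right, but there is a genuine gap: Step 2 is where the theorem is actually proved, and you leave it open. Step 1 and the use of Codazzi to get $E(\nu,T)=0$ are fine. The remedy you propose cannot work, because it conflates two different kinds of vector fields.

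If $X$ is a conformal Killing field of $\tilde g$, then $\operatorname{div}_{\tilde g}(E(X,\cdot))=\tfrac12\langle E,\mathcal L_X\tilde g\rangle=0$, since $\operatorname{div}_{\tilde g}E=0$ and $E$ is traceless. So there is no $\int\phi|E|^2$ term at all, and taking such an $X$ tangent to $\partial\mathbb{B}^n$ only yields $0=0$.

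The fields that do produce $|E|^2$ are gradients $\tilde\nabla\psi$ with $\psi^{-2}\tilde g$ Einstein, for which $\tilde\nabla^2\psi-\frac{\tilde\Delta\psi}{n}\tilde g=-\frac{\psi}{n-2}E$. Your guess ``$w^{-1}\cdot(\dots)$'' is of this type: $\psi=q/w$ works for every $q\in\operatorname{span}\{1,x_1,\dots,x_n,|x|^2\}$. But then $\partial_\nu\psi$ is not at your disposal. The conformal law for mean curvature, together with $H_{\tilde g}=\frac{H}{n-1}$, forces $\partial_\nu\psi=\frac{H}{n-1}\psi-H_q$. Here $H_q=(q-\partial_r q)|_{\partial\mathbb{B}^n}$ is the constant mean curvature of the sphere in $q^{-2}|dx|^2$. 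With the flat background of your Step 1 ($q=1$, $H_q=1$), the boundary term is $\int_{\partial\mathbb{B}^n}E(\nu,\nu)\bigl(\frac{H}{n-1}w^{-1}-1\bigr)\,d\sigma_{\tilde g}$, which has no evident sign. The moving-sphere fallback is a different proof altogether and is not developed.

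The missing idea is to combine both kinds of fields with the right background; this is what Escobar's passage to $\mathbb{S}^n_+$ achieves. Take $q=1+|x|^2$. Then $\psi^{-2}\tilde g=(1+|x|^2)^{-2}|dx|^2$ is a multiple of the round metric on a hemisphere, whose boundary is totally geodesic. Hence $H_q=0$, $\partial_\nu\psi=\frac{H}{n-1}\psi$, and $\psi=2/w$ on $\partial\mathbb{B}^n$. Integrating $\operatorname{div}_{\tilde g}(E(\tilde\nabla\psi,\cdot))=-\frac{\psi}{n-2}|E|^2_{\tilde g}$ and using $E(\nu,T)=0$ gives
\[
\int_{\mathbb{B}^n}\psi\,|E|^2_{\tilde g}\,dV_{\tilde g}
=-\frac{2(n-2)H}{n-1}\int_{\partial\mathbb{B}^n}w^{-1}E(\nu,\nu)\,d\sigma_{\tilde g}.
\]
The right-hand side vanishes by the Kazdan--Warner-type identity for a conformal Killing field that is \emph{normal}, not tangent, to the boundary. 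The dilation $X=x$ equals $w^{-1}\nu_{\tilde g}$ on $\partial\mathbb{B}^n$, so integrating $\operatorname{div}_{\tilde g}(E(X,\cdot))=0$ yields $\int_{\partial\mathbb{B}^n}w^{-1}E(\nu,\nu)\,d\sigma_{\tilde g}=0$. Since $\psi>0$, $E\equiv0$.

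This is the same mechanism as the paper's two-dimensional proof. There, \cref{eq. first boundary term} is the dilation identity (the weight $\frac{1-|x|^2}{2}$ has gradient $-x$), and at $\lambda=1$ the weight $\frac{1+|x|^2}{2}$ in \cref{eq. key integral identity} is the hemisphere factor.

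A minor point in Step 3: ``moving the critical point of $w$ to the origin'' is not literally available. The critical point may lie outside $\overline{\mathbb{B}^n}$, or not exist when the coefficient of $|x|^2$ vanishes. Instead, parametrize the admissible quadratics by $\operatorname{Aut}(\mathbb{B}^n)$, as in Step 4 of the paper's proof.
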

Exploiting the conformal invariance of \cref{eq. conformal scalar curvature equation}, Escobar reduced the problem to the classification of solutions to the corresponding equation on the hemisphere $\mathbb{S}^n_+$, after which an Obata-type argument \cite{Oba71} completed the proof.

Inspired in part by \cite{Esc90}, we develop a boundary-adapted P-function method to prove \cref{ClassificationTh}.
Pioneered by L.~E. Payne, this method has proved particularly effective in rigidity problems; see, for example, \cite{Pay68,Wei71,CFP24,Cai26}.

A notable feature of our argument is that it treats all possible signs of $K$ and $k$ within a unified framework: their signs enter only in the final algebraic step. A comparison of our method with the pointwise maximum principle method used in \cite[Theorem 2]{Wang2017} is presented after the proof of \cref{ClassificationTh} (See \cref{RK comp with wang}). It is worth mentioning that our approach is also considerably simpler than those used to obtain the previously known classification results.
\vspace{1em}

For $\lambda<1$, the classification of solutions to \cref{eq. conformal curvature equation} goes beyond a formal extension of the uniqueness problem at the conformally invariant value $\lambda=1$:  it yields several further variational consequences.
This interplay between classification results and sharp functional inequalities echoes Escobar's work on the sharp Sobolev trace inequality in $\mathbb{R}_{+}^{n}$ \cite[Theorem 3.3]{Esc90}.

The starting observation is that, after a suitable normalization, \cref{eq. conformal curvature equation} arises as the Euler--Lagrange equation for one of the following functionals:
\begin{equation*}
        F_{\lambda,\alpha}(u)
        :=\frac{1}{4\pi}
        \int_{\mathbb{B}^2}|\nabla u|^2
        +\frac{\lambda}{2\pi}
        \int_{\partial \mathbb{B}^2}u
        -\frac{\alpha}{2}
        \log\left(
        \frac{1}{\pi}\int_{\mathbb{B}^2}e^{2u}
        \right)
        -(\lambda-\alpha)
        \log\left(
        \frac{1}{2\pi}
        \int_{\partial\mathbb{B}^2}e^{u}
        \right).
    \end{equation*}
    Our second main result is a family of sharp Sobolev-trace-type inequalities.
\begin{theorem}\label{thm. sharp Sobolev-trace inequality}
    For  $\alpha\in(-\infty,2)$, there holds
        \[
        F_{1,\alpha}(u)
        \geq -\frac{\alpha}{2}
        -\left(1-\frac{\alpha}{2}\right)
        \log\left(1-\frac{\alpha}{2}\right),\qquad 
        \forall u\in H^1(\mathbb{B}^2).
        \]
        Moreover, the equality holds if and only if
        \begin{equation*}
            u=T_{\phi}
            \left(
            -\log 
         \left(\frac{2-\alpha+\alpha|x|^2}{2}\right)
            \right)+c,\qquad \forall c\in\mathbb{R},
        \end{equation*}
        where  $T_\phi$ is the M\"obius action defined in \cref{eq. Mobius action}.
\end{theorem}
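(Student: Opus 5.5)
The strategy is variational: establish the existence of a minimizer of $F_{1,\alpha}$, identify its Euler--Lagrange equation with \cref{eq. conformal curvature equation} at $\lambda=1$, and then apply \cref{ClassificationTh} to determine the minimizer and the minimum value explicitly.

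\emph{Invariances.} First I will record that $F_{1,\alpha}$ is invariant under $u\mapsto u+c$ and under the M\"obius action $u\mapsto T_\phi u$. The invariance under $T_\phi$ follows from the conformal invariance of $\frac{1}{4\pi}\int|\nabla u|^2+\frac{1}{2\pi}\int_{\partial\mathbb{B}^2}u$ together with the Lebedev--Milin/Onofri-type change of variables. Since $e^{2T_\phi u}$ and $e^{T_\phi u}$ transform as area and length densities, the last two log terms are also invariant. Together with the sign of the coefficients, this gives the correct total weight $1$ for the boundary term.

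\emph{Lower bound and existence of a minimizer.} For $\alpha\le 0$, the term $-\frac\alpha2\log\frac1\pi\int e^{2u}$ is controlled by Jensen in the favourable direction. For $0<\alpha<2$, I combine two inequalities. The first is the sharp Lebedev--Milin inequality $\frac{1}{4\pi}\int|\nabla u|^2+\frac1{2\pi}\int_{\partial}u\ge\log\frac1{2\pi}\int_\partial e^u$. The second is the Moser--Trudinger-type inequality $\log\frac1\pi\int_{\mathbb{B}^2} e^{2u}\le \frac{1}{2\pi}\int|\nabla u|^2+\frac1\pi\int_\partial u+C$, i.e. $\frac12\log\frac1\pi\int e^{2u}\le \frac{1}{4\pi}\int|\nabla u|^2+\frac{1}{2\pi}\int_\partial u+C$. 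Since $\alpha<2$, the interior term carries only a fraction $\alpha/2<1$ of the Dirichlet energy, so $F_{1,\alpha}$ is bounded below. This alone is not coercive because of the noncompact M\"obius group.

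The main obstacle is this step: recovering compactness. I would handle it in the standard Aubin/Chang--Yang way. Use the M\"obius invariance to normalize minimizing sequences to have boundary (or interior) center of mass zero, i.e. $\int_{\partial}xe^{u}=0$. Then an improved Lebedev--Milin inequality with constant strictly better than $1$ holds. This yields coercivity of $F_{1,\alpha}$ on the normalized class modulo constants, so a minimizer exists in $H^1$. Elliptic regularity for exponential nonlinearities in 2D then gives smoothness.

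\emph{Identification and value.} The minimizer satisfies, after adding a constant, $-\Delta u=Ke^{2u}$ and $\partial_\nu u+1=ke^u$, with $K\pi/(\int e^{2u})$-type normalizations giving $K$ proportional to $\alpha$ and $k$ proportional to $1-\alpha/2$. By \cref{ClassificationTh} with $\lambda=1$, $u=T_\phi(-\log\frac{\epsilon^2+K|x|^2}{2\epsilon})+c$. Matching the ratio of $K$ and $k$ forces $u=T_\phi(-\log\frac{2-\alpha+\alpha|x|^2}{2})+c$. By invariance, it then suffices to evaluate $F_{1,\alpha}$ at $u_0=-\log\frac{2-\alpha+\alpha|x|^2}{2}$. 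Since $u_0$ is radial, this is a direct computation: $\frac1{2\pi}\int_\partial u_0=0$ and $\frac{1}{2\pi}\int_\partial e^{u_0}=1$. Using the equation, $\int|\nabla u_0|^2=\int_\partial u_0\partial_\nu u_0+\alpha$-weighted $\int u_0e^{2u_0}$ reduces to explicit logarithmic integrals, and $\frac1\pi\int e^{2u_0}=\frac{1}{1-\alpha/2}$ after integrating $4/(2-\alpha+\alpha r^2)^2$. Carrying this out should yield exactly $-\frac\alpha2-(1-\frac\alpha2)\log(1-\frac\alpha2)$. Since every minimizer, and hence every equality case, is of this form, the characterization of equality follows. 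The case $\alpha=0$ reduces to Lebedev--Milin, and negative $\alpha$ corresponds to $K<0$ with $k>1$, which is consistent with \cref{ClassificationTh}.
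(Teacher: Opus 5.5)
Your overall architecture is sound: normalize by the M\"obius group, minimize, apply \cref{ClassificationTh} at $\lambda=1$, and evaluate at $w_{\alpha/(2-\alpha)}$. For $1\le\alpha<2$ this is the paper's route. However, the step that carries all the content, namely a lower bound and coercivity for $F_{1,\alpha}$, fails for $\alpha<0$ and for $1<\alpha<2$. These are precisely the ranges where the theorem is new. At $\alpha=1$ your coercivity mechanism is also void.

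\textbf{The range $1<\alpha<2$.} Your arithmetic is off here. Multiply $\frac12\log\frac1\pi\int e^{2u}\le\frac1{4\pi}\int|\nabla u|^2+\frac1{2\pi}\int_{\partial\mathbb{B}^2}u+C$ by $\alpha$. This shows the interior term consumes a fraction $\alpha$ of the Dirichlet energy, not $\alpha/2$.

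The boundary logarithm now enters as $+(\alpha-1)\log(\frac1{2\pi}\int_{\partial\mathbb{B}^2}e^u)$, so only Jensen is available for it. You are left with $F_{1,\alpha}(u)\ge\frac{1-\alpha}{4\pi}\int|\nabla u|^2-C$, which is useless. The constant $\frac1{2\pi}$ is sharp: the orbits $T_{\phi_a}w_q$ with $a\to\partial\mathbb{B}^2$ concentrate at a boundary point and saturate it.

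Your remedy does not touch this term. An improved Lebedev--Milin inequality under $\int_{\partial\mathbb{B}^2}xe^u=0$ bounds the boundary logarithm from above. But that term has the favourable sign here, and at $\alpha=1$ it does not even appear in $F_{1,1}$.

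What you need is an Aubin-type improvement of the \emph{interior} inequality:
\[
\log\Big(\frac1\pi\int_{\mathbb{B}^2}e^{2u}\Big)\le\frac{1+\epsilon}{4\pi}\int_{\mathbb{B}^2}|\nabla u|^2+\frac1\pi\int_{\partial\mathbb{B}^2}u+C_\epsilon\quad\text{whenever }\int_{\mathbb{B}^2}ze^{2u}=0.
\]
This is \cref{lem. improved weak Moser on ball}, proved via the concentration alternative \cref{lem. concentration for measures} and a spectral splitting $u=u_1+u_2$. Combine it with a Hersch-type normalization of the \emph{interior} barycenter (\cref{lem. Disk Hersch normalization}). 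This gives $F_{1,\alpha}\ge\frac{2-(1+\epsilon)\alpha}{8\pi}\int|\nabla u|^2-C_\epsilon$ on the normalized class.

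\textbf{The range $\alpha<0$.} ``Jensen in the favourable direction'' only bounds $-\frac\alpha2\log\frac1\pi\int e^{2u}$ from below by an interior mean. The real obstruction is that the boundary logarithm carries coefficient $1-\alpha>1$, more than sharp Lebedev--Milin can absorb.

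Write $F_{1,\alpha}=F_{1,0}+\frac\alpha2\log I(u)$. You then need an upper bound on the isoperimetric ratio $I(u)$, together with a genuinely coercive share of the Dirichlet energy.

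The paper obtains both without any normalization by working with $F_{\lambda,\alpha}$ for $\lambda\in(\max\{\alpha,0\},1)$:
\begin{itemize}
\item \cref{eq. weak LMO inequality} leaves $\frac{1-\lambda-\epsilon\lambda}{4\pi}\int|\nabla u|^2$.
\item \cref{eq. isoperimetric upper bound} gives $\log I(u)\le 2\log\big(1+(\frac1{4\pi}\int|\nabla u|^2)^{1/2}\big)$.
\end{itemize}
Hence $F_{\lambda,\alpha}$ is coercive on all of $H^1(\mathbb{B}^2)$. The classification at $\lambda<1$ then forces the minimizer to be radial, namely $w_{\alpha/(2-\alpha)}+c$, and one lets $\lambda\to1$.

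Within your framework, an Aubin-type improved Lebedev--Milin inequality plus the same isoperimetric bound could replace this. Jensen alone cannot.

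A smaller slip: the Euler--Lagrange equation gives $k=2\pi(1-\alpha)/\int_{\partial\mathbb{B}^2}e^u$. This is proportional to $1-\alpha$, not $1-\alpha/2$.
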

    At the endpoint $\alpha=2$, the infimum of $F_{1,2}$ equals $-1$ but is not attained. By contrast, if $\alpha>2$ or $\lambda>1$, the functional $F_{\lambda,\alpha}$ is unbounded below. This illustrates the sharpness of the stated ranges of $\alpha$ and $\lambda$; see \cref{prop. sharpness of alpha}.

    \cref{thm. sharp Sobolev-trace inequality} extends several known inequalities. 
\begin{enumerate}
    \item For $\alpha=0$, \cref{thm. sharp Sobolev-trace inequality} reduces to the classical Lebedev-Milin inequality \cite{LM51} (see also \cite[Chapter 5.1]{Dur01} and \cite[(4')]{Osgood88}) 
    \begin{equation}
    \label{eq. Lebedev-Milin ineq.}
        \frac{1}{4\pi}\int_{\mathbb{B}^2}|\nabla u|^{2}
    +\frac{1}{2\pi}\int_{\partial\mathbb{B}^2}u
    \geq \log\left(\frac{1}{2\pi}\int_{\partial\mathbb{B}^2}e^{u}\right),\quad 
    \forall u\in H^1(\mathbb{B}^2).
    \end{equation}
    
\item For $\alpha=1$, \cref{thm. sharp Sobolev-trace inequality} is an immediate consequence of the sharp Moser-Trudinger-Onofri inequality on $\mathbb{S}^2$ and has been presented explicitly in  \cite[Eq. (1.8)]{Dou-Xu}:
\begin{equation}
\label{eq. sharp Moser Trudinger Onofri on disk}
    \frac{1}{4\pi}\int_{\mathbb{B}^2}|\nabla u|^2
        +\frac{1}{2\pi}
        \int_{\partial \mathbb{B}^2}u
        \geq\frac{1}{2}
        \log\left(
        \frac{2}{e\pi}\int_{\mathbb{B}^2}e^{2u}
        \right),\quad 
         \forall u\in H^1(\mathbb{B}^2).
\end{equation}

\item For $\alpha\in(0,1)$, it is immediate to see that \cref{thm. sharp Sobolev-trace inequality} is nothing but a linear combination of the above two inequalities.
\end{enumerate}
    
    Compared with these known results, our \cref{thm. sharp Sobolev-trace inequality} is new for all $\alpha\in(-\infty,0)\cup(1,2]$. 
    Note that inequalities \cref{eq. Lebedev-Milin ineq.} and  \cref{eq. sharp Moser Trudinger Onofri on disk}, together with their extensions to general compact surfaces with boundary, are fundamental analytic tools in prescribing curvature problems; see, for instance, \cite{CBR18}. In light of this, we believe that the sharp inequalities established in \cref{thm. sharp Sobolev-trace inequality} hold potential for further applications in existence, compactness, and blow-up problems involving simultaneous interior and boundary curvature prescription.

    The proof of \cref{thm. sharp Sobolev-trace inequality} divides into two cases: $\alpha<1$ and $1\leq\alpha<2$. In the first case, the overall strategy is to implement the direct method to obtain a minimizer, then \cref{ClassificationTh} is used to determine the corresponding extremal value. When $1\leq\alpha<2$, however, the functional $F_{\lambda,\alpha}$ is no longer coercive on $H^1(\mathbb{B}^2)$, so the direct method cannot be applied directly. To overcome this obstacle, we establish an Aubin-type improvement of the Moser--Trudinger inequality \cite{Aub79}, which restores coercivity on  functions with vanishing barycenter; see \cref{lem. improved weak Moser on ball}.

    \vspace{1em}

    As a further consequence of the family of sharp inequalities, we derive a lower bound for the deficit of the classical Lebedev-Milin inequality.

    \begin{corollary}
    \label{cor. stability for LM inequality}
        For $u\in H^1(\mathbb{B}^2)$, define the isoperimetric ratio
        \begin{equation*}
            I(u):=\frac{\left(\int_{\partial\mathbb{B}^2}e^{u}\right)^2}{4\pi\int_{\mathbb{B}^2}e^{2u}}.
        \end{equation*}
       Then there holds
       \begin{equation*}
          F_{1,0}(u)=\frac{1}{4\pi}
        \int_{\mathbb{B}^2}|\nabla u|^2
        +\frac{1}{2\pi}
        \int_{\partial \mathbb{B}^2}u
        -\log\left(
        \frac{1}{2\pi}
        \int_{\partial\mathbb{B}^2}e^{u}
        \right)
        \geq \varphi(I(u)), \qquad \forall u\in H^1(\mathbb{B}^2),
       \end{equation*}
       where 
       \begin{equation*}
           \varphi(t):=t-1-\log t\geq0,\quad \forall t>0.
       \end{equation*}
       Moreover, equality holds if and only if 
       \begin{equation}
       \label{eq. minimizer}
           u(x)=T_\phi
           \left(
           -\log
        \left(\frac{1+q|x|^2}{1+q}\right)
           \right)+c,
       \end{equation}
       where $q>-1,\ c\in\mathbb{R}$ are constants, and $T_\phi$ is the M\"obius action defined in \cref{eq. Mobius action}.
    \end{corollary}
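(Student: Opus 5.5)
The plan is to obtain the deficit bound by optimizing the family of inequalities in \cref{thm. sharp Sobolev-trace inequality} over $\alpha$. Write
\[
A:=\log\Bigl(\frac1\pi\int_{\mathbb{B}^2}e^{2u}\Bigr),\qquad B:=\log\Bigl(\frac1{2\pi}\int_{\partial\mathbb{B}^2}e^{u}\Bigr),
\]
so that $\log I(u)=2B-A$. Since
\[
F_{1,\alpha}(u)=F_{1,0}(u)-\frac{\alpha}{2}A+\alpha B=F_{1,0}(u)+\frac{\alpha}{2}\log I(u),
\]
\cref{thm. sharp Sobolev-trace inequality} gives, for every $\alpha<2$,
\[
F_{1,0}(u)\ \ge\ -\frac{\alpha}{2}\log I-\frac{\alpha}{2}-\Bigl(1-\frac\alpha2\Bigr)\log\Bigl(1-\frac\alpha2\Bigr).
\]
Set $s=1-\alpha/2\in(0,\infty)$. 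The right-hand side becomes
\[
g(s):=(s-1)\log I+s-1-s\log s .
\]
Its derivative is $g'(s)=\log I-\log s$, so $g$ is maximized at $s=I$, which is admissible since $I>0$. The maximum value is
\[
g(I)=(I-1)\log I+I-1-I\log I=I-1-\log I=\varphi(I).
\]
This proves the inequality, including the case $\alpha\le 0$, where $s\ge 1$; that case is needed whenever $I\ge1$.

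For the equality characterization, suppose $F_{1,0}(u)=\varphi(I(u))$. Then with $\alpha=2-2I(u)$ we have equality in $F_{1,\alpha}(u)\ge\cdots$. By the equality part of \cref{thm. sharp Sobolev-trace inequality},
\[
u=T_\phi\bigl(-\log\bigl((2-\alpha+\alpha|x|^2)/2\bigr)\bigr)+c .
\]
Setting $q=\alpha/(2-\alpha)>-1$, we get $(2-\alpha+\alpha|x|^2)/2=(1+q|x|^2)/(1+q)$, which is exactly the form \cref{eq. minimizer}. As $\alpha$ ranges over $(-\infty,2)$, $q$ ranges over $(-1,\infty)$.

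Conversely, for $u$ of the form \cref{eq. minimizer} with given $q$, define $\alpha=2q/(1+q)$. Then $u$ is an extremal of $F_{1,\alpha}$, so $F_{1,0}(u)=g(s)$ with $s=1-\alpha/2$. Equality in the deficit bound additionally requires $s=I(u)$, that is, $I(u)=1/(1+q)$.

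The main obstacle is this consistency check. It asks for a direct computation of $I$ on the extremal profile, and a check that $T_\phi$ and the constant $c$ do not change $I$. Invariance under $c$ is clear from the scaling of $I$. Invariance under $T_\phi$ holds because $e^{2T_\phi w}$ and $e^{T_\phi w}$ transform as area and length densities, so both integrals are Möbius invariant. The remaining computation is on $w=-\log\frac{1+q|x|^2}{1+q}$:
\[
\int_{\partial\mathbb{B}^2}e^{w}=2\pi,\qquad \int_{\mathbb{B}^2}e^{2w}=\pi(1+q)^2\int_0^1\frac{dt}{(1+qt)^2}=\pi(1+q).
\]
Hence
\[
I=\frac{4\pi^2}{4\pi\cdot\pi(1+q)}=\frac1{1+q}=1-\frac\alpha2=s,
\]
as required. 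If instead this check failed, one would argue via strict concavity of $g$ that equality forces the extremal for the specific $\alpha$ with $s=I$, which leads to the same verification.
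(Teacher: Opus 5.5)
Your proposal is correct and follows the paper's own proof: you rewrite $F_{1,\alpha}(u)=F_{1,0}(u)+\frac{\alpha}{2}\log I(u)$, maximize the resulting lower bound over $\alpha<2$ (your substitution $s=1-\frac{\alpha}{2}$ makes explicit the paper's maximum at $\alpha=2-2I(u)$), and obtain rigidity from the equality case of \cref{thm. sharp Sobolev-trace inequality}. Your converse check, computing $I(T_\phi w_q+c)=\frac{1}{1+q}$, is the same verification the paper delegates to \cref{lem. F of wq}.
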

    \begin{remark}
        A key ingredient in establishing this deficit estimate is that \cref{thm. sharp Sobolev-trace inequality} holds for every $\alpha<2$.
        This full-range result, in turn, relies essentially on the classification of solutions to \cref{eq. conformal curvature equation} in all three curvature regimes: $K>0$, $K=0$, and $K<0$.
    \end{remark}
    When restricted to functions with vanishing  boundary trace, the deficit estimate reduces to:
    \begin{corollary}\label{cor. sharp Carleson-Chang ineq}
        For $u\in H_0^1(\mathbb{B}^2)$, there holds
        \begin{equation}
        \label{Carleson-Chang improv}
        \frac{1}{4\pi}
        \int_{\mathbb{B}^2}|\nabla u|^2
        +1
        \geq 
        \log\left(\frac{1}{\pi}\int_{\mathbb{B}^2}e^{2u}\right)
        +\frac{\pi}{\int_{\mathbb{B}^2}e^{2u}}.
        \end{equation}
        Moreover, the equality holds if and only if
        \[
        u(x)=-\log
        \left(\frac{1+q|x|^2}{1+q}\right),
        \] 
        for some constant $q>-1$. 
    \end{corollary}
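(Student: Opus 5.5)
Corollary \ref{cor. sharp Carleson-Chang ineq} will be derived by specializing Corollary \ref{cor. stability for LM inequality} to $u\in H_0^1(\mathbb{B}^2)$.

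\textbf{The inequality.} For $u\in H_0^1(\mathbb{B}^2)$ the trace vanishes, so $\int_{\partial\mathbb{B}^2}u=0$ and $\int_{\partial\mathbb{B}^2}e^{u}=2\pi$. Hence
\[
F_{1,0}(u)=\frac{1}{4\pi}\int_{\mathbb{B}^2}|\nabla u|^2 ,
\qquad
I(u)=\frac{4\pi^2}{4\pi\int_{\mathbb{B}^2}e^{2u}}=\frac{\pi}{\int_{\mathbb{B}^2}e^{2u}}.
\]
Write $t=I(u)$. Then
\[
\varphi(t)=t-1-\log t=\frac{\pi}{\int_{\mathbb{B}^2}e^{2u}}-1+\log\left(\frac{1}{\pi}\int_{\mathbb{B}^2}e^{2u}\right).
\]
Substituting into $F_{1,0}(u)\ge\varphi(I(u))$ and moving the constant $1$ to the left side gives \cref{Carleson-Chang improv} exactly.

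\textbf{Equality cases.} By Corollary \ref{cor. stability for LM inequality}, equality holds if and only if $u$ has the form \cref{eq. minimizer} and, in addition, has zero trace on $\partial\mathbb{B}^2$.

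First I check that the stated functions qualify. For $\phi=\mathrm{id}$ and $c=0$, the function $-\log\frac{1+q|x|^2}{1+q}$ vanishes on $|x|=1$. So these lie in $H_0^1(\mathbb{B}^2)$ and attain equality.

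\textbf{Main obstacle: the converse.} I must show that zero trace forces $\phi$ to be a rotation and $c=0$. Rotations leave the radial profile unchanged, so this reduces the family to the one stated. Set $w_q=-\log\frac{1+q|x|^2}{1+q}$. On $|x|=1$ we have $|\phi'|$ positive, and the trace of $T_\phi w_q+c$ is
\[
w_q(\phi(x))+\log|\phi'(x)|+c=\log|\phi'(x)|+c,
\]
because $w_q=0$ on the circle and $\phi$ maps the circle to itself.

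So zero trace means $|\phi'|$ is constant on $\partial\mathbb{B}^2$. Write $\phi(z)=e^{i\theta}\frac{z-a}{1-\bar a z}$. On the circle,
\[
|\phi'(z)|=\frac{1-|a|^2}{|1-\bar a z|^2},
\]
and this is constant in $z$ only when $a=0$. Therefore $\phi$ is a rotation, $|\phi'|\equiv1$, and consequently $c=0$. Then $u=w_q\circ\phi=w_q$ by radial symmetry, which is the claimed extremal family with $q>-1$.
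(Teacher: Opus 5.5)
Your proposal is correct and follows the paper's proof. You specialize the deficit estimate of Corollary~\ref{cor. stability for LM inequality} to zero-trace functions, where $F_{1,0}(u)=\frac{1}{4\pi}\int_{\mathbb{B}^2}|\nabla u|^2$ and $I(u)=\pi/\int_{\mathbb{B}^2}e^{2u}$. Your explicit check that zero trace forces $|\phi'|$ to be constant on $\partial\mathbb{B}^2$, hence $\phi$ a rotation and $c=0$, fills in the rigidity step that the paper only asserts as following from Corollary~\ref{cor. stability inequality}.
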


    One intriguing point is that \cref{Carleson-Chang improv} sharpens the  strict inequality of Carleson and Chang \cite{CC86} 
        \begin{equation}
        \label{eq. Carleson Chang ineq}
            \frac{1}{4\pi}
        \int_{\mathbb{B}^2}|\nabla u|^2
        +1
        >
        \log\left(\frac{1}{\pi}\int_{\mathbb{B}^2}e^{2u}\right),\quad \forall u\in H_0^1(\mathbb{B}^2),
        \end{equation}
        Inequality \cref{eq. Carleson Chang ineq} is a key ingredient in Carleson and Chang's proof of the existence of extremals for the Dirichlet Moser--Trudinger inequality on $\mathbb{B}^2$ \cite{CC86}. More broadly, the concentration estimate derived from it has become an important tool in the compactness and blow-up analysis of critical exponential functionals; see, for example, \cite{Flu92,Yan16,IulaMancini2017}. Thus, our refinement \cref{Carleson-Chang improv} is of independent interest and holds potential  in related critical variational problems.
        
        \vspace{1em}

        {\bf AI disclosure statement:} Artificial intelligence tools were used during the preparation of this manuscript.
        Before any such tools were used, the authors had already proved \cref{ClassificationTh} and conceived the study of the functionals $F_{\lambda,\alpha}$ via the direct method. 
        ChatGPT-5.6 Sol was subsequently used to assist in elaborating the details of the proof of \cref{thm. sharp Sobolev-trace inequality} in the range $\alpha<1$. For $1\leq\alpha<2$, the authors used the model to interpret the relevant literature and guided the interaction toward the Aubin-type improvement stated in \cref{lem. improved weak Moser on ball}.                
         All the context and mathematical proof are entirely written by the authors, who take full responsibility for the manuscript.\\
         
        \noindent\textbf{Organization of the rest of the paper.} In \cref{Class section}, we prove  \cref{ClassificationTh}. In \cref{App results} and \cref{sec. functional ineq for large alpha}, we establish \cref{thm. sharp Sobolev-trace inequality} for $\alpha<1$ and $1\leq \alpha<2$, respectively.
        Finally, in \cref{sec. stability ineq for LM ineq},
        we prove \cref{cor. stability for LM inequality}.

\section{Classification results}\label{Class section}
In this section, we restate \cref{ClassificationTh} in a more precise form, treating separately the cases of positive, zero, and negative Gaussian curvature.
\begin{theorem}
    Let $u$ be a smooth solution to the equation
    \cref{eq. conformal curvature equation}
for some constants $K,k\in \mathbb{R}$ and  $0< \lambda\leq 1$. Then the following results hold.
\begin{enumerate}
     \item If $K>0$, 
        then, for any $k\in\mathbb{R}$, the solution is given by
        \begin{align*}
    u(x)=
    \begin{cases}
        -\log\left(\sqrt{K}
    \left(\frac{\epsilon_1}{2}+\frac{1}{2\epsilon_1}|x|^2\right)
    \right),&\text{if }0<\lambda<1,\\[3mm]
    \displaystyle{-\log\left(
    \sqrt{K}
    \frac{(1+\epsilon_2^2|a|^2)|x|^2
    +2(1+\epsilon_2^2)\langle x,a\rangle
    +\epsilon_2^2+|a|^2}{2\epsilon_2(1-|a|^2)}
    \right)}, &\text{if }\lambda=1,
    \end{cases}
        \end{align*}
where $\epsilon_1=\frac{k+\sqrt{k^2+\lambda(2-\lambda)K}}{\lambda\sqrt{K}}$, $\epsilon_2=\frac{k+\sqrt{k^2+K}}{\sqrt{K}}$ and $a\in\mathbb{B}^2$.
        \item If $K=0$, then $k>0$ and the solution is given by
        \begin{align*}
            u(x)=
            \begin{cases}
            -\log\frac{k}{\lambda} &\text{if }0<\lambda<1,\\[3mm]
                \displaystyle{-\log\left(
                k\frac{|a|^2|x|^2+2\langle x,a\rangle+1}{1-|a|^2}
                \right)} & \text{if }\lambda=1.
            \end{cases}
        \end{align*}
        for some $a\in \mathbb{B}^2$.
        \item If $K<0$, then 
        \begin{align*}
        \begin{cases}
            k\geq \sqrt{-\lambda(2-\lambda)K} \quad&\text{if }0<\lambda<1,\\[2mm]
            k>\sqrt{-K} &\text{if }\lambda=1,
        \end{cases}
        \end{align*}
        and the solution is given by
        \begin{align}\label{eq. solution for negative K small lambda}
            u(x)=
            \begin{cases}
               \displaystyle{ -\log\left(
                \sqrt{-K}
                \Big(\frac{\epsilon_3}{2}-\frac{1}{2\epsilon_3}|x|^2\Big)
                \right)} &\text{if } 0<\lambda<1,
                 k\in[\sqrt{-\lambda(2-\lambda)K},\sqrt{-K}),\\[3mm]
            \displaystyle{-\log\left(
                \sqrt{-K}
        \Big(\frac{\epsilon_4}{2}-\frac{1}{2\epsilon_4}|x|^2\Big)
                \right)} &\text{if } 0<\lambda<1,
                 k\in[\sqrt{-K},+\infty),\\[3mm]
                -\log\left(
                    \sqrt{-K}
    \frac{(\epsilon_5^2|a|^2-1)|x|^2
    +2(\epsilon_5^2-1)\langle x,a\rangle
    +\epsilon_5^2-|a|^2}{2\epsilon_5(1-|a|^2)}
                \right) &\text{if }\lambda=1.
            \end{cases}
        \end{align}
    where $\epsilon_3$ could be chosen from $\left\{
    \frac{k+\sqrt{k^2+\lambda(2-\lambda)K}}
    {\lambda\sqrt{-K}},
    \frac{k-\sqrt{k^2+\lambda(2-\lambda)K}}
    {\lambda\sqrt{-K}}   \right\}$, $\epsilon_4=\frac{k+\sqrt{k^2+\lambda(2-\lambda)K}}
    {\lambda\sqrt{-K}}$, $\epsilon_5=\frac{k+\sqrt{k^2+K}}{\sqrt{-K}}$ and $a\in\mathbb{B}^2$.
\end{enumerate}
\end{theorem}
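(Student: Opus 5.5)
The plan is a boundary-adapted P-function argument in the spirit of Obata and Escobar, applied to the metric function $v=e^{-u}$. If $u$ solves \cref{eq. conformal curvature equation}, then $v>0$ satisfies
\[
v\Delta v-|\nabla v|^2=K \ \text{ in }\mathbb{B}^2,\qquad \partial_\nu v=\lambda v-k \ \text{ on }\partial\mathbb{B}^2.
\]
The model solutions are exactly the quadratics $v=\frac{\sqrt{|K|}}{2}\bigl(\epsilon\pm\epsilon^{-1}|x|^2\bigr)$, up to Möbius action when $\lambda=1$. For these the traceless Hessian $\mathring{\nabla}^2v=\nabla^2v-\frac12\Delta v\, I$ vanishes. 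So the goal is to prove $\mathring{\nabla}^2 v\equiv0$.

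The first step is the Bochner-type identity. I would differentiate the interior equation and set $P:=\frac{|\nabla v|^2+K}{v}=\Delta v$. One gets $\nabla\Delta v=\frac{1}{v}\bigl(2\nabla^2v\,\nabla v-\Delta v\,\nabla v\bigr)=\frac{2}{v}\mathring{\nabla}^2v\,\nabla v$. Then consider the divergence-form quantity
\[
\operatorname{div}\Bigl(\frac{1}{v}\bigl(\nabla^2 v\,\nabla v-\tfrac12\Delta v\nabla v\bigr)\Bigr)=\frac{|\mathring{\nabla}^2 v|^2}{v}+(\text{terms that cancel using the equation}).
\]
Equivalently, integrate $\frac{1}{v}|\mathring\nabla^2 v|^2$ and show that
\[
\int_{\mathbb{B}^2}\frac{|\mathring{\nabla}^2v|^2}{v}=\oint_{\partial\mathbb{B}^2}\frac{1}{v}\mathring{\nabla}^2v(\nabla v,\nu).
\]
The second step, the boundary computation, is the heart of the matter. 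Write $\nabla v=v_\theta\,\tau+v_\nu\,\nu$ on the unit circle. Use $\partial_\nu v=\lambda v-k$ and differentiate it tangentially: $v_{\nu\theta}$ is expressed via $\lambda v_\theta$ and the curvature of the circle, using $\nabla^2v(\tau,\nu)=\partial_\theta v_\nu-v_\theta$. Also use $\nabla^2 v(\tau,\tau)=v_{\theta\theta}+v_\nu$. This gives the boundary integrand as a quadratic expression in $v_\theta, v_{\theta\theta}$ with $v_\nu=\lambda v-k$. After integrating by parts in $\theta$, I expect it to reduce to
\[
-(1-\lambda)\oint \frac{c\,v_\theta^2}{v}\ (\text{something nonnegative})\ \le 0,
\]
with exact vanishing when $\lambda=1$. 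This reflects conformal invariance: at $\lambda=1$ the quantity is Möbius invariant and no further information arises. The sign of this boundary term is the main obstacle. I would first try inserting a weight $v^{-s}$ or adding a multiple of $\oint v_\theta^2/v$ to make it a perfect square, possibly using the boundary restriction of the interior equation, $v(v_{\theta\theta}+v_\nu+v_{\nu\nu})=v_\theta^2+v_\nu^2+K$.

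The third step is the conclusion. Once $\mathring\nabla^2 v\equiv0$, $v$ is a radial-type quadratic $v=A|x|^2+\langle b,x\rangle+C$ with $4AC-|b|^2=K$ from the interior equation. For $\lambda<1$, the strict negativity of the boundary term forces $v_\theta\equiv0$, so $b=0$. The boundary condition then gives $2A=\lambda(A+C)-k$, and together with $4AC=K$ this is a quadratic in $\epsilon$. Its solvability and positivity of $v$ on $\overline{\mathbb{B}^2}$ produce the listed $\epsilon_1,\epsilon_3,\epsilon_4$ and the constraints: $k>0$ for $K=0$, and $k\ge\sqrt{-\lambda(2-\lambda)K}$ for $K<0$. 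The two roots are admissible only when $k<\sqrt{-K}$, since otherwise $v$ vanishes inside. For $\lambda=1$, $b$ is free, and one parametrizes by $a\in\mathbb{B}^2$, matching $T_\phi$ of the radial solution. Here the constraint $k>\sqrt{-K}$ comes from positivity of $v$ on the closed disk. Signs of $K,k$ enter only in this algebra.
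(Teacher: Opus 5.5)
Your Step 1 is correct. With $A:=\nabla^2 v-\frac{\Delta v}{2}g_0$ one has $\frac12\Delta P=v^{-1}|A|^2$ and $\frac12\nabla P=v^{-1}A(\nabla v,\cdot)$, so $\int_{\mathbb{B}^2}v^{-1}|A|^2=\oint f^{-1}A(\nabla v,\nu)$. Your tangential computation $A(\tau,\nu)=(\lambda-1)f_\theta$ also correctly yields the good term $(\lambda-1)\oint f^{-1}f_\theta^2$.

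The gap is the remaining term $\oint f^{-1}(\lambda f-k)A(\nu,\nu)$, and it cannot be handled the way you propose. Insert $v_\nu=\chi:=\lambda f-k$ and eliminate $v_{\nu\nu}$ with the restricted interior equation. You get $A(\nu,\nu)=\frac{K+f_\theta^2+\chi^2}{2f}-\chi-f_{\theta\theta}$, so the whole boundary term becomes an explicit functional $B(f)$ of the trace alone. Any manipulation that uses only these pointwise boundary relations and integration by parts in $\theta$ (weights, adding multiples of $\oint f^{-1}f_\theta^2$, completing squares) gives an identity valid for every positive $2\pi$-periodic $f$. It would therefore prove $B(f)\le 0$ for all such $f$, and that is false. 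For $f\equiv c$ one gets $B(c)=\frac{\pi\chi}{c^2}\bigl(K+\chi^2-2c\chi\bigr)$. For $K=1$, $k=0$, $\lambda=\frac12$ this is $\frac{\pi}{2c}\bigl(1-\frac34 c^2\bigr)>0$ whenever $0<c<2/\sqrt3$; it vanishes exactly at the true boundary value $c=2/\sqrt3$. Likewise $B$ does not vanish identically at $\lambda=1$, so your expectation of exact vanishing there is also unfounded. The sign can only come from the fact that $f$ is the trace of a global solution, i.e. from interior information your plan never brings in.

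The paper supplies this through Pohozaev-type identities attached to the conformal field $\nabla w=-x$, with $w=\frac{1-|x|^2}{2}$. This $w$ satisfies $\nabla^2 w=\frac{\Delta w}{2}g_0$, $w=0$ and $\nabla w=-\nu$ on $\partial\mathbb{B}^2$. By \cref{lem. Cai26}, $\operatorname{div}\bigl(v^{-1}(\nabla_{\nabla w}\nabla v-\frac{\Delta v}{2}\nabla w)\bigr)=0$. This gives $\oint f^{-1}A(\nu,\nu)=0$, which kills the $k$-term. A second integration gives $\frac12\int_{\mathbb{B}^2}w\,\Delta P=\oint A(\nu,\nu)$, so the $\lambda$-term equals $\lambda\int_{\mathbb{B}^2}w\,v^{-1}|A|^2\ge 0$.

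Hence your boundary term is really $(\lambda-1)\oint f^{-1}f_\theta^2+\lambda\int_{\mathbb{B}^2}w\,v^{-1}|A|^2$. It is not nonpositive by itself, and at $\lambda=1$ it equals $\int_{\mathbb{B}^2} w\,v^{-1}|A|^2$, not $0$. The argument closes only after moving this positive interior piece to the left. That gives $\int_{\mathbb{B}^2}(1-\lambda w)\,v^{-1}|A|^2=(\lambda-1)\oint f^{-1}f_\theta^2$, with $1-\lambda w\ge\frac12$. Once this identity is available, your Step 3 algebra (quadratic $v$, positivity constraints, Möbius parametrization for $\lambda=1$) goes through essentially as you describe.
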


Before presenting the details of the proof, we illustrate our overall strategy as follows. We choose an appropriate auxiliary P-function $P$ and derive an integral identity for $\Delta P$; see \cref{eq. integral of Delta P}. The resulting troublesome  boundary terms are then tackled by combining a weight function $w$ and $\Delta P$; see \cref{eq. first boundary term} and \cref{eq. second boundary term}. For the stated range of $\lambda$, this argument forces the P-function to be constant, from which the classification follows.

\begin{proof}
    {\bf Step 1:}
    Let $v:=e^{-u}$ and $\chi:=\frac{\partial v}{\partial\nu}|_{\partial\mathbb{B}^2}, f:=v|_{\partial\mathbb{B}^2}$. Then the equation \cref{eq. conformal curvature equation} is equivalent to 
\begin{align}\label{eq. equation of v}
\begin{cases}
    \Delta v=v^{-1}(K+|\nabla v|^2) &\text{in }\mathbb{B}^2,\\[2mm]
        \chi=\lambda f-k &\text{on }\partial\mathbb{B}^2.
\end{cases}
\end{align}
Define the P-function
\begin{align}\label{eq. P function}
    P:=v^{-1}(K+|\nabla v|^2)=\Delta v.
\end{align}
Then, by the Bochner formula, we have
    \begin{align*}
        v\Delta P+P\Delta v+2\langle\nabla P,\nabla v\rangle
        =&2\left(
        \left|\nabla^2v-\frac{\Delta v}{2}g_0\right|^2
        +\frac{1}{2}(\Delta v)^2
        +\langle\nabla\Delta v,\nabla v\rangle
        \right)\\
        =&2\left|\nabla^2v-\frac{\Delta v}{2}g_0\right|^2
        +P\Delta v+2\langle\nabla P,\nabla v\rangle.
    \end{align*}
    Rearranging it, we get
\begin{align}\label{eq. Delta P}
    \Delta P=2v^{-1}
    \left|\nabla^2 v-\frac{\Delta v}{2}g_0\right|^2\geq 0.
\end{align}
One key observation from \cref{eq. P function} is that the gradient of the P-function is closely related to the trace-less Hessian of $v$:
\begin{align}\label{eq. gradient P}
    \frac{1}{2}\nabla P
    =\frac{1}{2}\nabla\Delta v
    =v^{-1}
    \left(
    \nabla^2 v-\frac{\Delta v}{2}g_0
    \right)(\nabla v,\cdot).
\end{align}
Denote  $\nu$ to be the unit outer normal vector of $\partial \mathbb{B}^2$. Then by \cref{eq. equation of v} and \cref{eq. gradient P}, we have
\begin{align}
    \frac{1}{2}\int_{\mathbb{B}^2}\Delta P
    &= \frac{1}{2}\int_{\partial\mathbb{B}^2}
    \langle\nabla P,\nu\rangle
    =\int_{\partial\mathbb{B}^2}
    f^{-1}\left(
    \nabla^2 v-\frac{\Delta v}{2}g_0
    \right)(\nabla f+\chi\nu,\nu)\notag\\
    &= \int_{\partial\mathbb{B}^2}
    f^{-1}\langle\nabla_{\nabla f}\nabla v,\nu\rangle
    +f^{-1}\chi
    \left(
    \nabla^2 v-\frac{\Delta v}{2}g_0
    \right)(\nu,\nu)\notag
    \\
    &= \int_{\partial\mathbb{B}^2}
    f^{-1}\left(\langle\nabla f,\nabla \chi\rangle-|\nabla f|^2\right)
    +f^{-1}(\lambda f-k)
    \left(
    \nabla^2 v-\frac{\Delta v}{2}g_0
    \right)(\nu,\nu)\notag\\
    &= (\lambda-1)\int_{\partial\mathbb{B}^2}
    f^{-1}|\nabla f|^2
    +\lambda\int_{\partial\mathbb{B}^2}
    \left(
    \nabla^2 v-\frac{\Delta v}{2}g_0
    \right)(\nu,\nu)\notag\\
    &
    \hspace{4.2cm}-k\int_{\partial\mathbb{B}^2}
    f^{-1}\left(
    \nabla^2 v-\frac{\Delta v}{2}g_0
    \right)(\nu,\nu).\label{eq. integral of Delta P}
\end{align}

\noindent {\bf Step 2:} 
To tackle the last two terms in the above equation, we recall  \cite[Lemma 3.2]{Cai26}, which played a crucial role in classifying the solutions to the higher dimensional analogous equation of \cref{eq. conformal curvature equation}:
\begin{lemma}[\cite{Cai26}]\label{lem. Cai26}
    Let $(M^n,g)$ be a Riemannian manifold. Assume  that $(M^n,g)$ admits a smooth function $w$ such that $\nabla w$ is a closed conformal vector field on $M$, i.e. $\nabla ^2 w=\frac{\Delta w}{n}g$, then for any $v\in C^{\infty}(M)$ and constant $d>0$ there holds
    \begin{align*}
        \mathrm{div}\left(\nabla_{\nabla w}\nabla v-\frac{\Delta v}{d}\nabla w\right)
        =\left(\frac{1}{n}-\frac{1}{d}\right)\Delta v\Delta w
        -\left(1-\frac{1}{n}\right)\langle\nabla \Delta w,\nabla v\rangle
        +\left(1-\frac{1}{d}\right)\langle\nabla \Delta v,\nabla w\rangle.
    \end{align*}
\end{lemma}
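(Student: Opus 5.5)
The identity is a pointwise tensor computation, and I would verify it directly in local coordinates, with indices raised and lowered by $g$. I write $w_i=\nabla_i w$, $v_{ij}=\nabla_j\nabla_i v$, and so on. The vector field in question has components $X^i=w_j v_{ji}-\frac{1}{d}\,\Delta v\, w_i$, since $(\nabla_{\nabla w}\nabla v)_i = w_j\nabla_j\nabla_i v = w_j v_{ij}$ and $\nabla^2 v$ is symmetric. I will compute $\nabla_i X^i$ term by term.

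For the second piece the computation is immediate:
\[
\nabla_i\Bigl(\tfrac{1}{d}\Delta v\, w_i\Bigr)=\tfrac{1}{d}\bigl(\langle\nabla\Delta v,\nabla w\rangle+\Delta v\,\Delta w\bigr).
\]
For the first piece, the product rule gives $\nabla_i(w_j v_{ij}) = w_{ij}v_{ij} + w_j\nabla_i\nabla_i\nabla_j v$. The hypothesis $\nabla^2 w=\frac{\Delta w}{n}g$ turns the first summand into $\frac{1}{n}\Delta w\,\Delta v$. For the second summand I commute covariant derivatives, using the Ricci identity $\nabla_i\nabla_i\nabla_j v=\nabla_j\Delta v+\mathrm{Ric}_{jk}v_k$. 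This gives
\[
w_j\nabla_i\nabla_i\nabla_j v=\langle\nabla\Delta v,\nabla w\rangle+\mathrm{Ric}(\nabla w,\nabla v).
\]

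The one non-routine point, and the main obstacle, is eliminating the curvature term $\mathrm{Ric}(\nabla w,\nabla v)$, since the manifold is arbitrary. The closed conformal condition controls this term. I take the divergence of $\nabla^2 w=\frac{\Delta w}{n}g$ in two ways. Directly, $\nabla_i w_{ij}=\frac{1}{n}\nabla_j\Delta w$. By the same Ricci commutation applied to $w$, $\nabla_i w_{ij}=\nabla_j\Delta w+\mathrm{Ric}_{jk}w_k$. Comparing the two expressions yields
\[
\mathrm{Ric}(\nabla w,\cdot)=-\Bigl(1-\tfrac1n\Bigr)\nabla\Delta w,
\]
and therefore $\mathrm{Ric}(\nabla w,\nabla v)=-(1-\frac1n)\langle\nabla\Delta w,\nabla v\rangle$.

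Collecting terms gives
\[
\mathrm{div}\,X=\tfrac1n\Delta v\Delta w+\langle\nabla\Delta v,\nabla w\rangle-\Bigl(1-\tfrac1n\Bigr)\langle\nabla\Delta w,\nabla v\rangle-\tfrac1d\langle\nabla\Delta v,\nabla w\rangle-\tfrac1d\Delta v\Delta w .
\]
This is exactly the claimed right-hand side, with coefficients $(\frac1n-\frac1d)$, $-(1-\frac1n)$ and $(1-\frac1d)$. The constant $d$ enters only linearly, so the only role of $d>0$ is that $1/d$ is defined. In the flat setting of $\mathbb{B}^2$ used later, the Ricci term vanishes identically and the verification is simpler still.
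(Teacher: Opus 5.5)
Your computation is correct and complete. Two ingredients do the work:
\begin{itemize}
\item the contracted Ricci identity $\mathrm{div}(\nabla^2 f)=\nabla\Delta f+\mathrm{Ric}(\nabla f,\cdot)$, applied once to $v$ and once to $w$;
\item the hypothesis in the form $\mathrm{div}\bigl(\tfrac{\Delta w}{n}g\bigr)=\tfrac1n\nabla\Delta w$, which gives $\mathrm{Ric}(\nabla w,\cdot)=-(1-\tfrac1n)\nabla\Delta w$.
\end{itemize}
With these, the coefficients $(\tfrac1n-\tfrac1d)$, $-(1-\tfrac1n)$ and $(1-\tfrac1d)$ come out exactly as stated.

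The paper gives no proof of this lemma; it quotes it from \cite{Cai26}. So there is no argument in the paper to compare yours against.

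Two remarks, neither of which is needed for correctness. First, your argument does not depend on the sign convention for the curvature commutator. The same commutation formula is used for both $v$ and $w$, and by the symmetry of $\mathrm{Ric}$ only the combination $\sigma\,\mathrm{Ric}(\nabla w,\nabla v)$ enters, with $\sigma=\pm1$ the sign in that formula. Second, in the paper's application the metric is flat, $n=d=2$ and $\Delta w\equiv-2$. There the first two terms on the right vanish, and the identity reduces to \cref{eq. auxiliary divergence}.
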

\vspace{0.2 cm}
Consider the weight function
    \begin{align*}
        w(x):=\frac{1-|x|^2}{2},
    \end{align*}
    which satisfies
    \begin{align*}
        w|_{\partial \mathbb{B}^2}\equiv 0,\quad 
        \nabla w|_{\partial \mathbb{B}^2}
        =-\nu,\quad 
        \Delta w\equiv-2,\quad
        \nabla^2 w=\frac{\Delta w}{2}g_0.
    \end{align*}

By \cref{lem. Cai26} and \cref{eq. gradient P}, we have
\begin{align}\label{eq. auxiliary divergence}
    \mathrm{div}\left(\nabla_{\nabla w}\nabla v-\frac{\Delta v}{2}\nabla w\right)
    =\frac{1}{2}\langle\nabla \Delta v,\nabla w\rangle
    =v^{-1}\left(
    \nabla^2 v-\frac{\Delta v}{2}g_0
    \right)(\nabla v,\nabla w).
\end{align}
It follows from \cref{eq. auxiliary divergence} that
\begin{align*}
    &\mathrm{div}
    \left(v^{-1}
    \left(\nabla_{\nabla w}\nabla v-\frac{\Delta v}{2}\nabla w\right)\right)\\
    &= v^{-1}\mathrm{div}\left(\nabla_{\nabla w}\nabla v-\frac{\Delta v}{2}\nabla w\right)
    -v^{-2}
    \left\langle
    \nabla_{\nabla w}\nabla v-\frac{\Delta v}{2}\nabla w,\nabla v
    \right\rangle=0.
\end{align*}
Integrating it over $\mathbb{B}^2$, we get
\begin{align}\label{eq. first boundary term}
    \int_{\partial\mathbb{B}^2}f^{-1}
    \left(
    \nabla^2 v-\frac{\Delta v}{2}g_0
    \right)(\nu,\nu)
    =0.
\end{align}

Once again, \cref{lem. Cai26}, \cref{eq. gradient P} and \cref{eq. auxiliary divergence} imply that
\begin{align*}
    \frac{1}{2}w\Delta P
    &= w\,\mathrm{div}
    \left(
    v^{-1}\left(
    \nabla_{\nabla v}\nabla v-\frac{\Delta v}{2}\nabla v
    \right)
    \right)\\[2mm]
    & = \mathrm{div}\left(wv^{-1}\left(
    \nabla_{\nabla v}\nabla v-\frac{\Delta v}{2}\nabla v
    \right)\right)
    -v^{-1}
    \left\langle
    \nabla_{\nabla v}\nabla v-\frac{\Delta v}{2}\nabla v,\nabla w
    \right\rangle\\[2mm]
    &= \mathrm{div}\left(wv^{-1}\left(
    \nabla_{\nabla v}\nabla v-\frac{\Delta v}{2}\nabla v
    \right)\right)
    -\mathrm{div}\left(\nabla_{\nabla w}\nabla v-\frac{\Delta v}{2}\nabla w\right).
\end{align*}
Integrating it over $\mathbb{B}^2$, we get
\begin{align}\label{eq. second boundary term}
    \frac{1}{2}\int_{\mathbb{B}^2}w\Delta P
     =\int_{\partial \mathbb{B}^2}
    \left(
    \nabla^2 v-\frac{\Delta v}{2}g_0
    \right)(\nu,\nu).
\end{align}
Combining \cref{eq. Delta P}, \cref{eq. integral of Delta P}, \cref{eq. first boundary term} and \cref{eq. second boundary term}, we obtain our key integral identity:
\begin{align}\label{eq. key integral identity}
    0\leq
    \int_{\mathbb{B}^2}(1-\lambda w)
    v^{-1}\left|
    \nabla^2 v-\frac{\Delta v}{2}g_0
    \right|^2
    =(\lambda-1)\int_{\partial\mathbb{B}^2}f^{-1}|\nabla f|^2\leq 0.
\end{align}
 
\noindent{\bf Step 3:} 
If $0<\lambda<1$, then \cref{eq. key integral identity} implies that
\begin{align*}
    \nabla^2 v=\frac{\Delta v}{2}g_0\quad 
    \text{in }\mathbb{B}^2,\qquad\quad 
    v\equiv C\quad  \text{on }\partial\mathbb{B}^2.
\end{align*}
It follows from Ricci's identity that
\begin{align*}
    \nabla \Delta v
    =\mathrm{div}(\nabla^2v)
    =\mathrm{div}\left(\frac{\Delta v}{2}g_0\right)
    =\frac{\nabla \Delta v}{2}.
\end{align*}
Therefore, $\Delta v$ is a constant in $\mathbb{B}^2$. Combining with $v|_{\partial\mathbb{B}^2}\equiv C$, we could set 
\begin{align*}
    v(x)=r|x|^2+s
\end{align*}
for two constants $r,s$. Then, equation \cref{eq. equation of v} reduces to
\begin{equation}
\label{eq. quadratic equation for rs}
    \begin{cases}
        4rs=K, &\text{in } \mathbb{B}^2\\[2mm]
        (2-\lambda)r+k=\lambda s &\text{on } \partial \mathbb{B}^2.
    \end{cases}
\end{equation}
A formal calculation yields
\begin{equation}
    \begin{cases}
        r=\frac{-k+\sqrt{k^2+\lambda(2-\lambda)K}}{2(2-\lambda)}\\[2mm]
        s=\frac{k+\sqrt{k^2+\lambda(2-\lambda)K}}{2\lambda}
    \end{cases} \label{solution with plus sign}
\end{equation}
or
\begin{equation}
    \begin{cases}
        r=\frac{-k-\sqrt{k^2+\lambda(2-\lambda)K}}{2(2-\lambda)}\\[2mm]
        s=\frac{k-\sqrt{k^2+\lambda(2-\lambda)K}}{2\lambda}
    \end{cases}\label{solution with minus sign}.
\end{equation}
Now, we discuss what restriction on $K$ and $k$ the existence of a solution to \cref{eq. equation of v} would impose.
The positivity of $v$ and the nonnegativity of the discriminant could be  written as
\begin{align}\label{eq. conditions on positivity of v}
    s>0,\quad r+s>0,\quad 
    \text{and } \quad k^2\geq -\lambda(2-\lambda)K.
\end{align}
\begin{itemize}
    \item  If $K>0$, then \cref{eq. conditions on positivity of v} excludes  \cref{solution with minus sign}
for any $k\in\mathbb{R}$. Therefore, we achieve the classification of $v$:
\begin{align*}
    v(x)=\sqrt{K}\left(\frac{1}{2\epsilon}|x|^2+\frac{\epsilon}{2}\right),
\end{align*}
where $\epsilon=\frac{k+\sqrt{k^2+\lambda(2-\lambda)K}}{\lambda\sqrt{K}}$.

\item  If $K=0$, then the only meaningful solutions to \cref{eq. equation of v} exist only for $k>0$ and are given by
\begin{align*}
    r=0,\quad s=\frac{k}{\lambda}.
\end{align*}
Therefore, we achieve the classification of $v$:
\begin{align*}
    v(x)\equiv \frac{k}{\lambda}.
\end{align*}
\item 
If $K<0$, we note that $k$ must be positive. Otherwise, in any case of \cref{solution with plus sign} and \cref{solution with minus sign}, there always holds
\begin{align*}
    s<\frac{k+|k|}{2\lambda}\leq 0,
\end{align*}
which contradicts \cref{eq. conditions on positivity of v}. Then there are two sub-cases, depending on the value of $k$ due to \cref{eq. conditions on positivity of v}:
\begin{itemize}
    \item  If $k^2\in[-\lambda(2-\lambda)K,-K)$, then both \cref{solution with plus sign} and \cref{solution with minus sign} fulfill \cref{eq. conditions on positivity of v}. Therefore, the solution $v$ is given by
    \begin{align*}
        v(x)=\sqrt{-K}
        \left(\frac{\epsilon}{2}-\frac{1}{2\epsilon}|x|^2\right),
    \end{align*}
    where $\epsilon=
    \frac{k+\sqrt{k^2+\lambda(2-\lambda)K}}
    {\lambda\sqrt{-K}}$ or $\epsilon=
    \frac{k-\sqrt{k^2+\lambda(2-\lambda)K}}
    {\lambda\sqrt{-K}}$.
    \item If $k^2\in[-K,+\infty)$, then \cref{solution with minus sign} fails to fulfill $r+s>0$. Therefore, the solution $v$ is given by
    \begin{align*}
       v(x)=\sqrt{-K}
        \left(\frac{\epsilon}{2}-\frac{1}{2\epsilon}|x|^2\right),
    \end{align*}
    where $\epsilon=
    \frac{k+\sqrt{k^2+\lambda(2-\lambda)K}}
    {\lambda\sqrt{-K}}$.
\end{itemize}
\end{itemize}

\noindent{\bf Step 4:} Now we consider the case $\lambda=1$. \cref{eq. key integral identity} implies that
\begin{align*}
    \nabla^2 v=\frac{\Delta v}{2}g_0\quad 
    \text{in }\mathbb{B}^2,
\end{align*}
while $v|_{\partial\mathbb{B}^2}$ is not necessarily constant. As before, we could derive that $\Delta v$ is a constant in $\mathbb{B}^2$. Therefore, we set
\begin{align*}
    v(x)=r|x|^2+\langle x,\xi\rangle+s
\end{align*}
for two constants $r,s$ and $\xi\in\mathbb{R}^2$. Then, equation \cref{eq. equation of v} reduces to 
\begin{align*}
    \begin{cases}
        4rs=K+|\xi|^2 
        &\text{in }\mathbb{B}^2,\\[2mm]
        r+k=s&\text{on }\partial\mathbb{B}^2.
    \end{cases}
\end{align*}
Then the formal solutions are given by: 
\begin{align}
    &\begin{cases}
        r=\frac{-k+\sqrt{k^2+K+|\xi|^2}}{2}\\[2mm]
        s=\frac{k+\sqrt{k^2+K+|\xi|^2}}{2}
    \end{cases}\label{solution with plus sign lambda equal 1}\\
    \quad \text{and }\quad\nonumber\\
    &\begin{cases}
        r=\frac{-k-\sqrt{k^2+K+|\xi|^2}}{2}\\[2mm]
        s=\frac{k-\sqrt{k^2+K+|\xi|^2}}{2}
    \end{cases}.\label{solution with minus sign lambda equal 1}
\end{align}
We note that \cref{solution with minus sign lambda equal 1} could be excluded by the positivity of $v$ at the boundary point $-\frac{\xi}{|\xi|}$, i.e.
\begin{align*}
    r+s-|\xi|=v\left(-\frac{\xi}{|\xi|}\right)>0.
\end{align*}

\begin{itemize}

\item If $K>0$, then  \cref{solution with plus sign lambda equal 1} is well-defined for all $k\in\mathbb{R}$ and the corresponding solution $v$ is positive. We note that there exists a unique $a\in\mathbb{B}^2$ such that
\begin{align*}
    \xi=2\sqrt{k^2+K}\frac{a}{1-|a|^2}.
\end{align*}
Then we have
\begin{align*}
\begin{cases}
    r=\frac{-k+\sqrt{k^2+K}\frac{1+|a|^2}{1-|a|^2}}{2}
    =\sqrt{K}\frac{1+\epsilon^2 |a|^2}{2\epsilon(1-|a|^2)},\\[2mm]
    s=\frac{k+\sqrt{k^2+K}\frac{1+|a|^2}{1-|a|^2}}{2}
    =\sqrt{K}\frac{\epsilon^2+ |a|^2}{2\epsilon(1-|a|^2)},\\[2mm]
    \xi=\sqrt{K}\frac{(1+\epsilon^2)a}{\epsilon(1-|a|^2)},
\end{cases}
\end{align*}
where $\epsilon=\frac{k+\sqrt{k^2+K}}{\sqrt{K}}$.
Hence, the solution $v$ is given by
\begin{align*}
    v(x)=\sqrt{K}
    \frac{(1+\epsilon^2|a|^2)|x|^2
    +2(1+\epsilon^2)\langle x,a\rangle
    +\epsilon^2+|a|^2}{2\epsilon(1-|a|^2)}
\end{align*}
for some $a\in\mathbb{B}^2$.

    \item If $K=0$, applying the maximum principle to \cref{eq. conformal curvature equation}, we know that there exists no positive solution $v$ to \cref{eq. equation of v} for $k\leq 0$.  Therefore, by setting 
\begin{align*}
    \xi=2k\frac{a}{1-|a|^2}
\end{align*}
for some $a\in\mathbb{B}^2$, we derive the expression of $v$:
\begin{align*}
    v(x)=k\frac{|a|^2|x|^2+2\langle x,a\rangle+1}{1-|a|^2}.
\end{align*}
It is straightforward to see that $v$ is a positive solution to \cref{eq. equation of v} for any $k>0$.

\item If $K<0$, first note that
\begin{align*}
    0<v\left(-\frac{\xi}{|\xi|}\right)
    =r+s-|\xi|
    =\frac{k^2+K}{\sqrt{k^2+K+|\xi|^2}+|\xi|},
\end{align*}
so we have 
\begin{align}\label{restriction on k}
    k^2+K>0.
\end{align}
Next, we claim that $k>0$. Otherwise, it follows from \cref{restriction on k} that
\begin{align*}
    \left|-\frac{\xi}{2r}\right|
    =\frac{|\xi|}{\sqrt{k^2+K+|\xi|^2}-k}<1.
\end{align*}
Therefore,
\begin{align*}
    0<v\left(
    -\frac{\xi}{\sqrt{k^2+K+|\xi|^2}-k}
    \right)
    =\frac{K}{2(\sqrt{k^2+K+|\xi|^2}-k)}<0.
\end{align*}
That is a contradiction. As before, we set
\begin{align*}
    \xi=2\sqrt{k^2+K}\frac{a}{1-|a|^2}
\end{align*}
for some $a\in\mathbb{B}^2$.
Then we have
\begin{align*}
    \begin{cases}
        r=\frac{-k+\sqrt{k^2+K}\frac{1+|a|^2}{1-|a|^2}}{2}
    =\sqrt{-K}\frac{\epsilon^2 |a|^2-1}{2\epsilon(1-|a|^2)},\\[2mm]
    s=\frac{k+\sqrt{k^2+K}\frac{1+|a|^2}{1-|a|^2}}{2}
    =\sqrt{-K}\frac{\epsilon^2- |a|^2}{2\epsilon(1-|a|^2)},\\[2mm]
    \xi=\sqrt{-K}\frac{(\epsilon^2-1)a}{\epsilon(1-|a|^2)},
    \end{cases}
\end{align*}
where $\epsilon
=\frac{k+\sqrt{k^2+K}}{\sqrt{-K}}$.
Hence, the solution $v$ is given by
\begin{align*}
    v(x)=\sqrt{-K}
    \frac{(\epsilon^2|a|^2-1)|x|^2
    +2(\epsilon^2-1)\langle x,a\rangle
    +\epsilon^2-|a|^2}{2\epsilon(1-|a|^2)}
\end{align*}
for some $a\in\mathbb{B}^2$. 
\end{itemize}

This completes the classification of the solutions.
\end{proof}
\begin{remark}
    As we mentioned in the introduction, our treatment unifies all possible signs of $K$ and $k$.
    This uniformity is made possible by the key identity 
    \cref{eq. gradient P}.
\end{remark}
\begin{remark}\label{RK comp with wang}
    We note that, in the case $K=0$ and $k>0$, Wang \cite{Wang2017} classified all solutions to \cref{eq. conformal curvature equation} using a pointwise maximum-principle argument. Although Wang's method continues to work on more general background surface, it does not appear to extend to the classification of solutions to \cref{eq. conformal curvature equation} on $\mathbb{B}^2$ for $K\not=0$. Indeed, a direct adaptation of his argument yields only the inequality
\begin{equation}
\label{eq. wang argument}
S_K^2\leq -\frac{K}{2}S_K,
\end{equation}
 where 
    \[
    S_K:=ff''+
    \left(\lambda f-\frac{\chi}{2}\right)\chi
    -\frac{K}{2},
    \]
Here all quantities are evaluated at a boundary point where the P-function attains its maximum, and we follow the notation used in the proof of \cite[Theorem 6]{Wang2017}.

When $K=0$, \cref{eq. wang argument} forces $S_K=0$, which in turn implies that the outward normal derivative of the P-function at the maximum point is nonpositive. The Hopf lemma then forces the P-function to be constant.
When $K\not=0$, however, \cref{eq. wang argument} no longer forces $S_K$ to vanish and therefore does not provide the sign information needed to complete the argument.
\end{remark}
 
\section{Proof of \cref{thm. sharp Sobolev-trace inequality} for $\alpha<1$}\label{App results}

For $\lambda,\alpha\in\mathbb{R}$, define a family of functionals over $H^1(\mathbb{B}^2)$:
    \begin{equation}\label{eq. def of functional F}
        F_{\lambda,\alpha}(u)
        :=\frac{1}{4\pi}
        \int_{\mathbb{B}^2}|\nabla u|^2
        +\frac{\lambda}{2\pi}
        \int_{\partial \mathbb{B}^2}u
        -\frac{\alpha}{2}
        \log\left(
        \frac{1}{\pi}\int_{\mathbb{B}^2}e^{2u}
        \right)
        -(\lambda-\alpha)
        \log\left(
        \frac{1}{2\pi}
        \int_{\partial\mathbb{B}^2}e^{u}
        \right).
    \end{equation}
    
    First, we calculate the value of $F_{\lambda,\alpha}$ at some radial functions.
    \begin{lemma}\label{lem. F of wq}
        For $q>-1$, define
        \begin{equation}\label{eq. def of wq}
            w_q(x):=-\log
        \left(\frac{1+q|x|^2}{1+q}\right).
        \end{equation}
        Then there holds
        \[
         F_{\lambda,\alpha}(w_q)
        =\left(1-\frac{\alpha}{2}\right)
        \log(1+q)
        -\frac{q}{1+q}.
        \]
    \end{lemma}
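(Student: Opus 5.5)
The identity follows by direct computation. The key simplification is that $w_q$ vanishes on $\partial\mathbb{B}^2$, so both boundary terms in \cref{eq. def of functional F} drop out. Only the Dirichlet energy and the interior exponential integral then need to be computed, and both reduce to elementary one-variable integrals after passing to polar coordinates and substituting $t=|x|^2$.

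\textbf{Boundary terms.} For $|x|=1$ we have $w_q(x)=-\log\frac{1+q}{1+q}=0$. Hence
\[
\int_{\partial\mathbb{B}^2}w_q=0
\qquad\text{and}\qquad
\log\Big(\frac{1}{2\pi}\int_{\partial\mathbb{B}^2}e^{w_q}\Big)=\log 1=0 .
\]
So the terms carrying $\lambda$ and $\lambda-\alpha$ vanish, and $F_{\lambda,\alpha}(w_q)$ is in fact independent of $\lambda$. Note that $q>-1$ guarantees $1+q|x|^2>0$ on $\overline{\mathbb{B}^2}$, so $w_q$ is smooth and lies in $H^1(\mathbb{B}^2)$.

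\textbf{Dirichlet energy.} We have
\[
\nabla w_q=-\frac{2qx}{1+q|x|^2},
\qquad
|\nabla w_q|^2=\frac{4q^2|x|^2}{(1+q|x|^2)^2}.
\]
In polar coordinates with $t=r^2$, so that $r^3\,dr=\tfrac12 t\,dt$, this gives
\[
\frac{1}{4\pi}\int_{\mathbb{B}^2}|\nabla w_q|^2=q^2\int_0^1\frac{t}{(1+qt)^2}\,dt .
\]
Writing $\frac{t}{(1+qt)^2}=\frac1q\big(\frac{1}{1+qt}-\frac{1}{(1+qt)^2}\big)$ yields
\[
q^2\int_0^1\frac{t}{(1+qt)^2}\,dt=\log(1+q)-\frac{q}{1+q}.
\]
For $q=0$ both sides are $0$, consistent with the limit.

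\textbf{Interior exponential term.} Since $e^{2w_q}=\frac{(1+q)^2}{(1+q|x|^2)^2}$, the same substitution gives
\[
\int_{\mathbb{B}^2}e^{2w_q}=\pi(1+q)^2\int_0^1\frac{dt}{(1+qt)^2}=\pi(1+q)^2\cdot\frac{1}{1+q}=\pi(1+q).
\]
Therefore $-\frac{\alpha}{2}\log\big(\frac1\pi\int_{\mathbb{B}^2}e^{2w_q}\big)=-\frac{\alpha}{2}\log(1+q)$.

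\textbf{Conclusion.} Summing the three pieces gives
\[
F_{\lambda,\alpha}(w_q)=\log(1+q)-\frac{q}{1+q}-\frac{\alpha}{2}\log(1+q)=\Big(1-\frac{\alpha}{2}\Big)\log(1+q)-\frac{q}{1+q},
\]
as claimed. There is no real obstacle here; the only point needing care is the antiderivative of $t/(1+qt)^2$, including the degenerate case $q=0$, which is checked by continuity.
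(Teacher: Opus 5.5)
Your proof is correct and follows the paper's argument: the boundary terms drop out because $w_q\equiv 0$ on $\partial\mathbb{B}^2$, and the two interior quantities $\frac{1}{\pi}\int_{\mathbb{B}^2}e^{2w_q}=1+q$ and $\frac{1}{4\pi}\int_{\mathbb{B}^2}|\nabla w_q|^2=\log(1+q)-\frac{q}{1+q}$ are computed exactly as in the paper, via polar coordinates, the substitution $t=r^2$, and the same partial-fraction splitting. Your separate check of the degenerate case $q=0$ is a small extra that the paper leaves implicit.
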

    \begin{proof}
        A straightforward calculation yields
        \begin{equation}\label{eq. area of wq}
            \frac{1}{\pi}\int_{\mathbb{B}^2}e^{2w_q}
        =(1+q)^2\int_0^1\frac{2r\dd r}{(1+qr^2)^2}
        =(1+q)^2\int_0^1\frac{\dd t}{(1+qt)^2}
        =1+q,
        \end{equation}
        \begin{align}\label{eq. area of wq grad}
            \frac{1}{4\pi}
        \int_{\mathbb{B}^2}|\nabla w_{q}|^2
        =2q^2\int_0^1\frac{r^3\dd r}{(1+qr^2)^2}
        &=q\int_0^1
        \left(\frac{1}{1+qt}-\frac{1}{(1+qt)^2}\right)\dd t\\[3mm]
        &
        =\log(1+q)-\frac{q}{1+q}\notag.
        \end{align}
        Notice that $w_q=0$ on $\partial\mathbb{B}^2$, the proof finishes by inserting \cref{eq. area of wq} and \cref{eq. area of wq grad} into \cref{eq. def of functional F}.
    \end{proof}
\vspace{1em}

    By minimizing $F_{\lambda,\alpha}(w_q)$ among all $q>-1$, it is straightforward to see that
    \[
    F_{\lambda,\alpha}(w_q)
    \geq F_{\lambda,\alpha}(w_{\frac{\alpha}{2-\alpha}})
    =-\frac{\alpha}{2}
        -\left(1-\frac{\alpha}{2}\right)
        \log\left(1-\frac{\alpha}{2}\right),
    \]
    where 
    \[
    w_{\frac{\alpha}{2-\alpha}}(x)
    =-\log 
    \left(\frac{2-\alpha+\alpha|x|^2}{2}\right).
    \]
    In the following, we shall show that these are exactly the extremal functions of $F_{\lambda,\alpha}$.
\vspace{1em}

    To provide a transparent derivation of the family of functional inequalities, we deliberately avoid invoking the sharp inequalities such as \cref{eq. Lebedev-Milin ineq.,eq. sharp Moser Trudinger Onofri on disk}. This choice clarifies how our argument might be extended to general compact surfaces with boundary. The only standard analytic tool that we quote without proof is the classical Moser--Trudinger inequality; see \cite{Mos71}.
    
    \begin{theorem}[Moser-Trudinger inequality]\label{thm. Moser-Trudinger ineq}
\ 
    \begin{enumerate}
        \item Let $\mathbb{S}^2$ be the unit round sphere. There exists a universal constant $C_1$ such that, for any $u\in H^1(\mathbb{S}^2)$ with $\int_{\mathbb{S}^2}u=0$, there holds
        \[
        \int_{\mathbb{S}^2}\exp\left(
        \frac{4\pi u^2}{\int_{\mathbb{S}^2}
        |\nabla_{g_{\mathbb{S}^2}} u|_{g_{\mathbb{S}^2}}^2\dd \sigma_{g_{\mathbb{S}^2}}}
        \right)\dd \sigma_{g_{\mathbb{S}^2}}\leq C_1.
        \]
        \item Let $\Omega\subset \mathbb{R}^2$ be a smooth bounded domain. Then there exists a universal constant $C_2$ such that, for any $u\in H_0^1(\Omega)$, there holds
        \begin{equation}
        \label{eq. Moser-Trudinger on Omega}
            \int_{\Omega}\exp\left(
        \frac{4\pi u^2}{\int_{\Omega}|\nabla u|^2}
        \right)\leq C_2.
        \end{equation}
    \end{enumerate}
\end{theorem}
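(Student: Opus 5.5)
The statement above is quoted from Moser \cite{Mos71}. If I were to prove it rather than cite it, I would follow Moser's symmetrization argument. It reduces both parts to a single one-dimensional lemma:
\[
\text{if } \phi\in C^1[0,\infty),\ \phi(0)=0,\ \int_0^\infty \phi'(t)^2\dd t\le 1, \text{ then } \int_0^\infty e^{\phi(t)^2-t}\dd t\le C_0
\]
for a universal constant $C_0$. I would prove this lemma first, and I expect it to be the main obstacle. By Cauchy--Schwarz, $\phi(t)^2\le t\int_0^t\phi'^2$, so the exponent $\phi^2-t$ is always $\le 0$. The difficulty is that it can be close to $0$ on a long interval. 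Following Moser, I would fix $T$ with $\phi(T)^2-T$ maximal near the critical level. I would then split the integral at the first time $t_0$ where $\phi(t)^2\ge t-2$. On $[0,t_0]$ the integrand is bounded by $e^{-2}\cdot$const via the crude bound above. On $[t_0,\infty)$ I would use $\phi(t)^2\le(\phi(t_0)+\sqrt{(t-t_0)\delta})^2$, with $\delta=1-\int_0^{t_0}\phi'^2$, together with $\phi(t_0)^2\le t_0(1-\delta)$. Expanding and using $2ab\le \delta^{-1}\dots$-type bookkeeping should give $\phi^2-t\le c-(t-t_0)\cdot\eta$ with $\eta>0$ independent of $\phi$, which makes the tail integrable. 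Getting a uniform $\eta$ here is the delicate point.

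For part (2), I would use Schwarz symmetrization. By P\'olya--Szeg\H{o}, replacing $|u|$ by its symmetric decreasing rearrangement $u^*$ on the disk $\Omega^*$ with $|\Omega^*|=|\Omega|$ does not increase $\int|\nabla u|^2$. It also preserves $\int_\Omega e^{4\pi u^2/D}$, where $D=\int|\nabla u|^2$. So it suffices to treat radial decreasing $u$ on a disk of radius $R$, normalized by scaling to $R=1$ (the inequality is scale invariant in dimension two). Next I would substitute $|x|^2=e^{-t}$ and set $\phi(t)=\sqrt{4\pi/D}\,u(x)$. A direct computation gives $\int|\nabla u|^2=4\pi\int_0^\infty(\partial_t u)^2\dd t$ and $\int_{\mathbb{B}^2}e^{4\pi u^2/D}=\pi\int_0^\infty e^{\phi^2-t}\dd t$, with $\phi(0)=0$ and $\int\phi'^2=1$. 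The lemma then gives (2) with $C_2=\pi C_0|\Omega|$.

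For part (1), I would let $m$ be a median of $u$ and write $u-m=u_+-u_-$, where each of $u_\pm$ is supported in a set of area $\le 2\pi$. Spherical symmetrization into geodesic caps centred at a pole uses the L\'evy--Gromov isoperimetric inequality on $\mathbb{S}^2$, which is at least as good as the Euclidean one for caps of area $\le 2\pi$. It reduces each $u_\pm$ to a function of the polar angle vanishing on a cap of area $\le 2\pi$. In the coordinate $t=-\log(\text{normalized cap area})$, the same lemma applies, with the constant $4\pi$ preserved. Summing the two halves gives
\[
\int e^{4\pi(u-m)^2/D}\le C,
\]
which uses $\int|\nabla u_+|^2+\int|\nabla u_-|^2=D$.

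The remaining step, and a second genuine difficulty, is to pass from the median to the mean $\bar u=0$ without losing the constant $4\pi$. Poincar\'e only gives $|m|\le C\sqrt D$, and then the naive bound $u^2\le(1+\delta)(u-m)^2+C_\delta m^2$ spoils the sharp exponent. Instead I would write $u^2=(u-m)^2+2m(u-m)+m^2$. I would exploit that $u$ has mean zero: the region where $u-m$ is large is where $u$ itself is large, so $m$ and $u-m$ there have opposite signs. Alternatively, I would run the one-dimensional lemma in the slightly stronger form that allows an affine perturbation of $\phi$, as in Moser's original treatment. This is the step I would check most carefully.
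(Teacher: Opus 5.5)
Your plan follows Moser's classical symmetrization route. The paper does not reproduce that argument; it simply quotes \cite{Mos71}. As written, however, the step you flag yourself, passing from the median $m$ to the mean on $\mathbb{S}^2$, is a genuine gap, and the heuristic you give for it is false. If $m>0$, then on $\{u>m\}$ the cross term $2m(u-m)$ is \emph{positive}, and $u-m$ may be unbounded there. A concentrating positive spike carries almost no $L^1$ mass, so the condition $\int u=0$ does not see it.

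Nor can the cross term be controlled using only the summed bound $\int e^{4\pi(u-m)^2/D}\le C$ together with $|m|\le C\sqrt D$. The cross term is of size $c|u-m|/\sqrt D$. For Moser-type profiles, e.g.\ $\phi_k(t)=\min\{t,k\}/\sqrt k$ in your one-dimensional model, $\int_0^\infty e^{\phi_k^2-t}\dd t$ stays bounded while $\int_0^\infty e^{\phi_k^2+c\phi_k-t}\dd t\ge e^{c\sqrt k}$. For the same reason, an affine perturbation of the one-dimensional lemma is false unless the size of the perturbation is tied to an energy deficit.

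The missing idea is to keep the two halves separate. After replacing $u$ by $-u$, assume $m\ge 0$. Put $w_\pm=(u-m)_\pm$ and $D_\pm=\int|\nabla w_\pm|^2$, so $D_++D_-=D$. Since $w_-$ vanishes on a set of area at least $2\pi$, Poincar\'e gives $\int w_-\le C\sqrt{D_-}$. Then $\int u=0$ gives $4\pi m=\int w_--\int w_+\le C\sqrt{D_-}$. On $\{u\le m\}$ one has $u^2\le m^2+w_-^2$ and $4\pi m^2/D\le C$, so the half-support bound for $w_-$ applies. On $\{u>m\}$, with $\theta=D_-/D$ and $s=w_+/\sqrt{D_+}$,
\[
\frac{4\pi u^2}{D}=4\pi(1-\theta)s^2+\frac{8\pi m\sqrt{D_+}}{D}\,s+\frac{4\pi m^2}{D}\le 4\pi s^2-4\pi\theta s^2+C\sqrt{\theta}\,s+C\le 4\pi s^2+C'.
\]
So the energy deficit of the unfavourable half absorbs exactly the cross term, and the half-support bound for $w_+$ concludes.

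Two further steps need repair. First, the spherical isoperimetric inequality $L^2\ge A(4\pi-A)$ is \emph{weaker} than the Euclidean one $L^2\ge 4\pi A$, not at least as good. In your coordinate $A=A_0e^{-t}$ with $A_0\le 2\pi$, the energy of a cap-symmetric $f$ is $\int_0^\infty(4\pi-A_0e^{-t})f_t^2\dd t$. So you only get $\int_0^\infty\phi'^2\bigl(1-\tfrac{A_0}{4\pi}e^{-t}\bigr)\dd t\le 1$, and the constant $4\pi$ is not literally preserved. This is repaired by the change of variables $\tau=\int_0^t\bigl(1-\tfrac{A_0}{4\pi}e^{-s}\bigr)^{-1}\dd s\in[t,t+\log 2]$. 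It gives $\int(\partial_\tau\phi)^2\dd\tau\le 1$ at the cost of a factor $2$.

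Second, your sketch of the one-dimensional lemma does not close. Since $\phi(0)=0$, the first time with $\phi^2\ge t-2$ is $t_0=0$. A pointwise bound $e^{-2}$ on $[0,t_0]$ gives no uniform bound on the integral. The slope $1-\delta$ also degenerates when $\int_{t_0}^\infty\phi'^2$ is close to $1$. The standard fix works at every level $\lambda\ge 1$:
\begin{itemize}
\item Let $E_\lambda=\{t:\phi(t)^2\ge t-\lambda\}$, let $T$ be its first point in $[2\lambda,\infty)$, and set $\delta=\int_T^\infty\phi'^2$.
\item Then $T\delta\le\lambda$ forces $\delta\le\frac12$ and $|\phi(T)|\sqrt\delta\le\sqrt\lambda$.
\item Hence $\phi(t)^2-t\le 2\sqrt{\lambda(t-T)}-\frac12(t-T)$ for $t>T$, so $|E_\lambda|\le C\max\{1,\lambda\}$.
\item Finally $\int_0^\infty e^{\phi^2-t}\dd t=\int_0^\infty e^{-\lambda}|E_\lambda|\dd\lambda$ is bounded.
\end{itemize}
Part (2), by Schwarz symmetrization, is fine as you describe it.
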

As a consequence, we readily obtain the following inequalities, which will be used repeatedly in the sequel. Although the arguments are standard, we include the details for the reader's convenience. We emphasize that the constants below are not optimal; nevertheless, these estimates suffice to establish the sharp inequalities later.
\begin{lemma}
    Let $\mathbb{B}^2$ be the unit disk in $\mathbb{R}^2$. Then there holds 
    \begin{align}
        \log\left(
        \frac{1}{\pi}\int_{\mathbb{B}^2}e^{2u}
        \right)
        \leq& \frac{1}{2\pi}\int_{\mathbb{B}^2}
        |\nabla u|^2
        +\frac{1}{\pi}
        \int_{\partial\mathbb{B}^2} u
        +\log\frac{eC_1}{8\pi},\quad 
        \forall u\in H^1(\mathbb{B}^2).
        \label{eq. weak Moser on ball}
        \\[3mm]
        \log\left(
        \frac{1}{\pi}\int_{\mathbb{B}^2}e^{2u}
        \right)
        \leq& \frac{1}{4\pi}\int_{\mathbb{B}^2}
        |\nabla u|^2+\log\frac{C_2}{\pi},\quad,\forall u\in H_0^1(\mathbb{B}^2).
        \label{eq. weak Moser for Dirichlet boundary value}
        \\[3mm]
        \log\left(
        \frac{1}{2\pi}
        \int_{\partial \mathbb{B}^2}e^{u}
        \right)
        \leq& \frac{1+\epsilon}{4\pi}\int_{\mathbb{B}^2}
        |\nabla u|^2
        +\frac{1}{2\pi}
        \int_{\partial\mathbb{B}^2} u
        +C_\epsilon,\quad \forall u\in H^1(\mathbb{B}^2),\qquad \forall \epsilon>0.\label{eq. weak LMO inequality}
    \end{align}
\end{lemma}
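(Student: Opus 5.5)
The plan is to derive all three inequalities from \cref{thm. Moser-Trudinger ineq} via the elementary bound $2|u|\le \frac{4\pi u^2}{A}+\frac{A}{4\pi}$, valid for any $A>0$, together with suitable extensions or reductions.

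\textbf{Dirichlet case \cref{eq. weak Moser for Dirichlet boundary value}.} This is immediate. For $u\in H_0^1(\mathbb{B}^2)$, set $A=\int_{\mathbb{B}^2}|\nabla u|^2$. Then $e^{2u}\le \exp\bigl(\frac{4\pi u^2}{A}\bigr)e^{A/(4\pi)}$, and \cref{eq. Moser-Trudinger on Omega} gives $\int_{\mathbb{B}^2} e^{2u}\le C_2e^{A/(4\pi)}$. Dividing by $\pi$ and taking logarithms yields \cref{eq. weak Moser for Dirichlet boundary value}.

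\textbf{Interior estimate \cref{eq. weak Moser on ball}.} I reflect $u$ to the sphere. Let $\pi_N:\mathbb{S}^2_+\to\mathbb{B}^2$ be stereographic projection from the south pole, which is conformal. Take $U$ on $\mathbb{S}^2$ to be $u\circ\pi_N$ on the upper hemisphere and its even reflection across the equator on the lower one. By conformal invariance of the Dirichlet integral, $\int_{\mathbb{S}^2}|\nabla U|^2=2\int_{\mathbb{B}^2}|\nabla u|^2$. The area element satisfies $d\sigma=\frac{4}{(1+|x|^2)^2}dx\ge dx$ on $\mathbb{B}^2$, so $\int_{\mathbb{B}^2}e^{2u}\le\int_{\mathbb{S}^2_+}e^{2U}\le \int_{\mathbb{S}^2}e^{2U}$.

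Write $\bar U$ for the mean of $U$. Applying the sphere Moser--Trudinger inequality to $U-\bar U$ with $A=\int_{\mathbb{S}^2}|\nabla U|^2$ gives
\[
\int_{\mathbb{S}^2}e^{2U}\le C_1e^{2\bar U+A/(4\pi)} .
\]
Two points then need checking.
\begin{enumerate}
\item Since $A/(4\pi)=\frac{1}{2\pi}\int_{\mathbb{B}^2}|\nabla u|^2$, the gradient coefficient already matches.
\item The mean $\bar U=\frac{1}{2\pi}\int_{\mathbb{S}^2_+}U$ must be compared with the boundary average $\frac{1}{2\pi}\int_{\partial\mathbb{B}^2}u$. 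I will use a Poincaré/trace-type bound $\bar U\le \frac{1}{2\pi}\int_{\partial \mathbb{B}^2} u+\delta\int|\nabla u|^2+C$.
\end{enumerate}

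The second point is the main obstacle: the stated inequality has coefficient exactly $\frac{1}{2\pi}$ on the gradient with no slack, yet the comparison term costs some gradient. I would first try to absorb it exactly. Use a harmonic decomposition $u=h+u_0$, with $h$ harmonic with the same trace and $u_0\in H_0^1$. The mean-value property gives $h(0)=\frac{1}{2\pi}\int_{\partial\mathbb{B}^2}u$, and for harmonic $h$ the weighted average over the hemisphere is controlled by Jensen. Alternatively, I could apply the sphere inequality to $U-c$ with $c$ the boundary average: by Cauchy--Schwarz, $|\bar U-c|^2\lesssim\int|\nabla U|^2$, and this can be absorbed into the Moser exponent using the form $\exp(4\pi u^2/A)$ with a slightly adjusted constant. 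Since the constants are declared non-optimal (the $\log\frac{eC_1}{8\pi}$ signals such a step), I would track the factor $e$ arising from an inequality like $2ab\le a^2+b^2$, and accept that some constant will appear.

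\textbf{Boundary estimate \cref{eq. weak LMO inequality}.} Here I apply a one-dimensional Moser--Trudinger (trace) argument. Decompose $u=h+u_0$ as above, with $h$ the harmonic extension of $f=u|_{\partial\mathbb{B}^2}$. Then $\int_{\mathbb{B}^2}|\nabla h|^2\le\int_{\mathbb{B}^2}|\nabla u|^2$ and $\int_{\mathbb{B}^2}|\nabla h|^2=\pi\sum|n|\,|\hat f_n|^2$, which is the $H^{1/2}$ seminorm. The circle Moser--Trudinger inequality in $H^{1/2}$ gives $\log\frac{1}{2\pi}\int e^{f-\bar f}\le \frac{1}{4\pi}\int|\nabla h|^2+C$, and the $\epsilon$-loss permits a crude version. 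If I wish to use only \cref{thm. Moser-Trudinger ineq}, I would instead apply the argument to $e^{u}$ through the divergence identity $\int_{\partial\mathbb{B}^2}e^{u}=\int_{\mathbb{B}^2}\operatorname{div}(xe^{u})=\int_{\mathbb{B}^2}(2+x\cdot\nabla u)e^u$. Then Cauchy--Schwarz bounds this by $(\int e^{2u})^{1/2}(\int(2+|\nabla u|)^2)^{1/2}$, and \cref{eq. weak Moser on ball} applied to $(1+\epsilon)$-dilated quantities with Young's inequality should produce the $\frac{1+\epsilon}{4\pi}$ coefficient and the constant $C_\epsilon$.
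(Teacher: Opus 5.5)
Your Dirichlet estimate is the paper's argument, and so is your divergence-identity proof of \eqref{eq. weak LMO inequality}. That step has enough $\epsilon$-slack to tolerate a lossy interior estimate. Your $H^{1/2}$ alternative would also work, but it imports essentially Lebedev--Milin itself.

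The gap is in \eqref{eq. weak Moser on ball}, whose Dirichlet coefficient is exactly $\frac{1}{2\pi}$. Because you reflect $u$ itself, the sphere mean is as follows, with $\phi(x)=\log\frac{2}{1+|x|^2}$ (so $-\Delta\phi=e^{2\phi}$, and $\phi=0$, $\partial_\nu\phi=-1$ on $\partial\mathbb{B}^2$):
\[
2\bar U=\frac{1}{\pi}\int_{\mathbb{B}^2}u\,e^{2\phi}
=\frac{1}{\pi}\int_{\partial\mathbb{B}^2}u+\frac{1}{\pi}\int_{\mathbb{B}^2}\langle\nabla u,\nabla\phi\rangle .
\]
The cross term is linear in $\nabla u$ and unbounded above (take $u=t\phi$, $t\to+\infty$). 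It can only be absorbed as $\delta\int_{\mathbb{B}^2}|\nabla u|^2+C_\delta$. So your chain proves the inequality with coefficient $\frac{1}{2\pi}+\delta$, not the stated one.

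Neither proposed repair removes this.
\begin{itemize}
\item In $u=h+u_0$, the mean-value property does give $\frac{1}{2\pi}\int_{\mathbb{B}^2}h\,e^{2\phi}=h(0)$ exactly. But the $H_0^1$ part contributes $\frac{1}{\pi}\int_{\mathbb{B}^2}\langle\nabla u_0,\nabla\phi\rangle$, which is the entire unbounded cross term. Jensen's inequality says nothing about a linear average.
\item Shifting by the boundary average and absorbing $|\bar U-c|\lesssim A^{1/2}$ into $\exp\bigl(4\pi(U-c)^2/A\bigr)$ forces the exponent down to $4\pi/(1+\eta)$. The loss then lands in the coefficient, not in the constant.
\end{itemize}
A $\delta$-lossy version would in fact suffice for every later use of \eqref{eq. weak Moser on ball} in the paper, but it is not the lemma as stated.

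The missing idea is to subtract the conformal factor before reflecting: reflect $v:=u-\phi$ instead of $u$. Then $e^{2v}\,\dd\sigma=e^{2u}\,\dd x$ exactly, with no need for $\dd\sigma\geq \dd x$. The sphere inequality gives
\[
\log\Bigl(\frac{1}{2\pi}\int_{\mathbb{B}^2}e^{2u}\Bigr)\leq\frac{1}{2\pi}\int_{\mathbb{B}^2}|\nabla v|^2+\frac{1}{\pi}\int_{\mathbb{B}^2}v\,e^{2\phi}+\log\frac{C_1}{4\pi}.
\]
The same integration by parts now produces $\frac{1}{\pi}\int_{\mathbb{B}^2}\langle\nabla v,\nabla\phi\rangle$, which completes the square:
\[
\frac{1}{2\pi}\int_{\mathbb{B}^2}\bigl(|\nabla v|^2+2\langle\nabla v,\nabla\phi\rangle\bigr)
=\frac{1}{2\pi}\int_{\mathbb{B}^2}|\nabla u|^2-\frac{1}{2\pi}\int_{\mathbb{B}^2}|\nabla\phi|^2,
\qquad
\frac{1}{2\pi}\int_{\mathbb{B}^2}|\nabla\phi|^2=2\log 2-1 .
\]
Since $v=u$ on $\partial\mathbb{B}^2$, this yields \eqref{eq. weak Moser on ball} with exactly the constant $\log 2-(2\log 2-1)+\log\frac{C_1}{4\pi}=\log\frac{eC_1}{8\pi}$. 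The factor $e$ comes from $\int_{\mathbb{B}^2}|\nabla\phi|^2$, not from a Young-type step as you guessed.
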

\begin{proof}
    We claim that
    \begin{equation}\label{eq. weak Moser on sphere}
        \log\left( \frac{1}{4\pi}\int_{\mathbb{S}^2}e^{2u}
        \dd \sigma_{g_{\mathbb{S}^2}}\right)
        \leq \frac{1}{4\pi}
        \int_{\mathbb{S}^2}
        |\nabla_{g_{\mathbb{S}^2}} u|_{g_{\mathbb{S}^2}}^2
        \dd \sigma_{g_{\mathbb{S}^2}}
        +\frac{1}{2\pi}\int_{\mathbb{S}^2}u
        \dd \sigma_{g_{\mathbb{S}^2}}
        +\log\frac{C_1}{4\pi},
        \quad \forall u\in H^1(\mathbb{S}^2).
    \end{equation}
    Indeed, fix $u\in H^1(\mathbb{S}^2)$ with $\int_{\mathbb{S}^2} u
    \dd \sigma_{g_{\mathbb{S}^2}}=0$ and $\int_{\mathbb{S}^2} 
    |\nabla_{g_{\mathbb{S}^2}} u|_{g_{\mathbb{S}^2}}^2
    \dd \sigma_{g_{\mathbb{S}^2}}>0$. By Young's inequality, we have
    \begin{align*}
        2u
        \leq \frac{4\pi u^2}{\int_{\mathbb{S}^2}
        |\nabla_{g_{\mathbb{S}^2}} u|_{g_{\mathbb{S}^2}}^2
        \dd \sigma_{g_{\mathbb{S}^2}}}
        +\frac{\int_{\mathbb{S}^2} 
        |\nabla_{g_{\mathbb{S}^2}} u|_{g_{\mathbb{S}^2}}^2
        \dd \sigma_{g_{\mathbb{S}^2}}}{4\pi}.
    \end{align*}
    Therefore, \cref{thm. Moser-Trudinger ineq} implies
    \begin{align*}
        \int_{\mathbb{S}^2} e^{2u}
        \dd \sigma_{g_{\mathbb{S}^2}}
        & \leq
        \exp\left(\frac{\int_{\mathbb{S}^2} 
        |\nabla_{g_{\mathbb{S}^2}} u|_{g_{\mathbb{S}^2}}^2
        \dd \sigma_{g_{\mathbb{S}^2}}}{4\pi}
        \right)
        \int_{\mathbb{S}^2}\exp\left(
        \frac{4\pi u^2}{\int_{\mathbb{S}^2}
        |\nabla_{g_{\mathbb{S}^2}} u|_{g_{\mathbb{S}^2}}^2\dd \sigma_{g_{\mathbb{S}^2}}}
        \right)\dd \sigma_{g_{\mathbb{S}^2}}\\[3mm]
        & \leq C_1
        \exp\left(\frac{\int_{\mathbb{S}^2} 
        |\nabla_{g_{\mathbb{S}^2}} u|_{g_{\mathbb{S}^2}}^2
        \dd \sigma_{g_{\mathbb{S}^2}}}{4\pi}
        \right)
    \end{align*}
    Taking logarithms gives
   \[
   \log\left( \frac{1}{4\pi}\int_{\mathbb{S}^2}e^{2u}
        \dd \sigma_{g_{\mathbb{S}^2}}\right)
        \leq \frac{1}{4\pi}
        \int_{\mathbb{S}^2}
        |\nabla_{g_{\mathbb{S}^2}} u|_{g_{\mathbb{S}^2}}^2
        \dd \sigma_{g_{\mathbb{S}^2}}
        +\log\frac{C_1}{4\pi}.
   \]
   Replacing $u$ by $u-\frac{1}{4\pi}\int_{\mathbb{S}^2} u
    \dd \sigma_{g_{\mathbb{S}^2}}$ proves our claim.
    
    Let $\phi(x):=\log\frac{2}{1+|x|^2}$.  It follows that $(\mathbb{S}^2_+,g_{\mathbb{S}^2_+})=(\mathbb{B}^2,e^{2\phi}g_0)$ and the upper hemisphere could be regarded as a conformal manifold over $\mathbb{B}^2$. Consider
    \[
    v:=u-\phi.
    \]
    Extending $v$ evenly over $\mathbb{S}^2$ gives a function $V\in H^1(\mathbb{S}^2)$. It follows from the conformal invariance that
    \begin{align*}
        &\int_{\mathbb{S}^2}e^{2V}
        \dd \sigma_{g_{\mathbb{S}^2}}
        =2\int_{\mathbb{S}^2}e^{2v}
        \dd \sigma_{g_{\mathbb{S}^2}}
        =2\int_{\mathbb{B}^2}e^{2u}
        ,\\[2mm]
        &\int_{\mathbb{S}^2}
        |\nabla_{g_{\mathbb{S}^2}} V|_{g_{\mathbb{S}^2}}^2
        \dd \sigma_{g_{\mathbb{S}^2}}
        =2\int_{\mathbb{S}^2_+}
        |\nabla_{g_{\mathbb{S}^2}} v|_{g_{\mathbb{S}^2}}^2
        \dd \sigma_{g_{\mathbb{S}^2}}
        =2\int_{\mathbb{B}^2}|\nabla v|^2
        ,\\[2mm]
        &\int_{\mathbb{S}^2} V
        \dd \sigma_{g_{\mathbb{S}^2}}
        =2\int_{\mathbb{S}_+^2}v
        \dd \sigma_{g_{\mathbb{S}^2}}
        =2\int_{\mathbb{B}^2}ve^{2\phi}.
    \end{align*}
    Combining these with \cref{eq. weak Moser on sphere}, we get
    \[
    \log\left( \frac{1}{2\pi}\int_{\mathbb{B}^2}e^{2u}
        \right)
        \leq \frac{1}{2\pi}
        \int_{\mathbb{B}^2}
        |\nabla v|^2
        +\frac{1}{\pi}\int_{\mathbb{B}^2}
        ve^{2\phi}
        +\log\frac{C_1}{4\pi}.
    \]
    Noting that $-\Delta\phi=e^{2\phi}$ in $\mathbb{B}^2$ and $\phi=0,\ \frac{\partial\phi}{\partial\nu}=-1$ on $\partial\mathbb{B}^2$, the right hand side of this inequality could be written as
    \begin{align*}
        \frac{1}{2\pi}
        \int_{\mathbb{B}^2}
        |\nabla v|^2
        +\frac{1}{\pi}\int_{\mathbb{B}^2}
        ve^{2\phi}
        &= \frac{1}{2\pi}\int_{\mathbb{B}^2}
        |\nabla v|^2-2v\Delta\phi\\[2mm]
        &= \frac{1}{2\pi}\int_{\mathbb{B}^2}
        |\nabla v|^2+2\langle\nabla v,\nabla\phi\rangle
        +\frac{1}{\pi}
        \int_{\partial\mathbb{B}^2}v\\[2mm]
        &= \frac{1}{2\pi}\int_{\mathbb{B}^2}
        |\nabla u|^2-|\nabla\phi|^2
        +\frac{1}{\pi}
        \int_{\partial\mathbb{B}^2}u\\[2mm]
        &= \frac{1}{2\pi}\int_{\mathbb{B}^2}
        |\nabla u|^2
        +\frac{1}{\pi}
        \int_{\partial\mathbb{B}^2}u
        -2\log 2+1.
    \end{align*}
    This proves \cref{eq. weak Moser on ball}.

    Using the second assertion in \cref{thm. Moser-Trudinger ineq}, the same proof as that of \cref{eq. weak Moser on sphere}  yields \cref{eq. weak Moser for Dirichlet boundary value}.

    By divergence theorem, for a smooth function $u$,
    \begin{align*}
        \int_{\partial\mathbb{B}^2}e^u
    =\int_{\mathbb{B}^2}\operatorname{div}(e^ux)
    =&2\int_{\mathbb{B}^2}e^u
    +\int_{\mathbb{B}^2}e^u\langle x,\nabla u\rangle\notag\\
    \leq& 2\sqrt{\pi}
    \left(
    \int_{\mathbb{B}^2} e^{2u}
    \right)^{\frac{1}{2}}
    +\left(
    \int_{\mathbb{B}^2} e^{2u}
    \right)^{\frac{1}{2}}
    \left(
    \int_{\mathbb{B}^2} |\nabla u|^2
    \right)^{\frac{1}{2}}.
    \end{align*}
    Equivalently,
    \begin{equation}\label{eq. isoperimetric upper bound}
        \left(
        \frac{1}{2\pi}
        \int_{\partial \mathbb{B}^2}e^{u}
        \right)^2
        \leq \left(
        \frac{1}{\pi}\int_{\mathbb{B}^2}e^{2u}
        \right)
        \left(1+
        \left(\frac{1}{4\pi}
        \int_{\mathbb{B}^2}|\nabla u|^2\right)^{\frac{1}{2}}\right)^2.
    \end{equation}
    It follows from \cref{eq. weak Moser on ball} and \cref{eq. isoperimetric upper bound} that
    \begin{align*}
        2\log\left(
        \frac{1}{2\pi}
        \int_{\partial \mathbb{B}^2}e^{u}
        \right)
        & \leq 
        \frac{1}{2\pi}\int_{\mathbb{B}^2}
        |\nabla u|^2
        +\frac{1}{\pi}
        \int_{\partial\mathbb{B}^2} u
        +\log\frac{eC_1}{8\pi}
        +2\log
        \left(1+
        \left(\frac{1}{4\pi}
        \int_{\mathbb{B}^2}|\nabla u|^2\right)^{\frac{1}{2}}\right)\\[2mm]
        & \leq 
        \frac{1+\epsilon}{2\pi}\int_{\mathbb{B}^2}
        |\nabla u|^2
        +\frac{1}{\pi}
        \int_{\partial\mathbb{B}^2} u
        +C_\epsilon.
    \end{align*}
    where in the last line we used, for any $\epsilon>0$, there exists a constant $C_\epsilon>0$, such that
    \[
    \log(1+t^{\frac{1}{2}})
    \leq \epsilon t+C_\epsilon,\quad \forall t\geq 0.
    \]
    This proves \cref{eq. weak LMO inequality}.
\end{proof}

    Next, we prove a compactness lemma, which plays a fundamental role in later minimizing arguments.
    \begin{lemma}\label{lem. compactness in H1}
        Assume $\{u_j\}$ is a bounded sequence in $H^1(\mathbb{B}^2)$. Then, up to a subsequence, there exists $u_\infty\in H^1(\mathbb{B}^2)$, such that
        \[
        u_j\rightharpoonup u_\infty \quad \text{in } H^1(\mathbb{B}^2).
        \]
        Moreover, for any $q\in\mathbb{R}$, there holds
        \begin{align*}
            &e^{qu_j}\to e^{qu_\infty}
            \quad \text{in }L^1(\mathbb{B}^2),\\
            &e^{qu_j}\to e^{qu_\infty}
            \quad 
            \text{in }L^1(\partial\mathbb{B}^2).
        \end{align*}
    \end{lemma}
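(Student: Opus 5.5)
The plan is to combine weak compactness in $H^1$, Rellich/trace compactness, and uniform exponential integrability from the Moser--Trudinger-type inequalities \cref{eq. weak Moser on ball} and \cref{eq. weak LMO inequality}.

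First, since $H^1(\mathbb{B}^2)$ is a Hilbert space, a bounded sequence has a subsequence with $u_j\rightharpoonup u_\infty$ weakly in $H^1(\mathbb{B}^2)$. Rellich--Kondrachov then gives $u_j\to u_\infty$ strongly in $L^2(\mathbb{B}^2)$. Compactness of the trace operator $H^1(\mathbb{B}^2)\to L^2(\partial\mathbb{B}^2)$ gives $u_j\to u_\infty$ strongly in $L^2(\partial\mathbb{B}^2)$. Passing to a further subsequence, we also get a.e. convergence in $\mathbb{B}^2$ and on $\partial\mathbb{B}^2$.

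Next comes the uniform exponential integrability, applied to $pqu_j$ for a fixed $p>1$. By \cref{eq. weak Moser on ball} with $u$ replaced by $\frac{pq}{2}u_j$,
\[
\int_{\mathbb{B}^2}e^{pqu_j}\le C\exp\Big(C q^2p^2\|\nabla u_j\|_2^2+C|q|p\,\|u_j\|_{L^1(\partial\mathbb{B}^2)}\Big).
\]
This is bounded uniformly in $j$, since the trace is bounded in $L^1(\partial\mathbb{B}^2)$ by $H^1$-boundedness. In the same way, \cref{eq. weak LMO inequality} (say with $\epsilon=1$) applied to $pqu_j$ gives a uniform bound on $\int_{\partial\mathbb{B}^2}e^{pqu_j}$. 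Both estimates extend from smooth functions to $H^1$ by density together with Fatou's lemma.

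Finally, to get $L^1$ convergence, I would use the elementary bound $|e^a-e^b|\le |a-b|(e^a+e^b)$ and Hölder's inequality:
\[
\int|e^{qu_j}-e^{qu_\infty}|\le |q|\,\|u_j-u_\infty\|_{L^{p'}}\,\big(\|e^{qu_j}\|_{L^p}+\|e^{qu_\infty}\|_{L^p}\big).
\]
This estimate is used both on $\mathbb{B}^2$ and on $\partial\mathbb{B}^2$. Take $p=2$, so that $p'=2$. The $L^2$ norms of the differences tend to zero by the strong convergences above, and the exponential factors are uniformly bounded by the previous step; the bound for $u_\infty$ follows either from the same inequality or from weak lower semicontinuity. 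Alternatively, one can use Vitali's theorem, since uniform $L^2$ bounds give equi-integrability and we already have a.e. convergence. Either route proves both convergences.

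The main obstacle I expect is the uniform exponential bound on the boundary. It relies on \cref{eq. weak LMO inequality} holding for all of $H^1(\mathbb{B}^2)$, which requires a density argument, and on the boundary averages being controlled by the $H^1$ norm via the trace inequality. Once that is in place, the rest is routine.
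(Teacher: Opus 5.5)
Your proposal is correct and follows essentially the same route as the paper: weak $H^1$ compactness, compact embeddings into $L^2(\mathbb{B}^2)$ and $L^2(\partial\mathbb{B}^2)$, uniform exponential bounds from \cref{eq. weak Moser on ball} and \cref{eq. weak LMO inequality} using the trace control of $\int_{\partial\mathbb{B}^2}u_j$, and then the mean-value estimate combined with Cauchy--Schwarz. You apply those inequalities to suitable multiples of $u_j$, whereas the paper writes the bounds only for $e^{2u_j}$ and $e^{u_j}$; your version gives exactly the bound on $\|e^{qu_j}\|_{L^2}$ for arbitrary $q$ that the paper's Cauchy--Schwarz step needs but leaves implicit.
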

    \begin{proof}
        Notice that $||u_j||_{H^1(\mathbb{B}^2)}$ is uniformly bounded, we know that, up to a subsequence, they weakly converge to a function 
        $u_\infty\in H^1(\mathbb{B}^2)$.

        By Cauchy-Schwarz inequality and Sobolev trace inequality, we get
        \begin{equation}\label{eq. uniform upper bound for boundary integral}
            \left|\int_{\partial \mathbb{B}^2}u_j\right|
        \leq \sqrt{2\pi}
        ||u_j||_{L^2(\partial \mathbb{B}^2)}
        \leq C||u_j||_{H^1(\mathbb{B}^2)}
        \leq C.
        \end{equation}
        Combining with \cref{eq. weak Moser on ball}, it follows that
        \begin{equation}\label{eq. uniform upper bound for e2u}
            \log\left(
        \frac{1}{\pi}\int_{\mathbb{B}^2}e^{2u_j}
        \right)\leq C,\quad \forall j\geq 1.
        \end{equation}
        Likewise, the same upper bound also holds for $u_\infty$.

        Similarly, by \cref{eq. weak LMO inequality} and \cref{eq. uniform upper bound for boundary integral}, we get
        \begin{equation}\label{eq. uniform upper bound for eu}
            \log\left(
        \frac{1}{2\pi}
        \int_{\partial\mathbb{B}^2}e^{u_j}
        \right)\leq C,\quad \forall j\geq 1.
        \end{equation}

        In dimension two, there are two compact embeddings
        \[
        H^1(\mathbb{B}^2)\Subset L^p(\mathbb{B}^2),\quad 
        H^1(\mathbb{B}^2)\hookrightarrow
        H^{\frac{1}{2}}(\mathbb{B}^2)
        \Subset L^p(\partial \mathbb{B}^2),\qquad 
        \forall 1\leq p<+\infty.
        \]
        Hence 
        \begin{equation}\label{eq. strong convergence in Lp}
            u_j\to u_\infty \quad \text{in }L^p(\mathbb{B}^2),\qquad 
        u_j\to u_\infty \quad \text{in }L^p(\partial \mathbb{B}^2),
        \qquad 
        \forall 1\leq p<+\infty.
        \end{equation}
        Notice that, by mean-value theorem,
        \[
        |e^{qs}-e^{qt}|
        \leq |q|(e^{qs}+e^{qt})|s-t|,\quad 
        \forall s,t\in\mathbb{R}.
        \]
        It follows from  Cauchy-Schwarz inequality, \cref{eq. uniform upper bound for e2u} and \cref{eq. strong convergence in Lp} that
        \begin{align*}
            ||e^{qu_j}-e^{qu_\infty}||
            _{L^1(\mathbb{B}^2)}
            &\leq |q|\,
            \big|\big||u_j-u_\infty|(e^{qu_j}+e^{qu_\infty})\big|\big|
            _{L^1(\mathbb{B}^2)}\\[2mm]
            &\leq |q|\, 
            ||u_j-u_\infty||_{L^2(\mathbb{B}^2)}
            \left(
            ||e^{qu_j}||_{L^2(\mathbb{B}^2)}
            +||e^{qu_\infty}||_{L^2(\mathbb{B}^2)}
            \right)\\[2mm]
            &\leq C|q|\, 
            ||u_j-u_\infty||
            _{L^2(\mathbb{B}^2)}  \to 0,
            \quad \text{as }j\to+\infty.
        \end{align*}
        Similarly, by \cref{eq. uniform upper bound for eu} and \cref{eq. strong convergence in Lp}, one obtains
        $||e^{qu_j}- e^{qu_\infty}||_{L^1(\partial \mathbb{B}^2)}\to 0.$
    \end{proof}
We are now in a position to prove  \cref{thm. sharp Sobolev-trace inequality} when $\alpha<1$. 
    \begin{proof}[Proof of \cref{thm. sharp Sobolev-trace inequality} when $\alpha<1$]
        Since $\alpha<1$, we can choose $\lambda\in(\max\{\alpha,0\},1)$.
        
        {\bf Step 1:} We claim that $F_{\lambda,\alpha}$ is coercive and is bounded from below.     
        Indeed, if $0\leq \alpha<1$, then we rewrite $F_{\lambda,\alpha}$ as
        \begin{align*}
            F_{\lambda,\alpha}(u)
        &= \frac{1-\lambda-\epsilon(\lambda-\alpha)}{4\pi}
        \int_{\mathbb{B}^2}|\nabla u|^2
        +\frac{\alpha}{2}\left(
        \frac{1}{2\pi}
        \int_{\mathbb{B}^2}|\nabla u|^2
        +\frac{1}{\pi}
        \int_{\partial \mathbb{B}^2}u
        -\log\left(\frac{1}{\pi}\int_{\mathbb{B}^2}e^{2u}\right)
        \right)\\
        & +(\lambda-\alpha)
        \left(
        \frac{1+\epsilon}{4\pi}
        \int_{\mathbb{B}^2}|\nabla u|^2
        +\frac{1}{2\pi}
        \int_{\partial \mathbb{B}^2}u
        -\log\left(\frac{1}{2\pi}
        \int_{\partial\mathbb{B}^2}e^{u}\right)
        \right).
        \end{align*}
        By \cref{eq. weak Moser on ball} and \cref{eq. weak LMO inequality}, we derive
        \[
        F_{\lambda,\alpha}(u)
        \geq \frac{1-\lambda-\epsilon(\lambda-\alpha)}{4\pi}
        \int_{\mathbb{B}^2}|\nabla u|^2
        -C_\epsilon.
        \]
        Fix $0<\epsilon\ll 1$, such that $1-\lambda-\epsilon(\lambda-\alpha)>0$. Then we get that $F_{\lambda,\alpha}$ is coercive and is bounded from below.

        If $\alpha<0$, then we rewrite $F_{\lambda,\alpha}$ as
        \begin{align*}
            F_{\lambda,\alpha}(u)
        =\frac{1-\lambda-\epsilon\lambda}{4\pi}
        \int_{\mathbb{B}^2}|\nabla u|^2
        +&\lambda
        \left(
        \frac{1+\epsilon}{4\pi}
        \int_{\mathbb{B}^2}|\nabla u|^2
        +\frac{1}{2\pi}
        \int_{\partial \mathbb{B}^2}u
        -\log\left(\frac{1}{2\pi}
        \int_{\partial\mathbb{B}^2}e^{u}\right)
        \right)\\
        &+\frac{\alpha}{2}\log
        \left(
        \frac{\left(\frac{1}{2\pi}
        \int_{\partial\mathbb{B}^2}
        e^{u}\right)^2}{\frac{1}{\pi}\int_{\mathbb{B}^2}e^{2u}}
        \right).
        \end{align*}
        It follows from \cref{eq. weak LMO inequality} and \cref{eq. isoperimetric upper bound} that
        \[
        F_{\lambda,\alpha}
        \geq \frac{1-\lambda-\epsilon\lambda}{4\pi}
        \int_{\mathbb{B}^2}|\nabla u|^2
        +\alpha\log\left(
        1+\left(\frac{1}{4\pi}\int_{\mathbb{B}^2}|\nabla u|^2\right)^{\frac{1}{2}}
        \right)
        -C_\epsilon.
        \]
        After fixing $\epsilon$ such that $1-\lambda-\epsilon\lambda>0$, we also get that $F_{\lambda,\alpha}$ is coercive and is bounded from below. This completes the proof of our claim.

        {\bf Step 2:} Let $\{u_j\}$ be a minimizing sequence of $F_{\lambda,\alpha}$, then the coercivity of $F_{\lambda,\alpha}$ implies a uniform upper bound of the Dirichlet energy
        \[
        \int_{\mathbb{B}^2}|\nabla u_j|^2\leq C.
        \]
        Notice that $F_{\lambda,\alpha}$ is invariant under shifting $u\mapsto u+c$, after subtracting a constant, we could assume
        \[
        \int_{\mathbb{B}^2}u_j=0.
        \]
        Then it follows from the Poincar\'e's inequality that
        \[
        ||u_j||_{H^1(\mathbb{B}^2)}\leq C.
        \]
        Then by \cref{lem. compactness in H1}, there exists $u_\infty\in H^1(\mathbb{B}^2)$ with $e^{2u_j}\to e^{2u_\infty}$ in $L^1(\mathbb{B}^2)$, $e^{u_j}\to e^{u_\infty}$ in $L^1(\partial\mathbb{B}^2)$. Combining these with the lower semi-continuity of the semi-norm of $H^1(\mathbb{B}^2)$, we deduce that $u_\infty$ is a minimizer of $F_{\lambda,\alpha}$. 
        
        A direct calculation of the Euler-Lagrange equation of the functional $F_{\lambda,\alpha}$ \cref{eq. def of functional F} yields
        \begin{equation}\label{eq. equation of minimizer}
            \begin{cases}
               \displaystyle{ -\Delta u_\infty
                =\frac{2\pi\alpha}{\int_{\mathbb{B}^2}e^{2u_\infty}}e^{2u_\infty}} &\text{in }\mathbb{B}^2,\\[5mm]
                \displaystyle{\frac{\partial u_\infty}{\partial\nu}
                +\lambda
                =\frac{2\pi(\lambda-\alpha)}{\int_{\partial \mathbb{B}^2}e^{u_\infty}}e^{u_\infty}}
                &\text{on }\partial \mathbb{B}^2.
            \end{cases}
        \end{equation}
        This is exactly the equation \cref{eq. conformal curvature equation} of \cref{ClassificationTh} with
        \begin{equation*}\label{eq. determine K and k}
            K=\frac{2\pi\alpha}{\int_{\mathbb{B}^2}e^{2u_\infty}},\quad k=\frac{2\pi(\lambda-\alpha)}{\int_{\partial \mathbb{B}^2}e^{u_\infty}}.    
        \end{equation*}
        
        {\bf Step 3:}
        By \cref{ClassificationTh}, noting that $0<\lambda<1$, the solution $u_\infty$ must be radially symmetric and is of the form
        \[
        u_\infty=w_{q_\infty}+c_\infty,
        \]
        where $w_{q_\infty}$ is defined in \cref{eq. def of wq} and $q_\infty>-1$, $c_\infty\in\mathbb{R}$ are two constants depending on $\alpha$ and $\lambda$.
        Note that a direct calculation gives
        \[
        -\Delta w_{q_\infty}
        =\frac{4q_\infty}{(1+q_\infty)^2}
        e^{2w_{q_\infty}},
        \]
        we get
        \[
        -\Delta u_\infty
        =\frac{4q_\infty}{(1+q_\infty)^2}e^{-2c_\infty}
        e^{2u_\infty}.
        \]
        Combining this with \cref{eq. equation of minimizer}, we could determine $q_\infty$ as follows:
        \[
        \alpha=\frac{1}{2\pi}
        \frac{4q_\infty}{(1+q_\infty)^2}
        \int_{\mathbb{B}^2}
        e^{2u_\infty}e^{-2c_\infty}
        =\frac{2q_\infty}{1+q_\infty}.
        \]
        where we used \cref{eq. area of wq} in the last equality. Equivalently,
        \begin{equation}\label{eq. determine q infty}
            q_\infty=\frac{\alpha}{2-\alpha}.
        \end{equation}
        By shifting-invariance of $F_{\lambda,\alpha}$, \cref{eq. determine q infty} and \cref{lem. F of wq}, we get
        \[
        \inf_{u\in H^1(\mathbb{B}^2)}
        F_{\lambda,\alpha}(u)
        =F_{\lambda,\alpha}(u_\infty)
        =F_{\lambda,\alpha}
        (w_{\frac{\alpha}{2-\alpha}})
        =-\frac{\alpha}{2}
        -\left(1-\frac{\alpha}{2}\right)
        \log \left(1-\frac{\alpha}{2}\right).
        \]
        Letting $\lambda$ go to $1$ proves
        \[
        F_{1,\alpha}(u)\geq 
         -\frac{\alpha}{2}
        -\left(1-\frac{\alpha}{2}\right)
        \log \left(1-\frac{\alpha}{2}\right)
        =F_{1,\alpha}(w_{\frac{\alpha}{2-\alpha}}).
        \]
        Finally, by \cref{ClassificationTh}, any critical points of $F_{1,\alpha}$ is of the form \cref{eq. minimizer} and 
        the proof finishes.
    \end{proof}

    \section{Proof of \cref{thm. sharp Sobolev-trace inequality} for $1\leq \alpha<2$}
    \label{sec. functional ineq for large alpha}
    In this section, we prove \cref{thm. sharp Sobolev-trace inequality} for $\alpha\in[1,2)$.
    \begin{theorem}\label{thm. functional ineq for alpha larger than 1}
        For  $\alpha\in[1,2)$, there holds
        \[
        F_{1,\alpha}(u)
        \geq -\frac{\alpha}{2}
        -\left(1-\frac{\alpha}{2}\right)
        \log\left(1-\frac{\alpha}{2}\right),\qquad 
        \forall u\in H^1(\mathbb{D}^2).
        \]
        Moreover, the equality holds if and only if
        \begin{equation*}
            u=T_{\phi_a}w_{\frac{\alpha}{2-\alpha}}+c,\qquad \forall c\in\mathbb{R},
        \end{equation*}
        where  
        \[
        \phi_a(z):=\frac{z+a}{1+\bar{a}z},\quad a\in\mathbb{B}^2,
        \]
        is a M\"obius transformation, and 
        \[
        T_\phi w:=w\circ\phi+\log |\phi'|.
        \]
    \end{theorem}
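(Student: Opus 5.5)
The plan is to use the direct method again, but only after removing the non-compactness produced by the Möbius group. At $\lambda=1$ the functional $F_{1,\alpha}$ is invariant both under $u\mapsto T_\phi u$ for $\phi\in\operatorname{Aut}(\mathbb{B}^2)$ and under $u\mapsto u+c$. The invariance under $T_\phi$ is a change of variables: the Dirichlet term, the boundary linear term and both logarithmic terms transform so that the contributions of $\log|\phi'|$ cancel, since $\int_{\partial\mathbb{B}^2}\log|\phi'|=0$ and $\frac{1}{4\pi}\int|\nabla\log|\phi'||^2$ is compensated by the cross terms. Given $u$, I will therefore first choose $a\in\mathbb{B}^2$ so that $\tilde u=T_{\phi_a}u$ has vanishing barycenter, for instance
\[
\int_{\partial\mathbb{B}^2}x\,e^{\tilde u}=0 \quad\text{or}\quad \int_{\mathbb{B}^2}x\,e^{2\tilde u}=0 .
\]
This normalization is achieved by the standard degree/fixed-point argument: the map $a\mapsto$ barycenter is continuous on $\mathbb{B}^2$ and points outward near $\partial\mathbb{B}^2$, because the measure concentrates at $-a/|a|$ (or $a/|a|$).

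On functions with vanishing barycenter I will invoke the Aubin-type improvement announced in the introduction (\cref{lem. improved weak Moser on ball}). It should have the form
\[
\log\left(\frac1\pi\int_{\mathbb{B}^2}e^{2u}\right)\leq\Big(\tfrac12+\delta\Big)\frac{1}{2\pi}\int|\nabla u|^2+\frac1\pi\int_{\partial\mathbb{B}^2}u+C_\delta ,
\]
possibly with a companion estimate for the boundary term. The proof would follow Aubin's: pass to $\mathbb{S}^2$ via the even reflection used for \cref{eq. weak Moser on ball}, split $u$ with cutoffs near two regions, and use the zero barycenter to guarantee that $e^{2u}$ is not concentrated on one region, which halves the constant. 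Inserting this into $F_{1,\alpha}$, written as the combination used in Step 1 of the $\alpha<1$ case, leaves a Dirichlet coefficient of order $\frac{1}{4\pi}\bigl(1-\frac{\alpha}{2}(1+2\delta)\bigr)$ or similar. This is positive for $\alpha<2$ and $\delta$ small, so $F_{1,\alpha}$ is coercive on the normalized class. For $1\le\alpha<2$ the term $-(1-\alpha)\log\frac{1}{2\pi}\int e^u$ has a nonnegative coefficient. I will control it by \cref{eq. isoperimetric upper bound}, as in the $\alpha<0$ case, which costs only a logarithm of the Dirichlet energy.

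Next I take a minimizing sequence and normalize it by Möbius action, by $\int_{\mathbb{B}^2}u_j=0$, and by the barycenter condition. Coercivity and Poincaré then bound it in $H^1$, and \cref{lem. compactness in H1} yields a minimizer $u_\infty$. Its Euler–Lagrange equation is \cref{eq. equation of minimizer} with $\lambda=1$, $K=2\pi\alpha/\int e^{2u_\infty}>0$, and $k$ of whatever sign. \cref{ClassificationTh} with $K>0$ and $\lambda=1$ gives $u_\infty=T_\phi(w_q)+c$ for some $q$. The interior equation together with \cref{eq. area of wq} forces $\alpha=2q/(1+q)$, that is $q=\alpha/(2-\alpha)$, just as in Step 3 of the $\alpha<1$ case, and \cref{lem. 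F of wq} then gives the value of the infimum.

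For the equality case, any minimizer is a critical point, so \cref{ClassificationTh} and the same identification of $q$ give the stated family; conversely, these functions attain the bound by invariance. The main obstacle is the Aubin-type lemma, in particular making sure that the barycenter condition compatible with the Möbius normalization actually yields the improved constant in the mixed interior/boundary setting, including control of the boundary exponential when $\alpha\ne1$. I would first try to reduce everything to the sphere via reflection and then apply Aubin's improved inequality on $\mathbb{S}^2$ with the reflection-invariant barycenter condition $\int_{\mathbb{S}^2}x' e^{2V}=0$ in the horizontal directions. That condition corresponds to $\int_{\mathbb{B}^2}x\,e^{2u}=0$ after the conformal change.
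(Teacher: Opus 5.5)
Your architecture is the paper's: M\"obius invariance of $F_{1,\alpha}$, Hersch-type normalization of the interior barycenter, the Aubin-type bound of \cref{lem. improved weak Moser on ball} with constant $\frac{1+\epsilon}{4\pi}$, the direct method, and \cref{ClassificationTh} with $K>0$, $\lambda=1$ plus the area identity to get $q=\frac{\alpha}{2-\alpha}$. However, the coercivity step fails as you wrote it. After inserting the Aubin bound, the Dirichlet coefficient is indeed $\frac{2-(1+\epsilon)\alpha}{8\pi}$, and what remains is
\[
(\alpha-1)\Big(\log\Big(\frac{1}{2\pi}\int_{\partial\mathbb{B}^2}e^{u}\Big)-\frac{1}{2\pi}\int_{\partial\mathbb{B}^2}u\Big),\qquad \alpha-1\geq 0 .
\]
Since this coefficient is nonnegative, you need a \emph{lower} bound for $\log(\frac{1}{2\pi}\int_{\partial\mathbb{B}^2}e^u)$. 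But \cref{eq. isoperimetric upper bound} gives an \emph{upper} bound for it, which is useless here. It worked for $\alpha<0$ only because there the term $\frac{\alpha}{2}\log\big((\frac{1}{2\pi}\int_{\partial\mathbb{B}^2}e^u)^2/(\frac1\pi\int_{\mathbb{B}^2}e^{2u})\big)$ has a negative coefficient; for $\alpha\ge 1$ that coefficient is positive.

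The missing ingredient is simply Jensen's inequality $\log(\frac{1}{2\pi}\int_{\partial\mathbb{B}^2}e^u)\ge\frac{1}{2\pi}\int_{\partial\mathbb{B}^2}u$. It makes the bracket nonnegative and gives $F_{1,\alpha}(u)\ge\frac{2-(1+\epsilon)\alpha}{8\pi}\int_{\mathbb{B}^2}|\nabla u|^2-C_\epsilon$ on the normalized class, exactly as in the paper. For the same reason, the Step-1 decomposition of the $\alpha<1$ case does not transfer: the weight $1-\alpha$ on the Lebedev--Milin-type deficit is now $\le 0$, so \cref{eq. weak LMO inequality} points the wrong way.

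Two smaller remarks. First, your reflection route to the Aubin lemma is a legitimate alternative to the paper's direct proof on the disk via \cref{lem. concentration for measures} and spectral splitting; it lets you quote Aubin's inequality on $\mathbb{S}^2$ for the even extension as a black box. But the horizontal barycenter on $\mathbb{S}^2$ corresponds to $\int_{\mathbb{B}^2}\frac{2x}{1+|x|^2}e^{2u}=0$, not to $\int_{\mathbb{B}^2}xe^{2u}=0$. You should therefore Hersch-normalize this weighted barycenter; the degree argument is unchanged, since the weight is the identity on $\partial\mathbb{B}^2$. Also, with $\phi=\log\frac{2}{1+|x|^2}$, the reflected inequality produces $\frac{1}{2\pi}\int_{\mathbb{B}^2}ue^{2\phi}+\frac{1}{2\pi}\int_{\partial\mathbb{B}^2}u$ in place of $\frac1\pi\int_{\partial\mathbb{B}^2}u$. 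You must reconcile these with an estimate like \cref{eq. difference between interior mean and boundary mean}.

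Second, on the positive side, normalizing each element of a minimizing sequence for the unconstrained infimum directly produces a global minimizer on $H^1(\mathbb{B}^2)$. The Euler--Lagrange equation is then immediate, which is cleaner than the paper's argument via a smooth family $\phi_t$ showing that a constrained minimizer is an unconstrained critical point.
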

    When $\alpha\in[1,2)$, the functional $F_{\lambda,\alpha}$ is not coercive on $H^1(\mathbb{B}^2)$ and the previous direct method does not hold any more. We shall recover the coercivity within the class of functions with vanishing barycenter (see \cref{lem. improved weak Moser on ball}).

    First, we show conformal invariance of  $F_{\lambda,\alpha}$ under the Mobius action $T_\phi$.
    \begin{lemma}\label{lem. conformal invariance of the functional}
        Let $u\in H^1(\mathbb{B}^2)$ and $\phi$ be a M\"obius transformation. Then
        \begin{align*}
            \int_{\mathbb{B}^2}e^{2T_\phi u}
        =\int_{\mathbb{B}^2} e^{2u},
        \quad
        \int_{\partial\mathbb{B}^2}e^{T_\phi u}
        =\int_{\partial\mathbb{B}^2} e^{u},\\
        \int_{\mathbb{B}^2}
        |\nabla T_\phi u|^2
        +2\int_{\partial\mathbb{B}^2}
        T_\phi u
        =\int_{\mathbb{B}^2} |\nabla u|^2
        +2\int_{\partial \mathbb{B}^2}u.
        \end{align*}
    \end{lemma}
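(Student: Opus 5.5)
The plan is to reduce everything to the change of variables formula for the holomorphic map $\phi$, first for smooth $u$ and then by density in $H^1(\mathbb{B}^2)$. Write $\phi\in\operatorname{Aut}(\mathbb{B}^2)$ as a holomorphic bijection of the closed disk. Its real Jacobian is $|\phi'|^2$, and it restricts to a diffeomorphism of $\partial\mathbb{B}^2$ whose arclength Jacobian is $|\phi'|$.

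\textbf{Exponential integrals.} These follow at once from the change of variables:
\[
\int_{\mathbb{B}^2}e^{2T_\phi u}=\int_{\mathbb{B}^2}e^{2u\circ\phi}|\phi'|^2=\int_{\mathbb{B}^2}e^{2u}.
\]
Similarly, on the boundary,
\[
\int_{\partial\mathbb{B}^2}e^{T_\phi u}=\int_{\partial\mathbb{B}^2}e^{u\circ\phi}|\phi'|\,\dd s=\int_{\partial\mathbb{B}^2}e^{u}.
\]

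\textbf{Dirichlet-type identity.} Set $\psi:=\log|\phi'|$. Since $\phi'$ is holomorphic and nonvanishing on a neighborhood of $\overline{\mathbb{B}^2}$, $\psi$ is harmonic there. Expanding,
\[
|\nabla T_\phi u|^2=|\nabla(u\circ\phi)|^2+2\langle\nabla(u\circ\phi),\nabla\psi\rangle+|\nabla\psi|^2.
\]
The Dirichlet energy is conformally invariant, so $\int|\nabla(u\circ\phi)|^2=\int|\nabla u|^2$. Because $\Delta\psi=0$, integrating by parts gives
\[
\int_{\mathbb{B}^2}\langle\nabla(u\circ\phi),\nabla\psi\rangle=\int_{\partial\mathbb{B}^2}(u\circ\phi)\,\partial_\nu\psi,
\qquad
\int_{\mathbb{B}^2}|\nabla\psi|^2=\int_{\partial\mathbb{B}^2}\psi\,\partial_\nu\psi.
\]

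\textbf{Key boundary fact.} I will use $\partial_\nu\psi=|\phi'|-1$ on $\partial\mathbb{B}^2$. This is the $\lambda=1$, $K=0$, $k=1$ instance of the boundary equation: $\log|\phi'|$ is the conformal factor of the flat pulled-back metric, whose boundary geodesic curvature is $1$. It can also be checked directly for $\phi_a(z)=e^{i\theta}\frac{z+a}{1+\bar a z}$, where $\psi=\log(1-|a|^2)-2\log|1+\bar a z|$; a short computation of $r\partial_r$ on $|z|=1$ gives $\partial_\nu\psi=\frac{1-|a|^2}{|1+\bar az|^2}-1=|\phi'|-1$.

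\textbf{Assembling.} Adding $2\int_{\partial}T_\phi u=2\int_\partial(u\circ\phi)+2\int_\partial\psi$, the left-hand side becomes
\[
\int|\nabla u|^2+2\int_\partial (u\circ\phi)|\phi'|+\int_\partial\psi(|\phi'|-1)+2\int_\partial\psi .
\]
By the boundary change of variables, $\int_\partial (u\circ\phi)|\phi'|=\int_\partial u$. The remaining constant is $\int_\partial\psi|\phi'|+\int_\partial\psi$, and it must vanish. Since $\psi$ is harmonic, $\int_{\partial}\psi=2\pi\psi(0)=2\pi\log(1-|a|^2)$. Also $\int_\partial\psi|\phi'|=\int_\partial\log|\phi'|\circ\phi^{-1}\,\dd s=-\int_\partial\log|(\phi^{-1})'|=-2\pi\log(1-|a|^2)$, since $\phi^{-1}$ is of the same form with $a$ replaced by $-ae^{i\theta}$, up to rotation. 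So the constant cancels and the third identity follows.

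\textbf{Main obstacle.} The only step requiring care is this last cancellation of boundary constants, which relies on the explicit form of $\phi$ and on the mean value property. The final density step is routine: all three quantities are continuous on $H^1(\mathbb{B}^2)$, by \cref{lem. compactness in H1}-type estimates and the trace theorem, and $u\mapsto T_\phi u$ is continuous on $H^1$, so the identities extend from smooth $u$ to all of $H^1(\mathbb{B}^2)$.
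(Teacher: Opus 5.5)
Your proof is correct and follows the paper's argument: you use change of variables for the two exponential integrals, expand $T_\phi u=u\circ\phi+\log|\phi'|$, invoke conformal invariance of the Dirichlet energy, and integrate by parts using that $\log|\phi'|$ is harmonic with $\partial_\nu\log|\phi'|=|\phi'|-1$ on $\partial\mathbb{B}^2$. The only difference is that you actually verify the leftover constant $\int_{\mathbb{B}^2}|\nabla\psi|^2+2\int_{\partial\mathbb{B}^2}\psi=0$ (via the mean value property and the inverse map $\phi^{-1}$), which the paper dismisses as a straightforward calculation.
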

    \begin{proof}
        Note that $\phi:\mathbb{B}^2\to \mathbb{B}^2$ is a conformal map, and
        \[
        e^{2T_\phi u}g_0
        =e^{2u\circ\phi}|\phi'|^2g_0
        =\phi^*\left(e^{2u}g_0\right).
        \]
        The first two identities then follows from change of variables.

        Denote $\log|\phi'|$ by $\varphi$, then we expand $T_\phi u$ and calculate
        \[
\begin{aligned}
    &\int_{\mathbb{B}^2}
        |\nabla T_\phi u|^2
        +2\int_{\partial\mathbb{B}^2}T_\phi u
        \\
    =&
    \int_{\mathbb {B}^2}
    |\nabla(u\circ\phi)|^2
    +2\int_{\mathbb {B}^2}
        \left\langle \nabla(u\circ\phi),\nabla\varphi\right\rangle        
    +\int_{\mathbb {B}^2}
    |\nabla\varphi|^2
    +2\int_{\partial\mathbb{B}^2}u\circ\phi
    +2\int_{\partial\mathbb {B}^2}\varphi.
\end{aligned}
\]
Since the Dirichlet energy is conformally invariant in dimension \(2\), the first term is
\[
    \int_{\mathbb{B}^2}
    |\nabla(u\circ\phi)|^2
    =
    \int_{\mathbb{B}^2}|\nabla u|^2.
\]
Next, notice that both \(g_0\) and \(\phi^*g_0=e^{2\varphi}g_0\) have Gauss curvature \(0\) and geodesic curvature $1$, so $-\Delta \varphi=0$ in $\mathbb{B}^2$ and $\frac{\partial\varphi}{\partial\nu}+1=e^\varphi$ on $\partial\mathbb{B}^2$.
Then by integration by parts,
\[
\begin{aligned}
    2\int_{\mathbb{B}^2}
        \left\langle \nabla(u\circ\phi),\nabla\varphi
        \right\rangle 
        +2\int_{\partial\mathbb{B}^2}u\circ\phi
    =&-2\int_{\mathbb{B}^2}(u\circ\phi)\Delta\varphi
    +2\int_{\partial\mathbb{B}^2}
    \left((u\circ\phi)\frac{\partial\varphi}{\partial\nu}
    +u\circ\phi\right)\\
    =&2\int_{\partial\mathbb{B}^2}
    (u\circ\phi)    e^\varphi 
    =2\int_{\partial\mathbb{B}^2} u.
\end{aligned}
\]
Substituting these two identities into the previous expansion,  we obtain
\[
\begin{aligned}
    \int_{\mathbb{B}^2}
        |\nabla T_\phi u|^2
        +2\int_{\partial\mathbb{B}^2}T_\phi u
        =\int_{\mathbb{B}^2}|\nabla u|^2
         +2\int_{\partial\mathbb{B}^2}u 
         +\int_{\mathbb{B}^2}|\nabla\varphi|^2
         +2\int_{\partial\mathbb{B}^2}\varphi .
\end{aligned}
\]
    Finally, a straightforward calculation yields
    \[
    \int_{\mathbb{B}^2}|\nabla\varphi|^2
    +2\int_{\partial\mathbb{B}^2}\varphi 
    =0,
    \]
and the proof finishes.
    \end{proof}
    Next, we fix the barycenter to be zero via a Hersch-type argument.
    \begin{lemma}
    \label{lem. Disk Hersch normalization}
        For any $u\in H^1(\mathbb{B}^2)$, there exists a M\"obius transformation $\phi$, such that
        \[
        \int_{\mathbb{B}^2}ze^{2T_\phi u(z)}=0,
        \]
        where we use the complex coordinate $z=x_1+ix_2$.
    \end{lemma}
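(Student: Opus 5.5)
We follow Hersch's classical degree argument, carried out with the one-parameter family $\phi_a(z)=\frac{z+a}{1+\bar a z}$, $a\in\mathbb{B}^2$. The first observation comes from \cref{lem. conformal invariance of the functional}: $e^{2T_\phi u}\,dx=\phi^*(e^{2u}dx)$. Changing variables $w=\phi(z)$ therefore gives
\[
\int_{\mathbb{B}^2} z\, e^{2T_{\phi}u(z)}\,dz=\int_{\mathbb{B}^2}\phi^{-1}(w)\, e^{2u(w)}\,dw .
\]
So it suffices to find $a\in\mathbb{B}^2$ with $G(a)=0$, where
\[
G(a):=\int_{\mathbb{B}^2}\phi_{-a}(w)\,d\mu(w),\qquad d\mu:=e^{2u}dw,\qquad \phi_{-a}(w)=\frac{w-a}{1-\bar a w}.
\]
Here $\mu$ is a finite positive measure, since $e^{2u}\in L^1$ by \cref{eq. weak Moser on ball}, and it is absolutely continuous. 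We then take $\phi=\phi_a$.

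The first step is to check that $G$ is continuous on $\mathbb{B}^2$ and extends continuously to $\overline{\mathbb{B}^2}$. For fixed $w\in\mathbb{B}^2$ and $|a|\to1$, with $a\to\zeta\in\partial\mathbb{B}^2$, we have $\phi_{-a}(w)\to\frac{w-\zeta}{1-\bar\zeta w}=-\zeta$. This uses $\bar\zeta\zeta=1$: indeed $w-\zeta=-\zeta(1-\bar\zeta w)$. Moreover $|\phi_{-a}(w)|\le1$ everywhere, so dominated convergence gives $G(a)\to-\zeta\,\mu(\mathbb{B}^2)$. Continuity at interior points follows in the same way. Hence $G$ extends to a continuous map $\overline{\mathbb{B}^2}\to\overline{\mathbb{B}^2}\cdot\mu(\mathbb{B}^2)$, and on the boundary it equals $-\mu(\mathbb{B}^2)\,\mathrm{id}$.

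The second step is a topological argument. Suppose $G$ had no zero. Then
\[
H(a):=-\frac{G(a)}{|G(a)|}
\]
would be a retraction of $\overline{\mathbb{B}^2}$ onto $\partial\mathbb{B}^2$, since it restricts to the identity on the boundary. Equivalently, the identity map of $\mathbb{S}^1$ would be null-homotopic, which is impossible; this is Brouwer's no-retraction theorem. So $G(a_0)=0$ for some $a_0\in\mathbb{B}^2$, and $\phi=\phi_{a_0}$ is the required Möbius transformation. One can also phrase this with Brouwer's fixed point theorem applied to $a\mapsto a+G(a)/\mu(\mathbb{B}^2)$ after a suitable rescaling, but the retraction argument is cleaner.

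The main point needing care is the boundary behaviour. One must verify the pointwise limit $\phi_{-a}(w)\to-\zeta$ for every interior $w$ when $a\to\zeta$. The measure $\mu$ gives no mass to $\partial\mathbb{B}^2$, so the points of the closed disk where this convergence fails do not matter. Combined with the uniform bound $|\phi_{-a}|\le 1$, this makes the dominated convergence step legitimate. Everything else is a routine change of variables.
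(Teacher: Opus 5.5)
Your proposal is correct and is essentially the paper's argument. The paper considers $\Phi(a)=\int_{\mathbb{B}^2}\phi_a(z)e^{2u(z)}$, which on $\partial\mathbb{B}^2$ reduces to $a\int_{\mathbb{B}^2}e^{2u}$, and invokes a degree argument; you use $\phi_{-a}$ (the same map up to $a\mapsto -a$) together with the no-retraction theorem, and you are slightly more careful than the paper in justifying the continuous extension to $\overline{\mathbb{B}^2}$ by dominated convergence and in keeping track of the mass factor $\mu(\mathbb{B}^2)$, which the paper tacitly normalizes to $1$ when it writes $\Phi|_{\partial\mathbb{B}^2}=\mathrm{Id}$.
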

    \begin{proof}
        For fixed $u\in H^1(\mathbb{B}^2)$, consider the continuous map
        \begin{align*}
            \Phi:\overline{\mathbb{B}^2}\to& \overline{\mathbb{B}^2}\\
            a\mapsto& \int_{\mathbb{B}^2}
            \phi_a(z)e^{2u(z)}.
        \end{align*}
        where  
        \begin{equation}
        \label{eq. Mobius transformation}
            \phi_a(z):=\frac{z+a}{1+\bar{a}z},\quad a\in\mathbb{B}^2,
        \end{equation}
        is a M\"obius transformation.
        Note that 
        \[
        \phi_a\equiv a, \quad \text{if }a\in\partial \mathbb{B}^2.
        \]
        It follows that
        \[
        \Phi|_{\partial\mathbb{B}^2}
        =\operatorname{Id}|
        _{\partial\mathbb{B}^2}.
        \]
        Therefore, a degree argument implies that $\Phi$ is a surjective map, and hence there exists $a\in\mathbb{B}^2$ such that $\Phi(a)=0$.
        Then, by change of variables,  we conclude that
        \[
        \int_{\mathbb{B}^2}ze^{2T_{\phi_{-a}}u}
        =\int_{\mathbb{B}^2}ze^{2u\circ \phi_{-a}}|\phi_{-a}'|^2
        =\int_{\mathbb{B}^2} \phi_a(z)e^{2u(z)}
        =0.
        \]
    \end{proof}

    Now we prove an improved version of \cref{eq. weak Moser on ball} under the addition vanishing barycenter constraint. Such improvement phenomenon could be traced back to the seminal work of Aubin \cite{Aub79}, and is further developed in many subsequent works, for example, \cite[Theorem 2.1]{CL91}, 
    \cite[Lemma 2.3]{Wan01},
    \cite[Proposition 2.2]{MR11}, 
    and \cite[Lemma 2.4]{L-SR16}.
    \begin{lemma}\label{lem. improved weak Moser on ball}
        For any $\epsilon>0$, there exists a constant $C_\epsilon>0$ such that, for any $u\in H^1(\mathbb{B}^2)$ with 
        \[
        \int_{\mathbb{B}^2}ze^{2u(z)}=0,
        \]
        there holds
        \[
        \log\left(
        \frac{1}{\pi}\int_{\mathbb{B}^2}e^{2u}
        \right)
        \leq \frac{1+\epsilon}{4\pi}\int_{\mathbb{B}^2}
        |\nabla u|^2
        +\frac{1}{\pi}
        \int_{\partial\mathbb{B}^2} u
        +C_\epsilon.
        \]
    \end{lemma}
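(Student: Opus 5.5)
We transfer the problem to the round sphere, as in the proof of \cref{eq. weak Moser on ball}, and then apply Aubin's improvement of the Moser--Trudinger inequality for functions whose conformal volume has vanishing first moments. With $\phi(x)=\log\frac{2}{1+|x|^2}$, set $v=u-\phi$ and let $V$ be the even extension of $v$ to $\mathbb{S}^2$ across the equator. The computation in the proof of \cref{eq. weak Moser on ball} shows that
\[
\frac{1}{2\pi}\int_{\mathbb{B}^2}|\nabla v|^2+\frac{1}{\pi}\int_{\mathbb{B}^2}ve^{2\phi}=\frac{1}{2\pi}\int_{\mathbb{B}^2}|\nabla u|^2+\frac{1}{\pi}\int_{\partial\mathbb{B}^2}u-2\log2+1 .
\]
On the sphere, the left-hand side equals $\frac{1}{4\pi}\int_{\mathbb{S}^2}|\nabla V|^2+\frac{1}{2\pi}\int_{\mathbb{S}^2}V$.

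Next we check the moment conditions for $e^{2V}$ on $\mathbb{S}^2$. The hemisphere coordinates satisfy $(x_1,x_2)\mapsto \frac{2x}{1+|x|^2}$ on the horizontal components. The hypothesis $\int z e^{2u}=0$ is not literally $\int \frac{2x}{1+|x|^2}e^{2u}=0$, so the first thing to fix is the right normalization. Since the even reflection makes the $x_3$-moment vanish automatically, we need only the horizontal moments. The cleanest way to get them is to apply Aubin's lemma in its robust form rather than the exact form: for every $\epsilon>0$ there is $C_\epsilon$ such that, whenever $\int_{\mathbb{S}^2} x_i e^{2V}\,/\int e^{2V}$ is bounded in absolute value by some fixed $\delta<1$ (depending on $\epsilon$) for all $i$, one has
\[
\log\Big(\frac{1}{4\pi}\int e^{2V}\Big)\le \frac{1+\epsilon}{8\pi}\int|\nabla V|^2+\frac{1}{2\pi}\int V+C_\epsilon .
\]
This concentration-compactness form is what one actually proves. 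Indeed, if the mass of $e^{2V}$ does not concentrate near a single point, then a partition of unity with two cutoffs supported on well-separated regions gives the halved constant. Concentration at a point $p$ would force the normalized first moment close to $p$, which has norm $1$.

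It then suffices to show that the normalized first moment of $e^{2V}$ cannot approach a unit vector. For the $x_3$-component this is clear, since by symmetry it equals $0$. For the horizontal component, concentration near a point $p$ of the closed upper hemisphere would, back on the disk, mean that $e^{2u}\,dx$ concentrates near the corresponding point $y\in\overline{\mathbb{B}^2}$. By the barycenter condition, $y$ is forced to be $0$ (up to small error). The point $0$ corresponds to the north pole, whose horizontal part is zero, so the normalized moment cannot have norm near $1$. We make this quantitative by a compactness-and-contradiction argument over the probability measures $e^{2u}dx/\int e^{2u}$: weak limits $\mu$ with $\int z\,d\mu=0$ are not Dirac masses on $\partial\mathbb{B}^2$ or Dirac masses at points other than $0$, and a Dirac mass at $0$ corresponds to the north pole, for which the full moment of the even extension is $0$ in norm. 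This is where the constant may degenerate, so the localized Aubin lemma should be applied directly: either $e^{2V}$ has mass in two separated regions (giving the halved constant), or it concentrates near a single point, which by symmetry must lie on the equator, i.e.\ $e^{2u}$ concentrates at a boundary point. The barycenter condition rules out the latter.

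Translating back, and using $\int_{\mathbb{S}^2}e^{2V}=2\int_{\mathbb{B}^2}e^{2u}$ together with the identity above divided by two (the sphere Dirichlet term doubles the disk one), yields
\[
\log\Big(\frac1\pi\int e^{2u}\Big)\le\frac{1+\epsilon}{4\pi}\int|\nabla u|^2+\frac1\pi\int_{\partial\mathbb{B}^2}u+C_\epsilon .
\]
Here $\epsilon$ also absorbs the small mismatch in how $\int_{\mathbb{S}^2} V$ enters with coefficient $\frac{1}{2\pi}$; this bookkeeping should be checked carefully. The main obstacle is the non-concentration step: showing that the barycenter condition prevents $e^{2u}$ from concentrating at a point of $\partial\mathbb{B}^2$ uniformly, which is what allows the cutoff splitting with separated supports.
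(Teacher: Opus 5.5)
Your argument is correct, but it is organized differently from the paper's.

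\textbf{The paper's route.} The paper stays on the disk. It splits $u=u_1+u_2$ spectrally: the low Neumann modes are controlled in $L^\infty$, and the high modes are small in $L^2$. It then uses a dichotomy for $e^{2u}\,dx/\int e^{2u}$ with two good alternatives.
\begin{enumerate}
\item A fixed fraction of the mass lies in $B_{1-d_0}$. This is handled by an interior cut-off and the Dirichlet inequality, with constant $\frac{1}{4\pi}$.
\item The mass sits in two separated sets. This is handled by two cut-offs and the Neumann-type inequality \cref{eq. weak Moser on ball}, with constant $\frac{1}{2\pi}$, after which the two estimates are added.
\end{enumerate}
Finally the paper trades $\frac{2}{\pi}\int_{\mathbb{B}^2}u$ for $\frac{1}{\pi}\int_{\partial\mathbb{B}^2}u$.

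\textbf{Your route.} You double to $\mathbb{S}^2$, and the reflection merges these two alternatives into one. Interior mass of $e^{2u}$ becomes two mirror-image lumps of $e^{2V}$. So the only bad scenario is concentration at a single point; by symmetry that point lies on the equator, and the barycenter condition excludes it. The Chen--Li two-region improvement on the closed sphere then gives $\frac{1+\epsilon}{8\pi}\int_{\mathbb{S}^2}|\nabla V|^2$.

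\textbf{Comparison.} Your route explains conceptually why interior concentration costs half as much as boundary concentration. It also reduces everything to a standard closed-surface lemma. The price is that you quote that lemma. The paper effectively proves its disk analogue by hand from the basic Moser--Trudinger inequality, which keeps its argument self-contained.

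\textbf{Two points to tighten.}

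First, in your ``robust'' moment formulation, $\delta$ must be fixed independently of $\epsilon$, with $C=C(\epsilon,\delta)$. It must also bound the norm of the moment vector, not each component separately. No compactness argument is needed for this bound. Since $\int x\,d\mu=0$ and
\[
\frac{2x}{1+|x|^2}-x=\frac{x(1-|x|^2)}{1+|x|^2},
\]
you get directly $|m|\le\max_{0\le r\le 1}r(1-r^2)=\frac{2}{3\sqrt3}$.

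Second, the bookkeeping you flagged does close, though not by halving the identity. Directly,
\[
\frac{1}{4\pi}\int_{\mathbb{B}^2}|\nabla v|^2+\frac{1}{\pi}\int_{\mathbb{B}^2}ve^{2\phi}=\frac{1}{4\pi}\int_{\mathbb{B}^2}|\nabla u|^2+\frac{1}{\pi}\int_{\partial\mathbb{B}^2}u+\frac{1}{2\pi}\int_{\mathbb{B}^2}\langle\nabla u,\nabla\phi\rangle-\frac{3}{4\pi}\int_{\mathbb{B}^2}|\nabla\phi|^2 .
\]
Since $\nabla\phi\in L^2$, the cross term is at most $\eta\int|\nabla u|^2+C_\eta$ by Cauchy--Schwarz and Young. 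The same holds for the extra $\frac{\epsilon}{4\pi}\int|\nabla v|^2$. This step plays the role of the paper's Step 4.
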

    To pave the way for this improved inequality, we need a general concentration lemma for probability measures.
    \begin{lemma}\label{lem. concentration for measures}
        There exist two universal constants $\delta_0, d_0>0$ such that, for any probability measure $\mu$ with 
        \[
        \int_{\mathbb{B}^2}z\dd \mu=0,
        \]
        the following alternative holds
        \begin{enumerate}
            \item Either there holds
            \[
            \mu(B_{1-d_0})\geq \delta_0;
            \]
            \item or there exists $\Omega_1,\Omega_2\subset \overline{\mathbb{B}^2}$, such that $\operatorname{dist}(\Omega_1,\Omega_2)\geq d_0$ and
            \[
            \mu(\Omega_1)\geq \delta_0,
            \quad 
            \mu(\Omega_2)\geq \delta_0.
            \]
        \end{enumerate}
    \end{lemma}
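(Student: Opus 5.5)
We argue by contradiction on the first alternative: we show that if $\mu(B_{1-d_0})<\delta_0$, then the vanishing barycenter forces two well-separated pieces of mass near the boundary circle. The natural tool is a decomposition of the annulus $A:=\overline{\mathbb{B}^2}\setminus B_{1-d_0}$ into finitely many angular sectors $S_1,\dots,S_N$, each of angular width $2\pi/N$. We will fix $N$ (say $N=8$) first and choose $d_0,\delta_0$ small in terms of $N$ at the end.

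\textbf{Step 1: reduction to the annulus.} Assume $\mu(B_{1-d_0})<\delta_0$, so $\mu(A)>1-\delta_0$. Since $\int z\,\dd\mu=0$ and $|z|\le 1$, we get
\[
\Big|\int_A z\,\dd\mu\Big| = \Big|\int_{B_{1-d_0}} z\,\dd\mu\Big|\le \delta_0 .
\]
On $A$ we have $z=|z|e^{i\theta}$ with $|z|\in[1-d_0,1]$. Hence the angular measure $\nu:=(\arg)_\#(\mu|_A)$ on $\mathbb{S}^1$ has total mass at least $1-\delta_0$ and satisfies
\[
\Big|\int e^{i\theta}\dd\nu\Big|\le \delta_0+d_0 .
\]

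\textbf{Step 2: a nearly balanced measure on the circle is spread out.} Suppose no two sectors that are not adjacent (including the wrap-around) each carry mass at least $\delta_0$. Then all sectors of mass $\ge\delta_0$ lie in at most two adjacent sectors, i.e. within an arc $I$ of length $4\pi/N\le\pi/2$. The mass of $\nu$ outside $I$ is at most $N\delta_0$. Let $e$ be the unit vector at the midpoint of $I$. Then
\[
\mathrm{Re}\Big(\bar e\int e^{i\theta}\dd\nu\Big)\ \ge\ (1-\delta_0-N\delta_0)\cos(\pi/4)-N\delta_0 .
\]
This lower bound exceeds $\delta_0+d_0$ once $\delta_0,d_0$ are small enough (for instance $N\delta_0<1/10$ and $d_0<1/10$), contradicting Step 1. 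Therefore there exist two non-adjacent sectors $S_i,S_j$ with $\mu(S_i),\mu(S_j)\ge\delta_0$.

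\textbf{Step 3: separation.} Take $\Omega_1=S_i$ and $\Omega_2=S_j$, viewed as subsets of the annulus $A$. Two non-adjacent sectors are separated by at least one full sector of angle $2\pi/N$. Their points have modulus at least $1-d_0\ge 1/2$, so their Euclidean distance is at least $\tfrac12\sin(2\pi/N)$ or so. Shrinking $d_0$ so that $d_0\le \tfrac12\sin(2\pi/N)$ gives $\operatorname{dist}(\Omega_1,\Omega_2)\ge d_0$, which establishes alternative (2).

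\textbf{Order of constants and main obstacle.} The constants must be chosen in the order $N$, then $d_0$, then $\delta_0$, and every constraint is explicit. The only delicate point is Step 2, the quantitative statement that a measure on $\mathbb{S}^1$ concentrated in an arc of length at most $\pi/2$ cannot have a small first moment. It is handled by the cosine estimate above, with the masses of the small sectors absorbed into the error terms.
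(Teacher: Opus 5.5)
Your proof is correct, but it takes a different route from the paper.

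\textbf{The paper's route.} The paper argues by contradiction and compactness. Assuming the lemma fails for $\delta_j=d_j=1/j$, it takes a weak-$*$ limit $\mu_\infty$ of the bad measures. The condition $\mu_j(B_{1-d_j})<\delta_j$ shows that $\mu_\infty$ is supported on $\partial\mathbb{B}^2$. The failure of the separation alternative forces $\mu_\infty=\delta_a$ with $|a|=1$. The barycenter condition then gives $a=0$, a contradiction.

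\textbf{Your route.} You give a direct quantitative argument. You discard the interior mass and pass to the angular pushforward $\nu$, which satisfies $|\int e^{i\theta}\dd\nu|\le\delta_0+d_0$. The cosine estimate then rules out $\nu$ being essentially confined to two adjacent sectors, so two non-adjacent sectors, which are automatically separated, must each carry mass $\ge\delta_0$.

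The steps check out:
\begin{itemize}
\item For $N\ge 8$, a family of pairwise adjacent sectors has at most two members.
\item The cases of zero or one heavy sector are covered by the same estimate.
\item The separation bound $|z_1-z_2|\ge(1-d_0)\sin(2\pi/N)$ follows from $|z_1-z_2|^2\ge r_1^2+r_2^2-2r_1r_2\cos(2\pi/N)$.
\end{itemize}
Two cosmetic fixes: take the sectors half-open so that they partition $A$, and write $\cos(2\pi/N)$ instead of $\cos(\pi/4)$ unless $N=8$ is fixed.

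\textbf{Comparison.} The paper's proof is shorter and softer, but it yields no values for $\delta_0,d_0$. Yours gives explicit constants; for instance $N=8$, $d_0=1/20$, $\delta_0=1/100$ works. More usefully, it shows that $\Omega_1,\Omega_2$ can always be taken from a fixed finite family of annular sectors. Then the cut-offs $\chi_1,\chi_2$ in Step 3 of the proof of \cref{lem. improved weak Moser on ball} come from a finite list, so the constants there are manifestly universal. Your argument also carries over to higher dimensions with spherical caps in place of sectors.
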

    \begin{proof}
        Suppose for contradiction that the lemma is false. Then for $\delta_j:=\frac{1}{j}, d_j:=\frac{1}{j}$, there exists probability measures $\mu_j$ on $\mathbb{B}^2$ with $\int_{\mathbb{B}^2}z\dd \mu_j=0$,  satisfying
        \begin{equation}\label{eq. center mass is small}
            \mu_j(B_{1-d_j})
        <\delta_j;
        \end{equation}
        and, for any $\Omega_1,\Omega_2\subset\overline{\mathbb{B}^2}$ with $\operatorname{dist}(\Omega_1,\Omega_2)\geq d_j$, there holds
        \begin{equation}\label{eq. separated mass is small}
            \min\left\{
            \mu_j(\Omega_1),\mu_j(\Omega_2)
        \right\}<\delta_j.
        \end{equation}
        Since $\mu_j$ are probability measures, up to a subsequence, there exists a probability measure $\mu_\infty$ such that $\mu_j\overset{*}{\rightharpoonup}\mu_\infty$ in the sense that
        \[
        \int_{\mathbb{B}^2}f\dd \mu_j
        \to 
        \int_{\mathbb{B}^2}f\dd \mu_\infty,\quad 
        \forall f\in C(\mathbb{B}^2).
        \]
        For any $r\in(0,1)$, choose $j\gg 1$ such that $B_{1-d_j}\supset B_r$, then by \cref{eq. center mass is small} and the weak convergence of $\mu_j$, we have
        \[
        \mu_\infty(B_r)
        \leq\liminf_{j\to+\infty}
        \mu_j(B_r)=0.
        \]
        Therefore, $\mu_\infty$ is supported on $\partial\mathbb{B}^2$. 

        We claim that $\operatorname{supp}\mu_\infty$ is a singleton. Otherwise, say $p,q\in\operatorname{supp}\mu_\infty$ with $p\not=q$, then there exists two disjoint sets $\Omega_p,\Omega_q\subset\overline{\mathbb{B}^2}$ with $\mu_\infty(\Omega_p)>0,\mu_\infty(\Omega_p)>0$. Then we have
        \[
        \liminf_{j\to+\infty}
        \min\{\mu_j(\Omega_p),\mu_j(\Omega_q)\}
        \geq 
        \min\{\mu_\infty(\Omega_p),\mu_\infty(\Omega_q)\}
        >0.
        \]
        This contradicts with \cref{eq. separated mass is small}. Hence $\mu_\infty=\delta_a$ for some $a\in\partial\mathbb{B}^2$.
        
        It follows that
        \[
        0=\lim_{j\to+\infty}\int_{\mathbb{B}^2}
        z\dd \mu_j
        =\int_{\mathbb{B}^2}z\dd \mu_\infty
        =a\in\partial\mathbb{B}^2.
        \]
        This completes the proof.
    \end{proof}

    \begin{proof}[Proof of \cref{lem. improved weak Moser on ball}]
    {\bf Step 1:} We first assume that
    \[
    \int_{\mathbb{B}^2}u=0.
    \]
    For any $\epsilon>0$, let $V_\epsilon$ be the Neumann eigenspace corresponding to eigenvalues less than $\epsilon^{-1}$. Let $u_1\in L^{\infty}(\mathbb{B}^2)$ be the orthogonal projection of $u$ on $V_\epsilon$, and $u_2:=u-u_1\in H^1(\mathbb{B}^2)$. By definition and the Poincar\'e inequality, there holds
    \begin{align}
        ||u_1||_{L^{\infty}(\mathbb{B}^2)}
        \leq& C_\epsilon 
        ||u_1||_{L^2(\mathbb{B}^2)}
        \leq C_\epsilon ||\nabla u_1||_{L^2(\mathbb{B}^2)}
        \leq \epsilon\int_{\mathbb{B}^2}
        |\nabla u_1|^2
        +C_\epsilon,\label{eq. upper bound of u1}\\
        \int_{\mathbb{B}^2}|u_2|^2
        \leq& \epsilon 
        \int_{\mathbb{B}^2}|\nabla u_2|^2.
        \label{eq. L2 bound of u2}
    \end{align}

        By \cref{lem. concentration for measures}, the probability measure
        \[
        \dd\mu:=\frac{e^{2u(x)}}{\int_{\mathbb{B}^2}e^{2u}}\dd x
        \]
        satisfies the alternatives.

        {\bf Step 2:} For the first alternative, i.e.
        \[
        \int_{B_{1-d_0}}e^{2u}
        \geq \delta_0\int_{\mathbb{B}^2}e^{2u}.
        \]
        Let $\chi\in C_0^{\infty}(\mathbb{B}^2)$ be a cut-off function with $\chi\equiv 1$ in $B_{1-d_0}$. Then we have
        \begin{equation}\label{eq. area upper bound in first alternative}
            \delta_0 \int_{\mathbb{B}^2}e^{2u}
        \leq \int_{B_{1-d_0}}e^{2u}
        \leq e^{2||u_1||
        _{L^\infty(\mathbb{B}^2)}}
        \int_{\mathbb{B}^2}e^{2\chi u_2}.
        \end{equation}
        By Cauchy-Schwarz inequality and \cref{eq. L2 bound of u2}, there holds, for any $\eta>0$,
        \begin{equation}
        \label{eq. H01 norm upper bound}
            \int_{\mathbb{B}^2}|\nabla(\chi u_2)|^2
        \leq (1+\eta)
        \int_{\mathbb{B}^2}|\nabla u_2|^2
        +C_\eta\int_{\mathbb{B}^2}|u_2|^2
        \leq (1+\eta+C_\eta\epsilon)
        \int_{\mathbb{B}^2}|\nabla u_2|^2
        \end{equation}
        Applying \cref{eq. weak Moser for Dirichlet boundary value} to $\chi u_2$ and using \cref{eq. H01 norm upper bound}, we obtain
        \[
        \log\left(
        \frac{1}{\pi}
        \int_{\mathbb{B}^2}e^{2\chi u_2}
        \right)
        \leq \frac{1+\eta+C_\eta\epsilon}{4\pi}
        \int_{\mathbb{B}^2}|\nabla u_2|^2
        +C.
        \]
        Inserting this into \cref{eq. area upper bound in first alternative}, taking logarithm and using \cref{eq. upper bound of u1}, we get 
        \begin{align*}
            \log\left(
        \frac{1}{\pi}
        \int_{\mathbb{B}^2}e^{2u}
        \right)
        \leq& 2\epsilon\int_{\mathbb{B}^2}
        |\nabla u_1|^2
        +C_\epsilon
        +\frac{1+\eta+C_\eta\epsilon}{4\pi}
        \int_{\mathbb{B}^2}|\nabla u_2|^2
        +C\\
        \leq &\frac{1+\eta+C_\eta\epsilon}{4\pi}
        \int_{\mathbb{B}^2}|\nabla u|^2
        +C_\epsilon.
        \end{align*}
        First fixing $\eta\ll 1$ and then choosing $\epsilon\ll 1$, after renaming the parameters, we get
        \[
        \log\left(
        \frac{1}{\pi}
        \int_{\mathbb{B}^2}e^{2u}
        \right)
        \leq \frac{1+\epsilon}{4\pi}
        \int_{\mathbb{B}^2}|\nabla u|^2
        +C_\epsilon.
        \]    

        {\bf Step 3:} For the second alternative, i.e. there exists $\Omega_1,\Omega_2\subset \overline{\mathbb{B}^2}$, such that $\operatorname{dist}(\Omega_1,\Omega_2)\geq d_0$ and
        \[
        \int_{\Omega_1}e^{2u}
        \geq \delta_0
        \int_{\mathbb{B}^2}e^{2u},\quad
        \int_{\Omega_2}e^{2u}
        \geq \delta_0
        \int_{\mathbb{B}^2}e^{2u}.
        \]
        Let $\chi_1,\chi_2\in C^\infty(\mathbb{B}^2)$ be two functions with disjoint support and $\chi_i\equiv 1$ on $\Omega_i$. Then we have
        \begin{equation}\label{eq. area upper bound in second alternative}
            \delta_0 \int_{\mathbb{B}^2}e^{2u}
        \leq \int_{\Omega_i}e^{2u}
        \leq e^{2||u_1||
        _{L^\infty(\mathbb{B}^2)}}
        \int_{\mathbb{B}^2}
        e^{2\chi_i u_2}.
        \end{equation}
        Note that, by \cref{eq. L2 bound of u2},
        \begin{align}
            \int_{\operatorname{supp}\chi_i}
            |\nabla(\chi_i u_2)|^2
        \leq& (1+\eta)
        \int_{\operatorname{supp}\chi_i}
        |\nabla u_2|^2
        +C_\eta\int_{\mathbb{B}^2}|u_2|^2\notag
        \\
        \leq& (1+\eta)
        \int_{\operatorname{supp}\chi_i}|\nabla u_2|^2
        +C_\eta\epsilon
        \int_{\mathbb{B}^2}|\nabla u_2|^2.
        \label{eq. H01 norm upper bound in second}
        \end{align}
        By Sobolev-trace inequality, \cref{eq. L2 bound of u2}, \cref{eq. H01 norm upper bound in second} and Young's inequality, there holds
        \begin{align}
        \label{eq. upper bound of trace}
            \int_{\partial\mathbb{B}^2}\chi_i u_2
            \leq \sqrt{2\pi}
            ||\chi_2u_2||
            _{L^2(\partial \mathbb{B}^2)}
            \leq \sqrt{2\pi}
            ||\chi_2u_2||
            _{H^{1}(\mathbb{B}^2)}
            \leq \epsilon \int_{\mathbb{B}^2}
            |\nabla u_2|^2+C_\epsilon.
        \end{align}

        Applying \cref{eq. weak Moser on ball} to $\chi_i u_2$ and inserting \cref{eq. H01 norm upper bound in second} and \cref{eq. upper bound of trace} into it, we obtain
        \[
        \log\left(
        \frac{1}{\pi}
        \int_{\mathbb{B}^2}e^{2\chi_i u_2}
        \right)
        \leq 
        \frac{1+\eta}{2\pi}
        \int_{\operatorname{supp}\chi_i}|\nabla u_2|^2
        +\left(\frac{C_\eta\epsilon}{2\pi}
        +\frac{\epsilon}{\pi}
        \right)
        \int_{\mathbb{B}^2}|\nabla u_2|^2
            +C_\eta+C_\epsilon.
        \]
        Inserting this into \cref{eq. area upper bound in second alternative}, taking logarithm and using \cref{eq. upper bound of u1}, we get 
        \[
        \log\left(
        \frac{1}{\pi}
        \int_{\mathbb{B}^2}e^{2u}
        \right)
        \leq
        2\epsilon\int_{\mathbb{B}^2}
        |\nabla u_1|^2
        +\frac{1+\eta}{2\pi}
        \int_{\operatorname{supp}\chi_i}|\nabla u_2|^2
        +\left(\frac{C_\eta\epsilon}{2\pi}
        +\frac{\epsilon}{\pi}
        \right)
        \int_{\mathbb{B}^2}|\nabla u_2|^2
        +C_\eta+C_\epsilon.
        \]
        By adding the inequalities corresponding to $i=1$ and $i=2$, we derive
        \begin{align*}
            2\log\left(
        \frac{1}{\pi}
        \int_{\mathbb{B}^2}e^{2u}
        \right)
        &\leq
        4\epsilon\int_{\mathbb{B}^2}
        |\nabla u_1|^2
        +\frac{1+\eta}{2\pi}
        \int_{\mathbb{B}^2}|\nabla u_2|^2
        +\left(\frac{C_\eta\epsilon}{\pi}
        +\frac{2\epsilon}{\pi}
        \right)
        \int_{\mathbb{B}^2}|\nabla u_2|^2
        +C_\eta+C_\epsilon\\[3mm]
        &\leq 
        \left(
        \frac{1+\eta}{2\pi}
        +\frac{(C_\eta+2)\epsilon}{\pi}
        \right)
        \int_{\mathbb{B}^2}|\nabla u|^2
        +C_\eta+C_\epsilon.
        \end{align*}
        First fixing $\eta\ll 1$ and then choosing $\epsilon\ll 1$, after renaming the parameters, we get
        \[
        \log\left(
        \frac{1}{\pi}
        \int_{\mathbb{B}^2}e^{2u}
        \right)
        \leq \frac{1+\epsilon}{4\pi}
        \int_{\mathbb{B}^2}|\nabla u|^2
        +C_\epsilon.
        \] 

        {\bf Step 4:} We have proved that, for any $\epsilon>0$, there exists $C_\epsilon>0$, such that if $\int_{\mathbb{B}^2}ze^{2u(z)}=0$ and
        $\int_{\mathbb{B}^2}u=0$, then  
        \[
        \log\left(
        \frac{1}{\pi}
        \int_{\mathbb{B}^2}e^{2u}
        \right)
        \leq \frac{1+\epsilon}{4\pi}
        \int_{\mathbb{B}^2}|\nabla u|^2
        +C_\epsilon.
        \]
        By translation, this is equivalent to
        \begin{equation}
        \label{eq. improved weak Moser on ball with interior mean}
            \log\left(
        \frac{1}{\pi}
        \int_{\mathbb{B}^2}e^{2u}
        \right)
        \leq \frac{1+\epsilon}{4\pi}
        \int_{\mathbb{B}^2}|\nabla u|^2
        +\frac{2}{\pi}\int_{\mathbb{B}^2}u
        +C_\epsilon,\quad 
        \forall u\in H^1(\mathbb{B}^2)
        \text{ with } 
        \int_{\mathbb{B}^2}ze^{2u(z)}=0.
        \end{equation}
        
        Now we claim that
        \begin{equation}
        \label{eq. difference between interior mean and boundary mean}
            \left|
        \frac{2}{\pi}\int_{\mathbb{B}^2}u
        -\frac{1}{\pi}
        \int_{\partial\mathbb{B}^2}u
        \right|
        \leq C
        \left(\int_{\mathbb{B}^2}|\nabla u|^2
        \right)^{\frac{1}{2}},\quad 
        \forall u\in H^1(\mathbb{B}^2).
        \end{equation}
        Otherwise, there exists a sequence $u_j\in H^1(\mathbb{B}^2)$ such that
        \begin{equation}
        \label{eq. blow up condition}
            \int_{\mathbb{B}^2}|\nabla u_j|^2=1,\quad 
        \left|
        \frac{2}{\pi}\int_{\mathbb{B}^2}u_j
        -\frac{1}{\pi}
        \int_{\partial\mathbb{B}^2}u_j
        \right|
        \geq j.
        \end{equation}
        After a translation, we assume that $\int_{\mathbb{B}^2}u_j=0$. Then by \cref{eq. blow up condition}, Cauchy-Schwarz inequality, Sobolev-trace inequality and Poincar\'e inequality, we have
        \[
        j
        \leq \frac{1}{\pi}\int_{\partial \mathbb{B}^2}
        |u_j|
        \leq 2\left(
        \frac{1}{2\pi}
        \int_{\partial\mathbb{B}^2}|u_j|^2
        \right)^{\frac{1}{2}}
        \leq C||u_j||_{H^1(\mathbb{B}^2)}
        \leq C||\nabla u_j||_{L^2(\mathbb{B}^2)}
        =C.
        \]
        This is a contradiction. Combining \cref{eq. improved weak Moser on ball with interior mean} with \cref{eq. difference between interior mean and boundary mean} completes the proof.
    \end{proof}

    \begin{proof}[Proof of \cref{thm. sharp Sobolev-trace inequality} when $1\leq \alpha<2$]
        By \cref{lem. conformal invariance of the functional}, \cref{lem. Disk Hersch normalization} and the translation-invariance of $F_{1,\alpha}$, it suffices to minimize the functional over
        \[
        \mathcal{S}:=
        \left\{u\in H^1(\mathbb{B}^2):
        \int_{\partial\mathbb{B}^2}u=0,\ 
        \int_{\mathbb{B}^2}ze^{2u(z)}=0
        \right\}.
        \]
        By Jensen's inequality, there holds
        \[
        \log\left(\frac{1}{2\pi}
        \int_{\partial\mathbb{B}^2}e^u
        \right)
        \geq \int_{\partial\mathbb{B}^2}
        \frac{u}{2\pi}
        =0.
        \]
        Combining this with \cref{lem. improved weak Moser on ball}, we derive
        \begin{align*}
            F_{1,\alpha}(u)
        &= \frac{1}{4\pi}
        \int_{\mathbb{B}^2}|\nabla u|^2
        +\frac{1}{2\pi}
        \int_{\partial \mathbb{B}^2}u
        -\frac{\alpha}{2}
        \log\left(
        \frac{1}{\pi}\int_{\mathbb{B}^2}e^{2u}
        \right)
        +(\alpha-1)
        \log\left(
        \frac{1}{2\pi}
        \int_{\partial\mathbb{B}^2}e^{u}
        \right)\\[3mm]
        & \geq \frac{2-(1+\epsilon)\alpha}{8\pi}
        \int_{\mathbb{B}^2}|\nabla u|^2
        -C_\epsilon.
        \end{align*}
        Therefore, by choosing $\epsilon$ satisfying $2-(1+\epsilon)\alpha>0$, we know that $F_{1,\alpha}$ is coercive on $\mathcal{S}$. By \cref{lem. compactness in H1}, the same arguments as that in \cref{App results} shows that there exists a minimizer $u_\infty$ in $\mathcal{S}$. 
        
        We claim that this constrained minimizer is  an unconstrained critical point of $F_{1,\alpha}$. Indeed, for any variation $u_\infty+tv$, by \cref{lem. conformal invariance of the functional}, \cref{lem. Disk Hersch normalization} and the translation-invariance of $F_{1,\alpha}$,  there exists exists a smooth family of M\"obius transformations $\phi_t$ such that 
        \[
        T_{\phi_t}(u_\infty+tv)\in\mathcal{S},
        \]
        and
        \[
        F_{1,\alpha}(T_{\phi_t}(u_\infty+tv))
        =F_{1,\alpha}(u_\infty+tv).
        \]
        Differentiating it at $t=0$ yields that $u_\infty$ satisfies the unconstrained Euler-Lagrange equation \cref{eq. equation of minimizer} with $\lambda=1$.
        The rest of proof is the same as that in \cref{App results}.
    \end{proof}
    Finally, we show the sharpness for the range of $\alpha$.
\begin{proposition}
\label{prop. sharpness of alpha}
\
    \begin{enumerate}
        \item For $\alpha=2$, there holds
        \[
        \inf_{u\in H^1(\mathbb{B}^2)}F_{1,2}(u)
        = -1.
        \]
        The infimum is not achieved.
        \item Extend \cref{eq. def of functional F} formally to all
        $\lambda,\alpha\in\mathbb{R}$, then
        \[
        \inf_{u\in H^1(\mathbb{B}^2)}
        F_{\lambda,\alpha}(u)>-\infty\quad
        \Longleftrightarrow\quad 
        \lambda\leq 1 \text{ and } 
        \alpha\leq 2.
        \]
        In these cases, the infimum equals
        \[
        -\frac{\alpha}{2}
        -\left(1-\frac{\alpha}{2}\right)
        \log\left(1-\frac{\alpha}{2}\right).
        \]
    \end{enumerate}
    \end{proposition}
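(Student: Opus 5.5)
The plan rests on two elementary decompositions of $F_{\lambda,\alpha}$. First, for fixed $u$ the map $\alpha\mapsto F_{\lambda,\alpha}(u)$ is affine. Second, the $\lambda$-dependence separates off:
\[
F_{\lambda,\alpha}(u)=F_{1,\alpha}(u)+(\lambda-1)\,J(u),\qquad
J(u):=\frac{1}{2\pi}\int_{\partial\mathbb{B}^2}u-\log\left(\frac{1}{2\pi}\int_{\partial\mathbb{B}^2}e^{u}\right).
\]
By Jensen's inequality $J(u)\le 0$, and $J(u)=0$ whenever $u$ is constant on $\partial\mathbb{B}^2$. Throughout, write $m(\alpha):=-\frac{\alpha}{2}-(1-\frac{\alpha}{2})\log(1-\frac{\alpha}{2})$, with $m(2)=-1$.

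\textbf{Lower bounds when $\lambda\le 1$, $\alpha\le 2$.}
\begin{itemize}
\item For $\alpha<2$ and $\lambda\le1$ we have $(\lambda-1)J(u)\ge 0$. Hence $F_{\lambda,\alpha}\ge F_{1,\alpha}\ge m(\alpha)$ by \cref{thm. sharp Sobolev-trace inequality}.
\item The test functions $w_q$ from \cref{lem. F of wq} vanish on $\partial\mathbb{B}^2$, so $J(w_q)=0$. Taking $q=\frac{\alpha}{2-\alpha}$ gives equality, so the infimum is $m(\alpha)$.
\item For $\alpha=2$, fix $u$ and let $\alpha\to2^-$ in $F_{1,\alpha}(u)\ge m(\alpha)$. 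Affinity (hence continuity) in $\alpha$ gives $F_{1,2}(u)\ge -1$, and therefore $F_{\lambda,2}(u)\ge -1$ for all $\lambda\le 1$.
\item \cref{lem. F of wq} gives $F_{\lambda,2}(w_q)=-\frac{q}{1+q}\to-1$ as $q\to\infty$, so the infimum is exactly $-1$.
\end{itemize}

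\textbf{Non-attainment at $\alpha=2$, $\lambda=1$ (the main subtle step).} Suppose $F_{1,2}(u)=-1$ for some $u$. Set $g(\alpha):=F_{1,\alpha}(u)$, which is affine with $g(2)=m(2)$. For $\alpha<2$,
\[
m(\alpha)-m(2)=\frac{2-\alpha}{2}\left(1-\log\frac{2-\alpha}{2}\right),
\]
so $\frac{m(\alpha)-m(2)}{2-\alpha}\to+\infty$ as $\alpha\to2^-$. Since $g$ is affine, $\frac{g(\alpha)-g(2)}{2-\alpha}$ is constant. Therefore $g(\alpha)<m(\alpha)$ for $\alpha$ close to $2$, contradicting \cref{thm. sharp Sobolev-trace inequality}.

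\textbf{Unboundedness.}
\begin{itemize}
\item \emph{Case $\alpha>2$ (any $\lambda$).} Use $w_q$: $F_{\lambda,\alpha}(w_q)=(1-\frac{\alpha}{2})\log(1+q)-\frac{q}{1+q}\to-\infty$ as $q\to\infty$.
\item \emph{Case $\lambda>1$ (any $\alpha$).} Take $u_a:=T_{\phi_a}0=\log|\phi_a'|$ with $\phi_a$ as in \cref{eq. Mobius transformation}.
  \begin{itemize}
  \item By \cref{lem. conformal invariance of the functional}, $F_{1,\alpha}(u_a)=F_{1,\alpha}(0)=0$, and $\int_{\partial\mathbb{B}^2}e^{u_a}=2\pi$.
  \item Since $\log|1+\bar a z|$ is harmonic and vanishes at $0$, its boundary mean is zero. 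Hence $\frac1{2\pi}\int_{\partial\mathbb{B}^2}u_a=\log(1-|a|^2)$.
  \item So $J(u_a)=\log(1-|a|^2)\to-\infty$ as $|a|\to1$, and $F_{\lambda,\alpha}(u_a)=(\lambda-1)\log(1-|a|^2)\to-\infty$.
  \end{itemize}
\end{itemize}
Combining the lower bounds and the two unbounded cases gives the equivalence in (2), with the stated value of the infimum.

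The step I expect to require the most care is non-attainment. A limiting or concentration argument would be awkward there, so I would rely on the affine-in-$\alpha$ comparison with the infinite left slope of $m$ at $\alpha=2$, as above.
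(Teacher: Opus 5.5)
Your proof is correct and follows the paper's argument. It uses the same Jensen-based splitting $F_{\lambda,\alpha}=F_{1,\alpha}+(\lambda-1)J$, the limit $\alpha\to2^-$ together with the family $w_q$ for the value of the infimum and for $\alpha>2$, and M\"obius-transported test functions with boundary mean $\log(1-|a|^2)$ for $\lambda>1$ (you take $q=0$, the paper a fixed $q$).

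The one variation is non-attainment. The paper cites the deficit bound $F_{1,2}(u)\ge -1+I(u)$ from \cref{cor. stability inequality}. You instead argue directly that the affine function $\alpha\mapsto F_{1,\alpha}(u)$ cannot stay above $m(\alpha)$ and touch it at $\alpha=2$, because $m$ has left derivative $-\infty$ there. This is the same mechanism (optimizing the family of sharp inequalities in $\alpha$), in qualitative, self-contained form.
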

    \begin{proof}
        By \cref{thm. sharp Sobolev-trace inequality}, for $\alpha<2$, we have
        \[
        F_{1,\alpha}(u)
        \geq -\frac{\alpha}{2}
        -\left(1-\frac{\alpha}{2}\right)
        \log\left(1-\frac{\alpha}{2}\right).
        \]
        Letting $\alpha$ go to $2$ shows that $F_{1,2}(u)\geq -1$. The sharpness follows from \cref{lem. F of wq}:
        \[
        F_{1,2}(w_q)=-1+\frac{1}{1+q}\to -1,\quad \text{as }q\to+\infty.
        \]
        On the other hand, by \cref{cor. stability inequality}, there holds
        \[
        F_{1,2}(u)\geq -1+
        \frac{\left(
       \int_{\partial\mathbb{B}^2}e^{u}
       \right)^2}
       {4\pi\int_{\mathbb{B}^2}e^{2u}}.
        \]
        Hence the infimum is not achieved.

        \vspace{1em}

        For $\lambda\leq 1$ and $\alpha\leq 2$, note that
        \begin{equation}
        \label{eq. representation of general functional}
            F_{\lambda,\alpha}(u)
        =F_{1,\alpha}(u)
        +(1-\lambda)
        \left(
        \log\left(\frac{1}{2\pi}\int_{\partial\mathbb{B}^2}e^u
        \right)
        -\frac{1}{2\pi}
        \int_{\partial\mathbb{B}^2}u
        \right).
        \end{equation}
        Therefore, by \cref{thm. sharp Sobolev-trace inequality} and Jensen's inequality, we get
        \[
        F_{\lambda,\alpha}
        \geq
        -\frac{\alpha}{2}
        -\left(1-\frac{\alpha}{2}\right)
        \log\left(1-\frac{\alpha}{2}\right).
        \]
        This lower bound is achieved by $w_{\frac{\alpha}{2-\alpha}}$ if $\alpha<2$.

        For $\alpha>2$, \cref{lem. F of wq} shows that
        \[
        F_{\lambda,\alpha}(w_q)
        =-\frac{\alpha-2}{2}\log (1+q)
        -\frac{q}{1+q}\to -\infty,
        \quad \text{as }q\to+\infty.
        \]

        For $\lambda>1$, consider a fixed $q>-1$ and
        \[
        T_{\phi_a}w_q
        :=w_q\circ\phi_a +\log |\phi_a'|,
        \]
        where $\phi_a$ is the Mobius transformation defined in \cref{eq. Mobius transformation}. Note that
        \[
        T_{\phi_a}w_q(x)=\log\frac{1-|a|^2}{1+|a|^2+2\langle x,a\rangle} \quad 
        \text{on }\partial\mathbb{B}^2,
        \]
        A residue calculation yields
        \[
        \log\left(\frac{1}{2\pi}\int_{\partial\mathbb{B}^2}e^u
        \right)
        =0,\quad 
        \frac{1}{2\pi}
        \int_{\partial\mathbb{B}^2}u
        =\log (1-|a|^2).
        \]
        Therefore, by \cref{eq. representation of general functional}, we have
        \[
        F_{\lambda,\alpha}(T_{\phi_a}w_q)
        =F_{1,\alpha}(T_{\phi_a}w_q)
        +(\lambda-1)\log(1-|a|^2)
        \to -\infty,\quad
        \text{as }a\to \partial\mathbb{B}^2.
        \]
    \end{proof}
    \begin{remark}
        The two thresholds $\lambda\leq 1$ and $\alpha\leq 2$ correspond to different loss of compactness phenomena.
        For $\alpha>2$, the measure
        \[
        \frac{e^{2w_q(x)}}{
        \int_{\mathbb{B}^2}e^{2w_q}
        }
        \dd x
        \]
        converges weakly to the Dirac measure $\delta_0$ as $q\to+\infty$, while the boundary measure remains the uniform measure
        \[
        \frac{e^{w_q(x)}}{
        \int_{\partial \mathbb{B}^2}e^{w_q}
        }
        \dd \theta
        =\frac{1}{2\pi}\dd \theta.
        \]
        For $\lambda>1$ and a fixed $q>-1$, the measure
        \[
        \frac{e^{2T_{\phi_a}w_q(x)}}{
        \int_{\mathbb{B}^2}e^{2T_{\phi_a}w_q}
        }
        \dd x
        \]
        converges weakly to $\delta_{-\xi}$ as 
        $a\to\xi\in\partial\mathbb{B}^2$, and the boundary measure
        \[
        \frac{e^{T_{\phi_a}w_q(x)}}{
        \int_{\partial\mathbb{B}^2}
        e^{T_{\phi_a}w_q}
        }
        \dd \theta
        =\frac{|\phi_a'|}{2\pi}
        \]
        also converges weakly to $\delta_{-\xi}$ as 
        $a\to\xi\in\partial\mathbb{B}^2$.
    \end{remark}

    \section{Deficit estimate for Lebedev-Milin inequality}
    \label{sec. stability ineq for LM ineq}
    As a corollary of \cref{thm. sharp Sobolev-trace inequality}, we obtain a deficit estimate for the classical Lebedev-Milin inequality.
    Here we present a more precise form of \cref{cor. stability for LM inequality}.
    \begin{corollary}
    \label{cor. stability inequality}
       For $u\in H^1(\mathbb{B}^2)$, define the isoperimetric ratio
       \[
       I(u):=
       \frac{\left(
       \int_{\partial\mathbb{B}^2}e^{u}
       \right)^2}
       {4\pi\int_{\mathbb{B}^2}e^{2u}}
       =\frac{\left(\frac{1}{2\pi}
        \int_{\partial\mathbb{B}^2}e^{u}
        \right)^2}
        {\frac{1}{\pi}
        \int_{\mathbb{B}^2}e^{2u}}.
       \]
       Then there holds
       \begin{equation}\label{eq. stability of Lebedev-Milin inequality}
           \frac{1}{4\pi}
        \int_{\mathbb{B}^2}|\nabla u|^2
        +\frac{1}{2\pi}
        \int_{\partial \mathbb{B}^2}u
        -\log\left(
        \frac{1}{2\pi}
        \int_{\partial\mathbb{B}^2}e^{u}
        \right)
        \geq \varphi(I(u)), \qquad \forall u\in H^1(\mathbb{B}^2),
       \end{equation}
       where $\varphi(t):=t-1-\log t\geq0$ for all $t>0$.
       Moreover, equality holds if and only if 
       \begin{equation}
       \label{eq. minimizer in sharp LM inequality}
           u=T_\phi w_q+c,
       \end{equation}
       where $\phi:\mathbb{B}^2\to \mathbb{B}^2$ is a M\"obius transformation and $c\in\mathbb{R}$ is a constant.

       More generally, for $\alpha\in(-\infty,2]$, we have a deficit estimate for the sharp inequality of $F_{1,\alpha}$:
       \[
       F_{1,\alpha}(u) 
       \geq 
       \begin{cases}
           -\frac{\alpha}{2}
        -\left(1-\frac{\alpha}{2}\right)
        \log\left(1-\frac{\alpha}{2}\right)
        +\left(1-\frac{\alpha}{2}\right)
        \varphi\left(\frac{I(u)}{1-\frac{\alpha}{2}}\right), &\text{if }\alpha<2,\\
        -1+I(u), &\text{if }\alpha=2.
       \end{cases}
       \]
    \end{corollary}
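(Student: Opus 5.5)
The plan is to obtain the corollary as a Legendre-type optimization over the one-parameter family of sharp inequalities in \cref{thm. sharp Sobolev-trace inequality}. For fixed $u\in H^1(\mathbb{B}^2)$, write
\[
D:=\frac{1}{4\pi}\int_{\mathbb{B}^2}|\nabla u|^2+\frac{1}{2\pi}\int_{\partial\mathbb{B}^2}u,\qquad
A:=\log\Big(\frac{1}{\pi}\int_{\mathbb{B}^2}e^{2u}\Big),\qquad
B:=\log\Big(\frac{1}{2\pi}\int_{\partial\mathbb{B}^2}e^{u}\Big).
\]
Then $\log I(u)=2B-A$, and directly from \cref{eq. def of functional F} we get
\[
F_{1,\alpha}(u)=D-\frac{\alpha}{2}A-(1-\alpha)B=(D-B)+\frac{\alpha}{2}\log I(u).
\]
So the left-hand side of \cref{eq. stability of Lebedev-Milin inequality} is $D-B=F_{1,0}(u)$. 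For every $\alpha<2$, \cref{thm. sharp Sobolev-trace inequality} gives $D-B\geq m(\alpha)-\frac{\alpha}{2}\log I(u)$. Here $m(\alpha)$ denotes the sharp constant.

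Next I substitute $\beta:=1-\frac{\alpha}{2}\in(0,\infty)$, so that $m=\beta-1-\beta\log\beta$. The lower bound becomes
\[
D-B\;\geq\;\sup_{\beta>0}\Big[\beta-1-\beta\log\beta-(1-\beta)\log I\Big].
\]
The bracket is concave in $\beta$, and its derivative $\log I-\log\beta$ vanishes exactly at $\beta=I$. Since $I(u)>0$, this critical point always lies in the admissible range; it corresponds to $\alpha=2(1-I)<2$. Evaluating at $\beta=I$ gives $I-1-\log I=\varphi(I(u))$, which proves \cref{eq. stability of Lebedev-Milin inequality}.

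For the general deficit estimate, I insert this bound back into $F_{1,\alpha}=(D-B)+\frac{\alpha}{2}\log I$. This gives $F_{1,\alpha}(u)\geq I-1-\beta\log I$. A one-line algebraic check shows
\[
I-1-\beta\log I=m(\alpha)+\beta\,\varphi(I/\beta)\quad\text{for }\beta>0.
\]
At $\beta=0$, i.e.\ $\alpha=2$, the same bound reads $-1+I$. This yields both cases of the final display.

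For the equality characterization, first suppose equality holds in \cref{eq. stability of Lebedev-Milin inequality}. Then equality must hold in \cref{thm. sharp Sobolev-trace inequality} at the optimal $\alpha=2(1-I(u))$. By that theorem's rigidity statement, $u=T_\phi w_{\alpha/(2-\alpha)}+c$, which is of the form \cref{eq. minimizer in sharp LM inequality}.

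Conversely, take $u=T_\phi w_q+c$. By \cref{lem. conformal invariance of the functional} and translation invariance, both sides of \cref{eq. stability of Lebedev-Milin inequality} are unchanged, so it suffices to treat $w_q$. From \cref{eq. area of wq} and $w_q|_{\partial\mathbb{B}^2}=0$ we get $I(w_q)=1/(1+q)$. \cref{lem. F of wq} with $\alpha=0$ gives $F_{1,0}(w_q)=\log(1+q)-\frac{q}{1+q}$, and this equals $\varphi(1/(1+q))$.

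The only delicate point is making sure the optimizing parameter is admissible: we need $\beta=I(u)>0$, so $\alpha<2$. It is here that the full range $\alpha\in(-\infty,2)$ of \cref{thm. sharp Sobolev-trace inequality} is essential, because $I(u)$ can be arbitrarily large, forcing very negative $\alpha$. No further analysis beyond the sharp inequalities appears to be needed.
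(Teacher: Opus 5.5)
Your proposal is correct and follows the paper's proof essentially step for step: both write $F_{1,\alpha}(u)=F_{1,0}(u)+\frac{\alpha}{2}\log I(u)$ and maximize the resulting lower bound over $\alpha$, with maximum at $\alpha=2-2I(u)$ and value $\varphi(I(u))$. Both then get rigidity from the equality case of \cref{thm. sharp Sobolev-trace inequality} at that optimal $\alpha$, and check the converse via \cref{lem. F of wq}. The only cosmetic difference is at $\alpha=2$, where you substitute directly into $F_{1,2}=F_{1,0}+\log I(u)$ instead of letting $\alpha\to 2$; this is equally valid.
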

    \begin{proof}
        Note that, by \cref{thm. sharp Sobolev-trace inequality},
        \[
        F_{1,0}(u)
        +\frac{\alpha}{2}\log I(u)
        =F_{1,\alpha}(u)
        \geq -\frac{\alpha}{2}
        -\left(1-\frac{\alpha}{2}\right)
        \log\left(1-\frac{\alpha}{2}\right).
        \]
        Then  we have
        \begin{align*}
            \frac{1}{4\pi}
        \int_{\mathbb{B}^2}|\nabla u|^2
        +\frac{1}{2\pi}
        \int_{\partial \mathbb{B}^2}u
        -\log\left(
        \frac{1}{2\pi}
        \int_{\partial\mathbb{B}^2}e^{u}
        \right)
        \geq -\frac{\alpha}{2}
        -\left(1-\frac{\alpha}{2}\right)
        \log\left(1-\frac{\alpha}{2}\right)
        -\frac{\alpha}{2}\log I(u).
        \end{align*}
        Regard the right hand side of the inequality as a function of $\alpha$. Then a direct calculation shows that its unique maximum occurs at $\alpha=2-2I(u)$, and the maximum value is exactly
        \[
        I(u)-1-\log I(u)=\varphi(I(u)).
        \]
        This proves \cref{eq. stability of Lebedev-Milin inequality}. If the equality holds for some $u$, tracing back to the proof, we get 
        \[
        \alpha=2-2I(u),
        \]
        and the infimum of $F_{1,\alpha}$ is also achieved by $u$. Then by \cref{thm. sharp Sobolev-trace inequality}, we know $u$ must be of the form \cref{eq. minimizer in sharp LM inequality}. By \cref{lem. F of wq}, it is straightforward to check that they indeed achieve the equality in \cref{eq. stability of Lebedev-Milin inequality}.

        Finally, for $\alpha\in(-\infty,2)$, by \cref{eq. stability of Lebedev-Milin inequality} we have
        \begin{align*}
            F_{1,\alpha}(u) 
       +\frac{\alpha}{2}
        & +\left(1-\frac{\alpha}{2}\right)
        \log\left(1-\frac{\alpha}{2}\right)\\
        &= F_{1,0}(u)+\frac{\alpha}{2}\log I(u)
        +\frac{\alpha}{2}
        +\left(1-\frac{\alpha}{2}\right)
        \log\left(1-\frac{\alpha}{2}\right)\\
        & \geq I(u)
        -\left(1-\frac{\alpha}{2}\right)
        -\left(1-\frac{\alpha}{2}\right)\log I(u)
        +\left(1-\frac{\alpha}{2}\right)
        \log \left(1-\frac{\alpha}{2}\right) \\
        & =\left(1-\frac{\alpha}{2}\right)
        \varphi\left(
        \frac{I(u)}{1-\frac{\alpha}{2}}\right)
        \end{align*}
        Letting $\alpha$ go to $2$ yields
        \[
        F_{1,2}(u)
        \geq -1+I(u).
        \]
    \end{proof}
    Now we prove \cref{cor. sharp Carleson-Chang ineq}, which is also a sharp improvement of \cref{eq. weak Moser for Dirichlet boundary value}.
    
    \begin{proof}[Proof of \cref{cor. sharp Carleson-Chang ineq}]
        For $u\in H_0^1(\mathbb{B}^2)$, we have
        \[
        I(u)=\frac{\pi}{\int_{\mathbb{B}^2}e^{2u}}.
        \]
        Then the inequality follows directly from \cref{eq. stability of Lebedev-Milin inequality}.
        The rigidity part follows from \cref{cor. stability inequality}.
    \end{proof}

\addcontentsline{toc}{chapter}{\numberline{}Bibliography}
\bibliographystyle{amsalpha}
\bibliography{refOnofri}
\end{document}